\crefname{property}{Property}{Properties}
\crefname{conjecture}{Conjecture}{Conjectures}
\newcommand{\td}{\mathrm{td}}
\newcommand{\tw}{\mathrm{tw}}
\newcommand{\minmax}{\mathrm{minmax}}
\newcommand{\maxmin}{\mathrm{maxmin}}
\newcommand{\fvs}{\mathrm{fvs}}
\newcommand{\oct}{\mathrm{oct}}
\newcommand{\vc}{\mathrm{vc}}
\newcommand{\pw}{\mathrm{pw}}
\newcommand{\aw}{$\alpha\omega\text{esome}$\xspace} 
\newcommand{\s}{s}
\newcommand{\opt}{\mathrm{opt}}
\newcommand{\tin}{\alpha \textnormal{-} \tw}
\newcommand{\pin}{\alpha \textnormal{-} \pw}
\newcommand{\rhocm}{\mu_{\rho,c}}
\newcommand{\twcm}{\mu_{\tw,c}}
\theoremstyle{plain}
\newtheorem{theorem}{Theorem}[section]
\newtheorem{lemma}[theorem]{Lemma}
\newtheorem{proposition}[theorem]{Proposition}
\newtheorem{observation}[theorem]{Observation}
\newtheorem{corollary}[theorem]{Corollary}
\crefname{observation}{observation}{observations}
\Crefname{observation}{Observation}{Observations}
\theoremstyle{definition}
\newtheorem{remark}[theorem]{Remark}
\newtheorem{conjecture}[theorem]{Conjecture}
\newtheorem{question}[theorem]{Question}
\theoremstyle{remark}
\date{}
\begin{document}

\title{Awesome graph parameters}

\author{Kenny Bešter \v{S}torgel\thanks{Faculty of Information Studies in Novo mesto; FAMNIT, University of Primorska; \texttt{kennystorgel.research@gmail.com}}
\and
Clément Dallard\thanks{Department of Informatics, University of Fribourg; \texttt{clement.dallard@unifr.ch}}
\and
Vadim Lozin\thanks{Mathematics Institute, University of Warwick; \texttt{V.Lozin@warwick.ac.uk}}
\and
Martin Milani{\v c}\thanks{FAMNIT and IAM, University of Primorska; \texttt{martin.milanic@upr.si}}
\and
Viktor Zamaraev\thanks{Department of Computer Science, University of Liverpool; \texttt{Viktor.Zamaraev@liverpool.ac.uk}}
}

\maketitle

\begin{abstract}
For a graph $G$, we denote by $\alpha(G)$ the size of a maximum independent set and by $\omega(G)$ the size of a maximum clique in $G$. 
Our paper lies on the edge of two lines of research, related to $\alpha$ and $\omega$, respectively. 
One of them studies $\alpha$-variants of graph parameters, such as $\alpha$-treewidth or $\alpha$-degeneracy. 
The second line deals with graph classes where some parameters are bounded by a function of $\omega(G)$. 
A famous example of this type is the family of $\chi$-bounded classes, where the chromatic number $\chi(G)$ is bounded by a function of $\omega(G)$.

A Ramsey-type argument implies that if the $\alpha$-variant of a graph parameter $\rho$ is bounded by a constant in a class $\mathcal{G}$, then $\rho$ is bounded by a function of $\omega$ in $\mathcal{G}$. If the reverse implication also holds, we say that $\rho$ is \emph{awesome}.
Otherwise, we say that $\rho$ is \emph{awful}.
In the present paper, we identify a number of awesome and awful graph parameters, derive some algorithmic applications of awesomeness, and propose a number of open problems related to these notions.
\end{abstract}

\section{Introduction}

Graph parameters play a central role in structural and algorithmic graph theory, capturing key combinatorial properties of graphs.
Often, a desirable algorithmic or structural property is established by identifying a suitable graph parameter and showing that the target behaviour follows from its boundedness. For example, bounded degeneracy ensures that every subgraph has a low-degree vertex, enabling compact graph representations~\cite{Spi03}; bounded treewidth yields efficient algorithms for problems expressible in monadic second-order logic that are \textsf{NP}-hard in general~\cite{Cou90}.

A potential limitation of this approach is that boundedness of a graph parameter can substantially restrict graph structure: for instance, graphs of bounded treewidth are necessarily sparse, so boundedness of treewidth does not capture dense graphs. This motivates the study of relaxations or generalisations of boundedness of graph parameters that retain useful structural properties or algorithmic tractability while encompassing richer classes.   

The present paper is on the edge of two such lines of research. 
The first concerns \emph{clique-bounded parameters}, where the value of a parameter is bounded as a function of the clique number~$\omega$. A central example is the theory of $\chi$-bounded classes---hereditary classes in which the chromatic number $\chi(G)$ is bounded by a function of $\omega(G)$---which emerged as a generalisation of perfect graphs \cite{Gya87}, and has been the subject of extensive study (see, e.g.,~\cite{MR4174126}).
A more recent example is the family of $(\tw,\omega)$-bounded classes, where the treewidth is bounded by a function of the clique number; such classes are known to have useful algorithmic applications (see~\cite{MR4334541,MR4332111}).

The second line focuses on \emph{independence variants} of parameters.
The independence variant of a parameter $\rho$, denoted $\alpha$-$\rho$, is obtained by relaxing the size constraint in the definition of $\rho$ to one on the independence number.
These variants are motivated in part by algorithmic considerations: for example, bounded $\alpha$-treewidth---a parameter introduced independently by Yolov~\cite{MR3775804} and by Dallard, Milani{\v c}, and {\v S}torgel (under the name \emph{tree-independence number}~\cite{dallard2022firstpaper,dallard2022secondpaper})---still guarantees polynomial-time solvability of the \textsc{Maximum Weight Independent Set} problem (and many others, see~\cite{DFGKM25,MR3775804,LMMORS24,dallard2022firstpaper,MR4640320}), while capturing significantly larger graph classes than bounded treewidth.
Another example of this type is the following independence variant of degeneracy:
the \emph{$\alpha$-degeneracy} of a graph $G$ is the minimum $k$ such that every induced subgraph of $G$ has a vertex whose neighbourhood has independence number at most $k$.
This parameter was studied under the name \emph{inductive independence number} by Ye and Borodin~\cite{MR2916349}, from both structural and algorithmic points of view.

A simple Ramsey-type argument (see \cref{sec:Ramsey}) shows that bounded $\alpha$-$\rho$ always implies that $\rho$ is clique-bounded. 
This observation allows properties implied by clique-boundedness of $\rho$ to be transferred automatically to classes with bounded $\alpha$-$\rho$.  
It is natural to ask whether the converse holds: 

\begin{center}
Does clique-boundedness of $\rho$ always imply boundedness of $\alpha$-$\rho$?
\end{center}
Such an implication is not always straightforward.
For example, motivated by the complexity of the \textsc{Maximum Weight Independent Set} problem, it was conjectured that clique-boundedness of treewidth implies boundedness of $\alpha$-treewidth \cite{dallard2022secondpaper} (see also \cite{MR4334541}); this remained open for some time before being recently refuted by Chudnovsky and Trotignon~\cite{CT24}.
Nevertheless, the question of whether \textsc{Maximum Weight Independent Set} problem admits a polynomial-time algorithm in any hereditary graph class with clique-bounded treewidth, raised by Dallard, Milani{\v c}, and {\v S}torgel in~\cite{MR4334541,dallard2022secondpaper}, remains open.

Motivated by the example of treewidth, in this paper, we formalise the notions of clique-boundedness and independence variants of graph parameters, and investigate when the two notions coincide.
Specifically, a graph parameter~$\rho$ is said to be \emph{awesome} if clique-boundedness of $\rho$ is equivalent to boundedness of $\alpha$-$\rho$, and is \emph{awful} otherwise.
Formal treatment of these notions is given in \cref{sec:framework}.
In \cref{sec:td-pw}, we show that treedepth and pathwidth are awful parameters. In \cref{sec:awesome}, we identify some parameters that are awesome, and derive structural and algorithmic consequences of awesomeness, including an infinite family of graph parameters $\rho$ such that \textsc{Maximum Weight Independent Set} problem admits a polynomial-time algorithm in any hereditary graph class with clique-bounded $\rho$.
We conclude with a discussion of further questions and open problems in \cref{sec:conclusion}. In the next section, we introduce basic notation and main notions.

\section{Preliminaries}\label{sec:prelim}

We denote by $\mathbb{N}$ the set of all positive integers.
All graphs considered in this paper are finite, simple, and undirected.
Let $G=(V,E)$ be a graph. 
We denote by $V(G)$ and $E(G)$ the vertex set and the edge set of $G$, respectively. For a set $S \subseteq V(G)$, we denote by $G[S]$ the subgraph of $G$ induced by $S$.
The \emph{neighbourhood} of a vertex $v$ in $G$, denoted by $N_G(v)$, is the set of vertices adjacent to $v$ in $G$. 
For a set $S \subseteq V(G)$, we denote $N_G(S) = (\bigcup_{v \in S} N_G(v)) \setminus S$. 
When the graph $G$ is clear from the context, we may omit the subscript and simply write $N(v)$ and $N(S)$.
A vertex $v$ is \emph{universal} in $G$ if $N_G(v) = V(G) \setminus \{v\}$. 
The \emph{degree} of $v$ is the cardinality of the set $N_G(v)$. 
The \emph{maximum degree} of a graph $G$, denoted by $\Delta(G)$, is the largest degree of a vertex in $G$. 
We use the standard notation $K_{p,q}$, $K_s$, $P_s$, and $C_s$, for the complete bipartite graph with $p$ and $q$ vertices in the parts, the complete graph, the path, and the cycle on $s$ vertices, respectively. 
A \emph{star} with $q$ leaves is the complete bipartite graph $K_{1,q}$.
Given a graph $H$ and a nonnegative integer $r$, we denote by $rH$ the vertex-disjoint union of $r$ copies of $H$.
An \emph{isomorphism} from a graph $G$ to a graph $G'$ is a bijective function $f:V(G)\to V(G')$ such that two distinct vertices $u,v\in V(G)$ are adjacent in $G$ if and only if their images $f(u)$ and $f(v)$ are adjacent in $G'$.
The graphs $G$ and $G'$ are said to be \emph{isomorphic} (to each other) if there exists an isomorphism from $G$ to $G'$.

A \emph{clique} is a set of pairwise adjacent vertices; an \emph{independent set} is a set of pairwise nonadjacent vertices.
The \emph{clique number} of $G$, denoted by $\omega(G)$, is the maximum cardinality of a clique in $G$;
the \emph{independence number} of $G$, denoted by $\alpha(G)$, is the maximum cardinality of an independent set in $G$.
For a set $S \subseteq V(G)$, the \emph{independence number of $S$} is the independence number $\alpha(G[S])$ of the induced subgraph $G[S]$.
A set $M\subseteq E(G)$ is a \emph{matching} in $G$ if no two edges in $M$ share a common vertex. 
A set $S\subseteq V(G)$ is a \emph{vertex cover} in $G$ if $S$ contains at least one vertex of every edge of $G$. 
The \emph{vertex cover number} of $G$, denoted by $\vc(G)$, is the minimum cardinality of a vertex cover in  $G$.
A set $S\subseteq V(G)$ is a \emph{feedback vertex set} in $G$ if $S$ contains at least one vertex of every cycle of $G$. 
The \emph{feedback vertex set number} of $G$, denoted by $\fvs(G)$, is the minimum cardinality of a feedback vertex set in $G$.
A set $S\subseteq V(G)$ is an \emph{odd cycle transversal} in $G$ if $S$ contains at least one vertex of every odd cycle of $G$. 
The \emph{odd cycle transversal number} of $G$, denoted by $\oct(G)$, is the minimum cardinality of an odd cycle transversal in $G$.

A \emph{tree decomposition} of a graph $G$ is a pair $\mathcal{T} =(T,\beta)$ where $T$ is a tree and $\beta$ is a mapping assigning to each node $x$ of the tree $T$ a subset $\beta(x)$ of the vertex set of $G$ (also called a \emph{bag} of~$\mathcal{T}$) such that the following conditions hold: each vertex of $G$ is contained in some bag, for each edge $e$ of $G$ there exists a bag containing both endpoints of $e$, and for each vertex $v$ of $G$, the set of nodes $x$ of $T$ such that $v\in \beta(x)$ induces a connected subgraph of $T$.
The \emph{width} of a tree decomposition $(T,\beta)$ is defined as the maximum value of $|\beta(x)|$ over all nodes $x$ of the tree $T$.\footnote{We deviate from the standard way of defining the width by subtracting $1$ from its value. This way, the width simply corresponds to the cardinality of the largest bag. 
This small difference, which does not affect boundedness, simplifies the generalization to hyperparameters (see p.~\pageref{treewidth-hyperparameterisation}).}
The \emph{treewidth} of a graph $G$, denoted $\tw(G)$, is defined as the minimum width of a tree decomposition of $G$.
A \emph{path decomposition} of a graph $G$ is a tree decomposition $(P,\beta)$ of $G$ such that $P$ is a path.
The \emph{pathwidth} of a graph $G$, denoted $\pw(G)$, is defined as the minimum width of a path decomposition of $G$.
A \emph{rooted tree} is pair $(T,r)$ such that $T$ is a tree and $r$ is a vertex of $T$ called the \emph{root}.
A \emph{rooted forest} is a disjoint union of rooted trees; in particular, a rooted forest may have more than one root.
A \emph{leaf} of a rooted forest $F$ is a vertex of out-degree $0$ in the digraph obtained from $F$ by orienting the edges away from the roots.
Given a rooted forest $F$, the \emph{depth} of $F$ is the maximum number of vertices on a root-to-leaf path.
The \emph{transitive closure} of a rooted forest $F$ is the graph $G$ obtained from $F$ by adding edges to it so that each root-to-leaf path in $F$ becomes a clique in $G$ (and adding no other edges).
A \emph{treedepth decomposition} of a graph $G$ is a rooted forest $F$ such that $G$ is a subgraph of the transitive closure of $F$.
The \emph{treedepth} of a graph $G$, denoted $\td(G)$, is defined as the minimum depth of a treedepth decomposition of $G$.

\section{The framework}\label{sec:framework}

In this section, we formalize the key concepts used in the paper. In \cref{sec:gp-gc-cp}, we recall the formal definitions of graph classes and graph parameters, and discuss how the boundedness of graph parameters can be used to define families of classes (i.e., class properties). We then consider more general ways of defining class properties, where one parameter is bounded as a function of another parameter, rather than by a constant. Prominent examples include $\chi$-bounded and $(\tw, \omega)$-bounded classes.  
To treat these approaches to defining class properties uniformly, we introduce in \cref{sec:hyperparameters} the notion of a \emph{hyperparameter}. 
In \cref{sec:hyperparameterisations}, we demonstrate how various classical graph parameters, as well as their independence variants (e.g., the tree-independence number), can be expressed as hyperparameters.  
In \cref{sec:Ramsey}, we observe that for any cardinality based hyperparameter $\rho$, boundedness of its independence variant in a class of graphs implies that $\rho$ is clique-bounded in that class. 
Finally, in \cref{sec:antiRamsey}, we formally state the central question of our study: for which hyperparameters does the converse implication hold?

\subsection{Graph parameters, graph classes, and class properties}\label{sec:gp-gc-cp}

\medskip
\noindent{\bf Graph classes and graph parameters.} 
A \emph{graph class} is a collection of graphs closed under isomorphism.
A graph class is \emph{hereditary} if it is closed under vertex deletion; in other words, for every graph in the class, all of its induced subgraphs also belong to the class. Given a graph class $\mathcal{G}$, the \emph{hereditary closure} of $\mathcal{G}$ is the class of all induced subgraphs of graphs in $\mathcal{G}$.
For a set $\mathcal{F}$ of graphs, we say that a graph $G$ is \emph{$\mathcal{F}$-free} if no graph in $\mathcal{F}$ is isomorphic to an induced subgraph of $G$. 
If $\mathcal{F}$ consists of a single graph $F$, we write $F$-free instead of $\{F\}$-free.  
It is a folklore fact that a graph class $\mathcal{G}$ is hereditary if and only if there exists a set $\mathcal{F}$ of graphs such that a graph $G$ belongs to $\mathcal{G}$ if and only if $G$ is $\mathcal{F}$-free.

A \emph{graph parameter} is a function mapping graphs to the set of integers that does not distinguish between isomorphic graphs.
A graph parameter is \emph{monotone under induced subgraphs} if, for every graph $G$ and every induced subgraph $H$ of $G$, the value of the parameter on $H$ is at most its value on $G$.
Many questions in graph theory can be formulated in terms of hereditary graph classes and graph parameters.
In particular, structural graph theory aims at understanding which substructures, when present in a graph, are ``responsible'' for a certain parameter to be large.
For example:
\begin{itemize}
\item Ramsey's theorem (see~\cite{MR1576401}) states that a necessary and sufficient condition for the number of vertices of a graph to be large is the presence of either a large clique or a large independent set.
\item The Grid-Minor Theorem of Robertson and Seymour (see~\cite{MR0854606}) states that a necessary and sufficient condition for the treewidth of a graph to be large is the presence of a large grid as a minor.
\item The Strong Perfect Graph Theorem due to Chudnovsky, Robertson, Seymour, and Thomas (see~\cite{MR2233847}) can be stated as follows.
A necessary and sufficient condition for the maximum difference, over all induced subgraphs, between the chromatic number and the clique number to be strictly positive is the presence of an induced odd cycle of length at least five in the graph or its complement.

\item More generally, understanding necessary and sufficient conditions under which large cliques are the only structures responsible for large chromatic number is the main research goal of the theory of $\chi$-boundedness (see~\cite{MR4174126} for a survey).
\end{itemize}

\medskip
\noindent{\bf From graph parameters to class properties.} 
Given a graph parameter $\rho$, we say that a graph class $\mathcal{G}$ has \emph{bounded $\rho$} if there exists an integer $c$ such that for every graph $G \in \mathcal{G}$, every induced subgraph $G'$ of $G$ satisfies $\rho(G')\le c$.
Note that a graph class $\mathcal{G}$ has bounded $\rho$ if and only if its hereditary closure has bounded $\rho$.
Our definition differs slightly from the more common definition stating that a graph class $\mathcal{G}$ has bounded $\rho$ if there exists a constant $c$ such that $\rho(G)\le c$ for all graphs $G$ in $\mathcal{G}$.
However, the two definitions coincide for hereditary graph classes, as well as for any graph parameter $\rho$ that is monotone under induced subgraphs.

A \emph{class property} is a family of graph classes. 
Given a class property $\mathcal{P}$ and a graph class $\mathcal{G}$, we say that $\mathcal{G}$ \emph{satisfies} $\mathcal{P}$ if $\mathcal{G}\in \mathcal{P}$.
Any graph parameter $\rho$ naturally leads to a class property $\mathcal{B}_\rho$, consisting of precisely those graph classes $\mathcal{G}$ that have bounded~$\rho$.
The identification of substructures ``responsible'' for a parameter $\rho$ to be large leads to a characterization of the corresponding class property $\mathcal{B}_\rho$.
For example:
\begin{itemize}
\item The Grid-Minor Theorem states that a minor-closed graph class has bounded treewidth if and only if it excludes some grid.
\item Ramsey's theorem states that a hereditary graph class has bounded order if and only if it excludes some complete graph and some edgeless graph.
\end{itemize}

\medskip
\noindent{\bf Generalizations.} 
Further class properties can be obtained by considering two graph parameters at once, by requiring that one of the parameters is bounded by a function of the other one.
This includes the following class properties:
\begin{itemize}
    \item A hereditary graph class $\mathcal{G}$ is \emph{$\chi$-bounded} if in the class $\mathcal{G}$ large cliques are the only structures responsible for large chromatic number, that is, there exists a function $f$ such that every graph $G$ in $\mathcal{G}$ satisfies $\chi(G)\le f(\omega(G))$, where $\chi(G)$ and $\omega(G)$ denote the chromatic number and the clique number of $G$, respectively (see~\cite{MR0951359}).
\item A hereditary graph class $\mathcal{G}$ is \emph{$(\tw,\omega)$-bounded} if in the class $\mathcal{G}$ large cliques are the only structures responsible for large treewidth (see, e.g.,~\cite{MR4334541,MR4332111}).
\end{itemize}
The above two examples are special cases of the following framework.
Given two graph parameters $\rho$ and $\sigma$, we say that a graph class $\mathcal{G}$ is \emph{$(\rho,\sigma)$-bounded} (or \emph{has $\sigma$-bounded $\rho$}) if there exists a nondecreasing function $f$ such that, for every graph $G$ in the class $\mathcal{G}$, every induced subgraph $G'$ of $G$ satisfies $\rho(G')\le f(\sigma(G'))$.

In general, there is no assumption on $\mathcal{G}$ to be hereditary, and this is the reason why the condition is imposed on all induced subgraphs of graphs in $\mathcal{G}$. 
However, in our paper we focus on hereditary graph classes, in which case it of course suffices to require that $\rho(G)\le f(\sigma(G))$ for all graphs $G$ in $\mathcal{G}$.

In the above terminology, $\chi$-bounded and $(\tw,\omega)$-bounded graph classes are said to have \emph{$\omega$-bounded chromatic number} and \emph{$\omega$-bounded treewidth}, respectively.
The latter class property is a subfamily of the class property of $\omega$-bounded expansion studied by Jiang, Ne\v{s}et\v{r}il, and Ossona de Mendez~\cite{MR4513822}.
Several other class properties captured by the above framework were studied in the literature, including $\chi$-bounded colouring number (see the book by Jensen and Toft~\cite{MR1304254}), $\chi$-bounded minimum degree (see~\cite{MR2811077}), degeneracy-bounded treewidth (by Gy\'{a}rf\'{a}s and Zaker~\cite{MR3794363}), and $\omega$-bounded pathwidth (by Hermelin, Mestre, and Rawitz~\cite{MR3215457}).

\medskip
\noindent{\bf Clique-bounded class properties.} 
As exemplified by the properties of $\chi$-boundedness, $\omega$-bounded treewidth, and $\omega$-bounded pathwidth, an important special case of the above framework for defining properties of graph classes is the case when the parameter $\sigma$ is the clique number, $\omega$.
In this case, given a graph parameter $\rho$, we may also say that a graph class that has $\omega$-bounded $\rho$ has \emph{clique-bounded $\rho$} and denote by $\mathcal{C}_\rho$ the corresponding class property. 
Thus, we speak of graph classes with clique-bounded pathwidth, clique-bounded treewidth, etc.

Note that for any graph parameter $\rho$, a graph class $\mathcal{G}$ belongs to the class property $\mathcal{C}_\rho$ if and only if there exists a function $f$, called a \emph{binding function}, such that for every graph $G$ in the class $\mathcal{G}$, every induced subgraph $G'$ of $G$ satisfies $\rho(G')\le f(\omega(G'))$.
Furthermore, any hereditary graph class $\mathcal{G}$ satisfying the class property $\mathcal{B}_\rho$ (bounded $\rho$) also satisfies the property $\mathcal{C}_\rho$ (clique-bounded $\rho$), which can be observed by considering the constant function $f(p) = c$, for all $p\in \mathbb{N}$, where $c$ is any integer such that $\rho(G)\le c$ for all graphs $G$ in $\mathcal{G}$.

\subsection{Hyperparameters}\label{sec:hyperparameters}

An \emph{annotated graph} is a tuple $(G,X)$ such that $G$ is a graph and $X\subseteq V(G)$.
An \emph{annotated graph parameter} is a function $\lambda$ that assigns to every annotated graph $(G,X)$ an integer such that for any graph $G'$ isomorphic to $G$ and any isomorphism $f$ from $G$ to $G'$, it holds that $\lambda(G,X) = \lambda(G',f(X))$, where $f(X) = \{f(x)\colon x\in X\}$.
Two annotated graph parameters that play an important role in the rest of the paper are: (i) the cardinality function, defined by $\mathtt{card}(G,X) \coloneqq |X|$ for all annotated graphs $(G,X)$, and (ii)~the independence number of the subgraph of $G$ induced by $X$, defined by $\alpha(G,X) \coloneqq \alpha(G[X])$ for all annotated graphs $(G,X)$.

Given a graph $G$, a \emph{hypergraph over $G$} is a nonempty set of subsets of $V(G)$.
Given a finite nonempty family $\mathcal{F}$ of hypergraphs over a graph $G$ and an annotated graph parameter $\lambda$, we define the \emph{$\lambda$-min-max value} of $(G,\mathcal{F})$ to be the quantity \[\lambda\text{-}\minmax(G,\mathcal{F})= \min_{H\in \mathcal{F}}\max_{X\in H}\lambda(G,X)\,.\]
The \emph{$\lambda$-max-min value} of $(G,\mathcal{F})$, defined analogously, is
\[\lambda\text{-}\maxmin(G,\mathcal{F})= \max_{H\in \mathcal{F}}\min_{X\in H}\lambda(G,X)\,.\]
A \emph{hypermapping} is a mapping $\mathcal{F}$ that associates to each graph $G$ a finite nonempty family $\mathcal{F}_G$ of hypergraphs over $G$ such that, for any graph $G'$ isomorphic to $G$ and any isomorphism $f$ from $G$ to $G'$, we have $\mathcal{F}_{G'} = \{\varphi(H)\colon H\in \mathcal{F}_G\}$ where $\varphi(H) = \{f(X)\colon X\in H\}$ for all $H\in \mathcal{F}_G$.
A \emph{graph hyperparameter}, or simply a \emph{hyperparameter}, is a tuple $\rho = (\lambda, \mathrm{opt}, \mathcal{F})$ such that $\lambda$ is an annotated graph parameter, \hbox{$\mathrm{opt}\in \{\mathrm{minmax},\mathrm{maxmin}\}$}, and $\mathcal{F}$ is a hypermapping.

As we show next, to every hyperparameter one can associate a graph parameter.

\begin{proposition}\label{hyperparameter-to-parameter}
Let $\rho = (\lambda, \mathrm{opt}, \mathcal{F})$ be a hyperparameter.
Given a graph $G$, we denote by $\rho(G)$ the value of $\lambda\text{-}\mathrm{opt}(G,\mathcal{F}_G)$.
Then, $\rho$ is a graph parameter, that is, if $G$ and $G'$ are isomorphic graphs, then $\rho(G) = \rho(G')$. 
\end{proposition}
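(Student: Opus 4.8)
The plan is to fix a graph $G'$ isomorphic to $G$, fix an isomorphism $f$ from $G$ to $G'$, and show directly that $\lambda\text{-}\mathrm{opt}(G,\mathcal{F}_G)=\lambda\text{-}\mathrm{opt}(G',\mathcal{F}_{G'})$ by tracking, layer by layer, how $f$ transports the three ingredients in the definition of $\rho$: the annotated parameter $\lambda$, the inner optimisation over a single hypergraph, and the outer optimisation over the family $\mathcal{F}_G$. Throughout, I would write $\varphi(H)=\{f(X)\colon X\in H\}$ for a hypergraph $H$ over $G$, exactly as in the definition of a hypermapping. Since every hypergraph over a graph is by definition a nonempty set of subsets, and each $\mathcal{F}_G$ is a finite nonempty family, all the minima and maxima appearing below are taken over finite nonempty sets of integers and hence are well defined; I would note this once at the outset.

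First I would record two elementary bijectivity facts. Because $f$ is a bijection from $V(G)$ to $V(G')$, for every hypergraph $H$ over $G$ the assignment $X\mapsto f(X)$ is a bijection from $H$ onto $\varphi(H)$. And by the defining property of a hypermapping, $\mathcal{F}_{G'}=\{\varphi(H)\colon H\in\mathcal{F}_G\}$, so $H\mapsto\varphi(H)$ maps $\mathcal{F}_G$ \emph{onto} $\mathcal{F}_{G'}$ — possibly not injectively, which will not matter. With these in hand, the core computation is short. Fix $H\in\mathcal{F}_G$. Since $\lambda$ is an annotated graph parameter and $f$ is an isomorphism, $\lambda(G,X)=\lambda(G',f(X))$ for every $X\subseteq V(G)$; combining this identity with the bijection $X\mapsto f(X)$ from $H$ onto $\varphi(H)$ yields $\max_{X\in H}\lambda(G,X)=\max_{Y\in\varphi(H)}\lambda(G',Y)$, and likewise with $\min$ in place of $\max$. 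Now take the outer optimum over $H\in\mathcal{F}_G$. The quantity $\max_{Y\in\varphi(H)}\lambda(G',Y)$ depends on $H$ only through $\varphi(H)$, so as $H$ ranges over $\mathcal{F}_G$ this quantity ranges over exactly the set $\{\max_{Y\in H'}\lambda(G',Y)\colon H'\in\mathcal{F}_{G'}\}$ (here the surjectivity of $H\mapsto\varphi(H)$ is what is used), whence the two minima agree; in the case $\mathrm{opt}=\mathrm{maxmin}$ one compares instead the sets $\{\min_{Y\in H'}\lambda(G',Y)\colon H'\in\mathcal{F}_{G'}\}$ and takes maxima. In either case this gives $\lambda\text{-}\mathrm{opt}(G,\mathcal{F}_G)=\lambda\text{-}\mathrm{opt}(G',\mathcal{F}_{G'})$, i.e.\ $\rho(G)=\rho(G')$.

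I do not expect a genuine obstacle here; the argument is a bookkeeping exercise in pushing an isomorphism through nested $\min$/$\max$. The only points deserving a little care are: (a) verifying that $f(X)=\{f(x)\colon x\in X\}$ is precisely the notion of image for which the invariance $\lambda(G,X)=\lambda(G',f(X))$ holds verbatim (so that no separate argument is needed at the $\lambda$-level), and (b) being content with surjectivity rather than bijectivity of $H\mapsto\varphi(H)$, which suffices because $\min$ and $\max$ of a collection of values depend only on the underlying set of values, not on multiplicities or on an indexing. One could alternatively obtain the two inequalities symmetrically by also invoking the isomorphism $f^{-1}$ from $G'$ to $G$, but the surjectivity remark makes this detour unnecessary.
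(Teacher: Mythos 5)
Your proof is correct and follows essentially the same route as the paper's: transport each inner $\min$/$\max$ along the bijection $X\mapsto f(X)$ using the invariance $\lambda(G,X)=\lambda(G',f(X))$, then use $\mathcal{F}_{G'}=\{\varphi(H)\colon H\in\mathcal{F}_G\}$ to equate the outer optima. Your explicit remarks on surjectivity versus bijectivity of $H\mapsto\varphi(H)$ and on well-definedness of the optima are slightly more careful than the paper's write-up but do not change the argument.
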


\begin{proof}
Consider first the case when $\mathrm{opt}= \mathrm{minmax}$.
Let $G$ and $G'$ be isomorphic.
Fix an isomorphism $f:G\to G'$.
Note that, since $\lambda$ is an annotated graph parameter, for every set $X\subseteq V(G)$, we have $\lambda(G,X) = \lambda(G',f(X))$.
Recall also that $\mathcal{F}_{G'} = \{\varphi(H)\colon H\in  \mathcal{F}_G\}$ where $\varphi(H) = \{f(X)\colon X\in H\}$ for all $H\in \mathcal{F}_G$.
The definition of $\varphi$ implies that for every hypergraph $H\in \mathcal{F}_G$, we have that \[\max_{X\in H}\lambda(G,X) = \max_{X\in H}\lambda(G',f(X)) = \max_{X'\in \varphi(H)}\lambda(G',X')\,.\]
Consequently,
\begin{flalign*}
    \rho(G) &= \lambda\text{-}\minmax(G,\mathcal{F}_G) = \min_{H\in \mathcal{F}_G}\max_{X\in H}\lambda(G,X) = \min_{H\in \mathcal{F}_G}\max_{X'\in \varphi(H)}\lambda(G',X')\\
&= \min_{H'\in \mathcal{F}_{G'}}\max_{X'\in H'}\lambda(G',X') = \lambda\text{-}\minmax(G',\mathcal{F}_{G'}) = \rho(G')\,,
\end{flalign*}
where the fourth equality follows from the fact that $\mathcal{F}_{G'} = \{\varphi(H)\colon H\in  \mathcal{F}_G\}$.

The case when $\mathrm{opt}= \mathrm{maxmin}$ is similar.
\end{proof}

We say that a hyperparameter $\rho = (\lambda,\mathrm{opt}, \mathcal{F})$ is a \emph{hyperparameterisation} of the corresponding graph parameter $\rho(G)=\lambda\text{-}\mathrm{opt}(G,\mathcal{F}_G)$, as given by \cref{hyperparameter-to-parameter}.
Let us observe that every graph parameter $\rho$ mapping graphs to the set of integers admits multiple hyperparameterisations, as can be seen by considering any tuple $(\lambda, \opt, \mathcal{F})$ such that $\lambda$ is the annotated graph parameter that maps every annotated graph $(G,X)$ to $\rho(G)$, $\opt\in \{\minmax,\maxmin\}$, and $\mathcal{F}$ is \textsl{any} hypermapping.
However, such hyperparameterisations are artificial and, hence, uninteresting.
We are interested in hyperparameterisations whose definition reflects the definition of a certain graph parameter $\rho$ in a more structural way, in the sense that the hypermapping corresponds to the set of feasible solutions to the graph optimisation problem defining $\rho$.
Several examples are given in \cref {sec:hyperparameterisations}.

In this paper, we focus on two particular kinds of hyperparameterisations.
We say that a hyperparameter $(\lambda, \opt, \mathcal{F})$ is:
\begin{itemize}
    \item \emph{cardinality based} (or simply \emph{basic}) if $\lambda$ is the cardinality function, that is, for all annotated graphs $(G,X)$, we have $\lambda(G,X) = \mathtt{card}(G,X) \coloneqq |X|$,
    
    \item \emph{independence based} if $\lambda$ is the independence number of the subgraph of $G$ induced by $X$, that is, for all annotated graphs $(G,X)$, we have $\lambda(G,X) = \alpha(G,X) \coloneqq \alpha(G[X])$.
\end{itemize}

Given a hyperparameter $\rho = (\lambda,\opt, \mathcal{F})$ and an annotated graph parameter $\eta$, we define the \emph{$\eta$-variant} of $\rho$, denoted by $\eta\text{-}\rho$, as the hyperparameter $(\eta, \opt, \mathcal{F})$.
Throughout the paper, we mostly consider basic hyperparameters and their $\alpha$-variants.
Given a basic hyperparameter $\rho$, we also refer to its $\alpha$-variant as the \emph{independence variant} of $\rho$.

\medskip
\cref{hyperparameter-to-parameter} allows us to interpret every hyperparameter $\rho$ as some graph parameter, and, hence, naturally leads to the notions of graph classes that have \emph{bounded $\rho$}, \emph{bounded $\alpha\text{-}\rho$}, or have \emph{clique-bounded~$\rho$}.

\subsection{Canonical hyperparameterisations and variants}\label{sec:hyperparameterisations}

Many classical graph parameters can be expressed as basic hyperparameters.
In particular, we introduce the following hyperparameterisations of several well-known graph parameters.
For simplicity, we refer to these hyperparameterisations as \emph{canonical} hyperparameterisations of the corresponding parameters.
In all these examples, the corresponding hyperparameter $\rho = (\lambda, \opt, \mathcal{F})$ is basic, that is, $\lambda = \mathtt{card}$.

\medskip
\noindent For \textbf{order}, $n(G) = |V(G)|$, we can take a trivial hyperparameterisation: $\opt\in \{\minmax,\maxmin\}$ and $\mathcal{F}_G = \{\{V(G)\}\}$.

\medskip
\noindent For \textbf{maximum degree}, we take $\opt = \minmax$ and $\mathcal{F}_G = \big\{\{N_G(v) \colon v\in V(G)\}\big\}$.

\medskip
\noindent For \textbf{independence number}, we take\footnote{Alternatively, one could take $\opt = \maxmin$ and $\mathcal{F}_G = \Big\{\big\{\{ X\}\big\} \colon X \text{ is an independent set in }G\Big\}$.}
    \[
    \opt = \minmax \text{ and } \mathcal{F}_G = \big\{\{X \colon X \text{ is an independent set in $G$}\}\big\}.
    \]
    That is, with $\opt$ and $\mathcal{F}_G$ as above, all graphs $G$ satisfy $\alpha(G) = \mathtt{card}\text{-}\opt(G,\mathcal{F}_G)$.
    
\medskip
\noindent For \textbf{vertex cover number}, we take\footnote{Alternatively, one could take $\opt = \minmax$ and $\mathcal{F}_G = \Big\{\big\{\{ X\}\big\} \colon X \text{ is a vertex cover in }G\Big\}$.}
    \[
    \opt = \maxmin \text{ and } \mathcal{F}_G = \big\{\{X \colon X \text{ is a vertex cover in $G$}\}\big\}.
    \]
\medskip
\noindent For \textbf{feedback vertex number}, we take \[
    \opt = \maxmin \text{ and } \mathcal{F}_G = \big\{\{X \colon X \text{ is a feedback vertex set in $G$}\}\big\}\,.\]

\begin{remark}\label{hyperpar-optimisation}
As indicated by these examples, whenever $\rho$ is a graph minimisation parameter defined as the smallest cardinality of a vertex subset of $G$ satisfying a certain property $P$, one can define the canonical hyperparameterisation of $\rho$ as follows:
\[
    \opt = \maxmin \text{ and } \mathcal{F}_G = \big\{\{X\subseteq V(G) \colon X \text{ satisfies $P$ in $G$}\}\big\}\,,\]
and similarly for maximisation parameters, replacing $\maxmin$ with $\minmax$.
\end{remark}

\medskip
\noindent For \textbf{degeneracy}, we take $\opt = \maxmin$ and $\mathcal{F}_G = \{\{ N_{G'}(v) \colon v \in V(G')\}\colon  G'$ is a non-null induced subgraph of $G\}$.

\medskip
\noindent For \textbf{treewidth},\label{treewidth-hyperparameterisation}
 we take $\opt = \minmax$ and $\mathcal{F}_G$ to be the family of all tree decompositions of $G$ considered as hypergraphs whose hyperedges are the bags of the tree decomposition.

\medskip
\noindent For \textbf{pathwidth}, we take, similarly as for treewidth, $\opt = \minmax$ and $\mathcal{F}_G$ to be the family of all path decompositions of $G$.

\medskip
\noindent For \textbf{treedepth}, we take $\opt = \minmax$ and $\mathcal{F}_G$ the set of all treedepth decompositions considered as hypergraphs whose hyperedges correspond to the vertex sets of each root-to-leaf path in the decomposition.

\medskip
\noindent 
For \textbf{chromatic number}, we take $\opt = \minmax$ and $\mathcal{F}_G$ to be the family of hypergraphs, one for each proper colouring of $G$, whose hyperedges are all the sets of vertices of $G$ with pairwise different colours.

\medskip
In the rest of the paper, unless specified otherwise, we identify the above graph parameters with their canonical hyperparameterisations.

Recall that each graph parameter that admits a hyperparameterisation $\rho = (\mathtt{card},\opt, \mathcal{F})$ also admits its independence variant, also called the \emph{$\alpha$-variant} of $\rho$.
For instance:
\begin{itemize}
    \item If $\rho$ is either the order or the independence number, then its $\alpha$-variant is the independence number.
    
    \item If $\rho$ is the maximum degree, then its $\alpha$-variant is the maximum number of leaves of an induced star in a graph $G$.
    This parameter, recently studied under the name \emph{local independence number} by Gollin, Hatzel, and Wiederrecht~\cite{gollin2025graphssimplestructuremaximal}, is a lower bound on the Cartesian dimension of a graph (see~\cite{DBLP:conf/mfcs/MilanicMM17}).

    \item If $\rho$ is the degeneracy, then its $\alpha$-variant is the \emph{inductive independence number}, studied by Ye and Borodin~\cite{MR2916349}.

    \item If $\rho$ is the treewidth (plus one), then its $\alpha$-variant is the \emph{tree-independence number} (also called \emph{$\alpha$-treewidth}), introduced independently by Yolov~\cite{MR3775804} and by Dallard, Milani{\v c}, and {\v S}torgel~\cite{dallard2022firstpaper,dallard2022secondpaper}, and studied  in~\cite{DBLP:conf/soda/AhnGHK25,DFGKM25,LMMORS24,chudnovsky2025treeindependencenumbervithetas,chudnovsky2025treewidthcliqueboundednesspolylogarithmictreeindependence,CT24,gollin2025graphssimplestructuremaximal,abrishami2023tree,chudnovsky2025treeindependencenumberv,DKKMMSW2024,HMV25,chudnovsky2025treeindependencenumberivsoda,chudnovsky2024treeindependencenumberiii,MR4955546}
   (see also Adler~\cite{adler2006width} for a more general context).

   \item If $\rho$ is the pathwidth (plus one), then its $\alpha$-variant is the \emph{path-independence number}, considered recently by Beisegel et al.~\cite{beisegel_et_al:LIPIcs.SWAT.2024.7} in relation with the simultaneous interval number, a new width parameter that measures the similarity to interval graphs.
\end{itemize}

Although not studied in this paper, other variants of basic hyperparameters have been considered in the literature, especially in relation to treewidth and pathwidth. 
This includes:
\begin{itemize}
    \item the chromatic variants of treewidth and pathwidth, corresponding to the \emph{tree-chromatic number} and \emph{path-chromatic number}, respectively, introduced by Seymour~\cite{MR3425243} and studied in~\cite{huynhkim17,BARRERACRUZ2019206,Huynh2021};

    \item the clique cover variant of treewidth (see~\cite{doi:10.1137/20M1320870,abrishami2023tree}; see also~\cite{zbMATH07567466,zbMATH07365274} for a related variant where the annotated graph parameter $\lambda$ maps each annotated graph $(G,X)$ to the size of the smallest subset $Y \subseteq X$ such that $X\setminus Y$ is a clique in $G$);
    
\item the variant of treewidth defined by replacing the cardinality in the corresponding hyperparameter with the annotated graph parameter $\lambda$ that maps each annotated graph $(G,X)$ to the maximum cardinality of an induced matching in $G$ every edge of which has an endpoint in $X$; this hyperparameter corresponds to the \emph{induced matching treewidth}, introduced  by Yolov~\cite{MR3775804} (under the name \emph{minor-matching hypertreewidth}) and studied by Lima et al.~\cite{LMMORS24}, Abrishami et al.~\cite{abrishami2024excludingcliquebicliquegraphs}, and Bodlaender et al.~\cite{bodlaender2025findingsparseinducedsubgraphs};

\item further variants of treewidth were studied in the context of hypergraphs by Marx~\cite{MR3144912}.
\end{itemize}

Let us mention an example of how some known inequalities between graph parameters can be easily lifted to analogous inequalities between hyperparameters.
Treewidth, pathwidth, and treedepth of a graph $G$ are related by a following chain of inequalities: $\tw(G)\le \pw(G)\le \td(G)$. (The former inequality follows directly from the definitions; for the latter one, see, e.g.,~\cite{MR1317666,MR2920058}.)
Although the proof below does not involve any new ideas, we include it for the sake of completeness and since it is adapted to the notations of this paper.
(Under some mild technical conditions on $\lambda$, the chain of inequalities can also be extended to include the $\lambda$-variant of vertex cover number; see \cref{prop:lambda-td-vc}.)

\begin{proposition}\label{prop:eta}
For every annotated graph parameter $\lambda$ and every graph $G$, we have
\[\lambda\text{-}\tw(G)\le \lambda\text{-}\pw(G)\le \lambda\text{-}\td(G)\,.\]
\end{proposition}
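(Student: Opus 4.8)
The plan is to unwind the canonical hyperparameterisations of $\tw$, $\pw$, and $\td$ and to reduce both inequalities to elementary manipulations of nested minima and maxima. Write $\mathcal{F}^{\tw}$, $\mathcal{F}^{\pw}$, $\mathcal{F}^{\td}$ for the hypermappings appearing in these hyperparameterisations, so that $\lambda\text{-}\tw(G)=\min_{H\in\mathcal{F}^{\tw}_G}\max_{X\in H}\lambda(G,X)$ and analogously for $\pw$ and $\td$; recall that $\mathcal{F}^{\tw}_G$ (resp.\ $\mathcal{F}^{\pw}_G$) consists of the tree (resp.\ path) decompositions of $G$ viewed as the hypergraphs of their bags, whereas $\mathcal{F}^{\td}_G$ consists of the treedepth decompositions of $G$ viewed as the hypergraphs whose hyperedges are the vertex sets of the root-to-leaf paths. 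In all three cases $\opt=\minmax$.

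For the first inequality I would simply note that every path decomposition of $G$ is, by definition, a tree decomposition of $G$, so that $\mathcal{F}^{\pw}_G\subseteq\mathcal{F}^{\tw}_G$ as families of hypergraphs over $G$; since a minimum taken over a larger family is no larger, this gives
\[
\lambda\text{-}\tw(G)=\min_{H\in\mathcal{F}^{\tw}_G}\max_{X\in H}\lambda(G,X)\ \le\ \min_{H\in\mathcal{F}^{\pw}_G}\max_{X\in H}\lambda(G,X)=\lambda\text{-}\pw(G)\,.
\]

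For the second inequality the two families are not nested, so instead I would show that for every treedepth decomposition $F$ of $G$ there is a path decomposition $P$ of $G$ each of whose bags is a hyperedge of $F$, that is, $\{X : X\in P\}\subseteq\{X : X\in F\}$. Granting this, $\max_{X\in P}\lambda(G,X)\le\max_{X\in F}\lambda(G,X)$, and since $P\in\mathcal{F}^{\pw}_G$ we obtain $\lambda\text{-}\pw(G)\le\max_{X\in F}\lambda(G,X)$; taking the minimum of the right-hand side over all $F\in\mathcal{F}^{\td}_G$ yields $\lambda\text{-}\pw(G)\le\lambda\text{-}\td(G)$. To build $P$ from a rooted forest $F$ that is a treedepth decomposition of $G$: order the roots of $F$ arbitrarily, run a depth-first search from each root in turn to obtain a linear order $\ell_1,\dots,\ell_m$ of the leaves of $F$, and let $P$ be the sequence of bags $P_{\ell_1},\dots,P_{\ell_m}$, where $P_\ell$ denotes the vertex set of the root-to-leaf path of $F$ ending at $\ell$. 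Each $P_\ell$ is a hyperedge of $F$ by construction, so the required containment holds (and, incidentally, each bag has at most as many vertices as the depth of $F$). One then verifies that $P$ is a genuine path decomposition of $G$: every vertex of $F$ lies on some root-to-leaf path, hence in some bag; every edge $uv\in E(G)$ joins two vertices that are in ancestor--descendant relation in $F$, which therefore lie together on the root-to-leaf path through the deeper of the two, hence in a common bag; and for each vertex $v$, the bags containing $v$ are exactly the $P_\ell$ with $\ell$ in the subtree of $F$ rooted at $v$, and these form a contiguous block in the depth-first leaf order, giving the interval property.

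The only real content is this last construction, which is essentially the classical proof that $\pw(G)\le\td(G)$, and within it the point requiring some care is the verification of the interval property. What makes the argument transfer verbatim to an arbitrary annotated graph parameter $\lambda$ — with no monotonicity or other assumption on $\lambda$ — is precisely that $P$ is assembled entirely from bags that are already hyperedges of the treedepth hypergraph, so that $\max_{X\in P}\lambda(G,X)\le\max_{X\in F}\lambda(G,X)$ needs no comparison beyond the hyperedge containment. (The degenerate case $V(G)=\emptyset$ is trivial and may be excluded.)
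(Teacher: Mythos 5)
Your proof is correct and follows essentially the same route as the paper's: the first inequality via the containment $\mathcal{F}^{\pw}_G\subseteq\mathcal{F}^{\tw}_G$, and the second by converting a treedepth decomposition into a path decomposition whose bags are the root-to-leaf path vertex sets ordered by a DFS traversal of the leaves. The only difference is cosmetic --- you spell out the verification of the path-decomposition axioms (which the paper omits) and phrase the key step as a containment of hyperedge sets rather than as the inclusion $\mathcal{F}^{\td}_G\subseteq\mathcal{F}^{\pw}_G$ of hypergraph families, but since your bags are exactly the root-to-leaf path sets, the two formulations coincide.
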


\begin{proof}
Let $G$ be a graph.
By the definition of $\lambda\text{-}\tw$, we have 
\[\lambda\text{-}\tw(G) = \lambda\text{-}\minmax(G,\mathcal{F}_G^{\tw})= \min_{H\in \mathcal{F}_G^{\tw}}\max_{X\in H}\lambda(G,X)\,,\]
where $\mathcal{F}_G^{\tw}$ is the family of all hypergraphs over $G$ whose hyperedges are the bags of some tree decomposition of $G$.
Similarly, 
\[\lambda\text{-}\pw(G) = \lambda\text{-}\minmax(G,\mathcal{F}_G^{\pw})\quad \text{ and }\quad \lambda\text{-}\td(G) = \lambda\text{-}\minmax(G,\mathcal{F}_G^{\td})\,,\] 
where $\mathcal{F}_G^{\pw}$ is the family of all hypergraphs over $G$ whose hyperedges are the bags of some path decomposition of $G$, and $\mathcal{F}_G^{\td}$ is the family of all hypergraphs over $G$ whose hyperedges are the vertex sets of root-to-leaf paths of some treedepth decomposition of $G$.
To prove $\lambda\text{-}\tw(G)\le \lambda\text{-}\pw(G)\le \lambda\text{-}\td(G)$, it suffices to show that $\mathcal{F}_G^{\td}\subseteq\mathcal{F}_G^{\pw}\subseteq \mathcal{F}_G^{\tw}$.
The fact that every path decomposition is also a tree decomposition implies the second inclusion.
To see the first one, consider a treedepth decomposition $F$ of $G$.
Fix an ordering of the leaves of the decomposition that can be obtained from a DFS traversal on $F$, starting the traversal of each component from its root.
The corresponding ordering of the root-to-leaf paths in $F$ yields a path decomposition of $G$ in which the bags are exactly the vertex sets of the root-to-leaf paths.
This implies that $\mathcal{F}_G^{\td}\subseteq\mathcal{F}_G^{\pw}$.
\end{proof}

\subsection{Clique-boundedness via Ramsey's theorem}\label{sec:Ramsey}

We now prove that for basic hyperparameters, boundedness of the independence variant implies clique-boundedness.
This result follows from Ramsey's theorem~\cite{MR1576401}, which states that for any two nonnegative integers $a$ and $b$, there exists a least nonnegative integer $R(a,b)$ such that any graph with at least $R(a,b)$ vertices has either a clique of size $a$ or an independent set of size $b$.

\begin{proposition}\label{prop:main}
Let $\rho$ be a basic hyperparameter and let $\mathcal{G}$ be a graph class that has bounded $\alpha\text{-}\rho$.
Then $\mathcal{G}$ has clique-bounded $\rho$ with a polynomial binding function.
\end{proposition}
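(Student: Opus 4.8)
The plan is to use Ramsey's theorem to convert a bound on $\alpha\text{-}\rho$ into a bound on $\rho$ in terms of $\omega$. Suppose $\mathcal{G}$ has bounded $\alpha\text{-}\rho$, say $\alpha\text{-}\rho(G') \le k$ for every induced subgraph $G'$ of every $G \in \mathcal{G}$. Write $\rho = (\mathtt{card}, \opt, \mathcal{F})$ so that $\alpha\text{-}\rho = (\alpha, \opt, \mathcal{F})$, i.e. the two hyperparameters share the same hypermapping $\mathcal{F}$ and differ only in the annotated parameter. The goal is to show that $\rho(G) \le f(\omega(G))$ for a polynomial $f$ depending only on $k$; by the remark that a class has clique-bounded $\rho$ exactly when such a binding function exists on all induced subgraphs, and since the hypothesis already quantifies over induced subgraphs, it suffices to establish this for $G$ itself.

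The key observation is the pointwise inequality relating $\mathtt{card}$ and $\alpha$ on annotated graphs via Ramsey numbers: for any annotated graph $(G, X)$, if $\alpha(G[X]) \le k$ and $\omega(G) \le \omega$ (hence $\omega(G[X]) \le \omega$), then by Ramsey's theorem $|X| = \mathtt{card}(G,X) < R(\omega+1, k+1)$, since a set of $R(\omega+1,k+1)$ vertices would contain either a clique of size $\omega+1$ or an independent set of size $k+1$, both impossible. So on every annotated graph $(G,X)$ arising in the relevant computation we have $\mathtt{card}(G,X) \le R(\omega+1, k+1) - 1$ as soon as $\alpha(G,X) \le k$.

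Now I would push this through the min-max / max-min structure. Consider the $\opt = \minmax$ case first. Since $\alpha\text{-}\rho(G) = \min_{H \in \mathcal{F}_G} \max_{X \in H} \alpha(G,X) \le k$, there is a hypergraph $H^\star \in \mathcal{F}_G$ with $\alpha(G,X) \le k$ for all $X \in H^\star$. For this same $H^\star$, every $X \in H^\star$ then satisfies $\mathtt{card}(G,X) \le R(\omega(G)+1, k+1) - 1$ by the observation, so $\max_{X \in H^\star} \mathtt{card}(G,X) \le R(\omega(G)+1,k+1)-1$, and hence $\rho(G) = \min_{H} \max_{X \in H} \mathtt{card}(G,X) \le R(\omega(G)+1, k+1) - 1$. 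The $\opt = \maxmin$ case is symmetric in the right way: $\alpha\text{-}\rho(G) = \max_{H} \min_{X \in H} \alpha(G,X) \le k$ means that for \emph{every} $H \in \mathcal{F}_G$ there exists some $X_H \in H$ with $\alpha(G, X_H) \le k$, hence $\mathtt{card}(G, X_H) \le R(\omega(G)+1,k+1)-1$, so $\min_{X \in H}\mathtt{card}(G,X) \le R(\omega(G)+1,k+1)-1$ for every $H$, and therefore $\rho(G) = \max_H \min_{X \in H} \mathtt{card}(G,X) \le R(\omega(G)+1,k+1)-1$. In both cases we get $\rho(G) \le f(\omega(G))$ with $f(p) = R(p+1, k+1) - 1$.

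The remaining point is that $f$ can be taken to be a \emph{polynomial}. This follows from the standard upper bound $R(a,b) \le \binom{a+b-2}{a-1}$: fixing $k$, we get $R(p+1, k+1) \le \binom{p+k}{k}$, which is a polynomial in $p$ of degree $k$. (If one wants a genuinely nondecreasing polynomial binding function as required by the definition, replace $f$ by this binomial upper bound directly, or take the monotone envelope.) I don't anticipate a serious obstacle here; the only thing to be slightly careful about is making sure the quantifier alternation in the $\minmax$ versus $\maxmin$ cases lands the way described above — in the $\minmax$ case one witness hypergraph suffices, whereas in the $\maxmin$ case one genuinely uses that the bound holds for all hypergraphs — but this is exactly the self-dual structure built into the definitions, so it goes through cleanly.
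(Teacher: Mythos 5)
Your proof is correct and follows essentially the same route as the paper's: the pointwise Ramsey bound $|X|\le R(\omega+1,k+1)-1$ for sets with $\alpha(G[X])\le k$, the identical case split between $\minmax$ (one witness hypergraph) and $\maxmin$ (a small hyperedge in every hypergraph), and the polynomiality via $R(a,b)\le\binom{a+b-2}{a-1}$. No further comment is needed.
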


\begin{proof}
Let $\rho = (\mathtt{card}, \opt, \mathcal{F})$ and $\mathcal{G}$ be as in the proposition.
Then, there exists an integer $k$ such that for every graph $G$ in $\mathcal{G}$, every induced subgraph $G'$ of $G$ satisfies $\alpha\text{-}\rho(G')\le k$. 

In order to show that $\mathcal{G}$ has clique-bounded $\rho$, we show that for every graph $G$ in the class $\mathcal{G}$, every induced subgraph $G'$ of $G$ satisfies $\rho(G')\le f(\omega(G'))$, where $f:\mathbb{N}\to \mathbb{N}$ is the function defined by
\[f(p) = R(p+1,k+1)-1\,,\]
for all $p\in \mathbb{N}$.
Note that the standard proof of Ramsey’s theorem is based on the inequality $R(k,\ell) \leq R(k-1, \ell)+R(k,\ell-1)$, for all $k$, $\ell \geq 2$, which implies that $R(k,\ell) \leq \binom{k+\ell-2}{k-1}$, for all positive integers $k$ and $\ell$.
For fixed $k$, this binomial coefficient is a polynomial in $\ell$.

Let $G$ be a graph in $\mathcal{G}$ and let $G'$ be an induced subgraph of $G$.
Let $p$ be the clique number of $G'$.
Observe that, for any subset $S$ of $V(G')$ such that $\alpha(G'[S])\le k$, we have
\[|S|\le R(p+1, k+1)-1 = f(p)\,,\]
where the inequality follows from Ramsey's theorem, using the fact that $\omega(G'[S]) \le \omega(G') = p$.

Consider first the case when $\mathrm{opt}= \mathrm{minmax}$.
Then
\[\min_{H \in \mathcal{F}_{G'}} \max_{X \in H} \alpha(G'[X]) = \alpha\text{-}\rho(G') \le k\,,\]
that is, there exists a hypergraph $H^* \in \mathcal{F}_{G'}$ such that $\alpha(G'[X])\le k$ for all $X\in H^*$.
Hence, $|X|\le f(p)$ for all $X\in H^*$, and consequently
\[\rho(G') = \min_{H \in \mathcal{F}_{G'}} \max_{X \in H}|X|\le  \max_{X \in H^*}|X| \le f(p) = f(\omega(G'))\,.\]

Suppose now that $\mathrm{opt}= \mathrm{maxmin}$.
Then
\[\max_{H \in \mathcal{F}_{G'}} \min_{X \in H} \alpha(G'[X]) = \alpha\text{-}\rho(G') \le k\,,\]
that is, for every hypergraph $H \in \mathcal{F}_{G'}$ there exists some $X_H\in H$ such that $\alpha(G'[X_H])\le k$, implying that $\min_{X \in H}|X| \le |X_H|\le f(p)$.
Consequently,
\[\rho(G') = \max_{H \in \mathcal{F}_{G'}} \min_{X \in H}|X| \le f(p) = f(\omega(G'))\,.\qedhere\]
\end{proof}

\subsection{Inverting Ramsey}\label{sec:antiRamsey}

Recall that, for a hyperparameter $\rho$, we denote by $\mathcal{B}_\rho$ and $\mathcal{C}_\rho$ the class properties of having bounded~$\rho$ and clique-bounded $\rho$, respectively.
Furthermore, for the case when $\rho$ is a basic hyperparameter, \cref{prop:main} applies, which can be expressed in terms of the inclusion 
\begin{equation}
\mathcal{B}_{\alpha\textrm{-}\rho}\subseteq \mathcal{C}_{\rho}\label{eq:inclusion}
\end{equation}
between the corresponding class properties.
This motivates the following definition.
A basic hyperparameter $\rho$ is said to be \emph{awesome} (or, better yet, \emph{\aw}) 
if \[\mathcal{B}_{\alpha\textrm{-}\rho}= \mathcal{C}_{\rho}\,,\]
that is, clique-boundedness of $\rho$ is equivalent to boundedness of its independence variant.
We say that $\rho$ is \emph{awful} if it is not awesome.
By \cref{eq:inclusion}, a basic hyperparameter is awesome if and only if clique-boundedness of $\rho$ implies boundedness of its independence variant, that is, $\mathcal{C}_{\rho}\subseteq \mathcal{B}_{\alpha\textrm{-}\rho}$.

While these definitions can be extended to graph parameters listed in \cref{sec:hyperparameterisations} by applying them to their canonical hyperparameterisations, in general one needs to be careful when applying them to graph parameters: if the choice of a hyperparameterisation is not obvious, different hyperparameterisations of the same graph parameter may have different $\alpha$-variants.
For example, in \cref{sec:hyperparameterisations} we presented one particular hyperparameterisation of the chromatic number, but undoubtedly many others are possible.

\begin{question}\label{main-question}
Which basic hyperparameters are awesome?
\end{question}

It turns out that this question is not always trivial.\label{discussion-awful-chi}
As shown by \cref{prop:main}, a necessary condition for a hyperparameter $\rho$ to be awesome is that for any hereditary graph class, clique-boundedness of $\rho$ is equivalent to polynomial clique-boundedness.
For the chromatic number, this was conjectured by Esperet~\cite{esperet2017graph}, and disproved only recently by Briański, Davies, and Walczak~\cite{MR4707561}.
Thus, by \cref{prop:main}, chromatic number is awful.
Note that this argument rules out awesomeness not only of the canonical hyperparameterisation of the chromatic number given in \cref{sec:hyperparameterisations}, but of \textsl{any} hyperparameterisation of the chromatic number.
Furthermore, motivated by algorithmic considerations, Dallard, Milani\v{c}, and \v{S}torgel conjectured in \cite{dallard2022secondpaper} that treewidth is awesome.
This conjecture was recently disproved by Chudnovsky and Trotignon (see~\cite{CT24}); indeed, they constructed both graph classes with clique-bounded treewidth that do not have polynomially clique-bounded treewidth, as well as graph classes with polynomially clique-bounded treewidth that have unbounded $\alpha$-treewidth.

It is a natural question whether more restricted parameters such as pathwidth, treedepth, and vertex cover number are awesome.
For pathwidth, the fact that for any hereditary graph class, clique-boundedness of pathwidth is equivalent to polynomial clique-boundedness was shown recently by Hajebi~\cite{hajebi2025polynomialboundspathwidth}; however, this result does not say anything about whether pathwidth is awesome.
Indeed, we show in this paper that both pathwidth and treedepth are awful (\cref{sec:td-pw}).
On the other hand, we identify infinitely many awesome graph parameters (\cref{sec:awesome}), including vertex cover, feedback vertex numbers, and odd cycle transversal number (see \cref{fig:graph-parameters} for a partial illustration of our results).

\begin{figure}[ht]
    \centering
    \resizebox{0.95\linewidth}{!}{
    \begin{tikzpicture}[
          font=\small,
          >=Latex,
          param/.style={draw, rounded corners=2pt, inner sep=3pt, align=center, very thick},
          greenbox/.style={param, draw=green!80, fill=green!40},
          orangebox/.style={param, draw=orange!80, fill=orange!40},
          bluebox/.style={param, draw=blue},
          blackarrow/.style={-Latex, line width=0.5pt, draw=gray!70},
        ]
        \node[greenbox] (n) at (0,7.2) {order};
       
        \node[greenbox] (vc) at (0,6) {vertex cover number};
        
        \node[orangebox] (td)  at (0,4.2) {treedepth};
        
        \node[orangebox] (pw)  at (0,2.4) {pathwidth};
        
        \node[greenbox] (fvs) at (3.2,4.5) {feedback vertex number};
        \node[greenbox] (tw2) at (5,3.5) {$(\tw,3)$-modulator number};
        \node[greenbox] (tw3) at (6.8,2.5) {$(\tw,4)$-modulator number};
        
        \node[orangebox] (tw)  at (0,0.6) {treewidth};

        \node[greenbox] (oct) at (-4,1.5) {odd cycle transversal number};
        \node[greenbox] (chi3) at (-5.5,0.5) {$(\chi,3)$-modulator number};
        \node[greenbox] (chi4) at (-7,-0.5) {$(\chi,4)$-modulator number};

        \node[greenbox] (Delta) at (3.4,0.5) {maximum degree};
        
        \node[orangebox] (chi) at (0,-1.2) {chromatic number};
        \node[greenbox] (omega) at (0,-2.4) {clique number};
        
        \draw[blackarrow] (pw) -- (td);
        \draw[blackarrow] (tw.north) -- (pw);
        \draw[blackarrow] (td) -- (vc);
        \draw[blackarrow] (vc) -- (n);
        \draw[blackarrow] (fvs)-- (vc.south);
        \draw[blackarrow] (tw2)-- (fvs);
        \draw[blackarrow] (tw3)-- (tw2);
        \draw[blackarrow] (7.8,1.9)-- (tw3);
        \node[color=gray!50] at (8.1, 1.84) {{\Large $\ddots$}};
        
        \draw[blackarrow] (tw.north) -- (fvs);
        \draw[blackarrow] (tw.north) -- (tw2.west);
        \draw[blackarrow] (tw.north) -- (tw3.west);

        \draw[blackarrow] (oct.north)-- (vc.south);
        \draw[blackarrow] (chi3)-- ($(oct.south) - (0.6,0)$);
        \draw[blackarrow] (chi4)-- ($(chi3.south) - (0.6,0)$);

        \draw[blackarrow] (chi.north) -- (oct);
        \draw[blackarrow] (chi.north) -- (chi3);
        \draw[blackarrow] (chi.north) -- (chi4);
        \draw[blackarrow] (-7.7,-1.2) -- ($(chi4.south)-(0.2,0)$);
        \node[color=gray!50, rotate=-116] at (-7.9, -1.47) {{\Large $\ddots$}};
        
        \draw[blackarrow] (chi.north) -- (Delta.south);
        
        \draw[blackarrow] (chi)  -- (tw);
        \draw[blackarrow] (omega) -- (chi);
    \end{tikzpicture}
    }
    \caption{The diagram illustrates various \tcbox[nobeforeafter, tcbox raise base,
    colback=green!40, colframe=green!80, left=3pt, right=3pt, top=4pt, bottom=1pt]{awesome} and \tcbox[nobeforeafter, tcbox raise base,
    colback=orange!40, colframe=orange!80, left=3pt, right=3pt, top=1pt, bottom=1pt]{awful} graph parameters (with respect to their canonical hyperparameterisations, as defined in \Cref{sec:hyperparameterisations}), along with some of the relations between them. 
    An arrow from a parameter $\rho_1$ to a parameter $\rho_2$ indicates that boundedness of $\rho_2$ in a class of graphs implies boundedness of $\rho_1$ in that class. For clarity, not all such relations are shown in the figure.}
    \label{fig:graph-parameters}
\end{figure}

\section{Awful graph parameters}
\label{sec:td-pw}
We show that neither treedepth nor pathwidth are awesome graph parameters, by constructing a graph class that has clique-bounded treedepth but unbounded $\alpha$-pathwidth.
The crucial ingredient used for this result is a graph transformation that increases $\alpha$-pathwidth by exactly one.
We introduce this transformation in \cref{sec:alpha-pw}, and use it 
in \cref{sec:bddtd-not-alpha-bdd-pw} to construct a class witnessing awfulness of the two parameters.

\subsection{Increasing \texorpdfstring{$\alpha$}{α}-pathwidth by 1}\label{sec:alpha-pw}

The transformation is based on the following graph. The \emph{subdivided claw} or \emph{s-claw} is the graph obtained from subdividing once each edge of the claw.
Given a graph $G$, the \emph{s-claw substitution} of $G$ is the graph denoted by $\s(G)$ and defined as the graph obtained by substituting a copy of $G$ into each of the three leaves (vertices of degree $1$) of the subdivided claw.
More formally, $\s(G)$ is the graph obtained from the disjoint union of three copies of $G$, say $G_1$, $G_2$ and $G_3$, by adding three new vertices $v_1$, $v_2$ and $v_3$, making each vertex $v_i$ adjacent to all vertices in $G_i$, and adding a fourth new vertex $w$ adjacent precisely to $v_1$, $v_2$ and~$v_3$.

\begin{theorem}\label{increase-path-alpha-by-one}
Every graph $G$ satisfies $\pin(\s(G)) = \pin(G)+1$.
\end{theorem}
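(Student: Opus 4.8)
My plan is to prove the two inequalities $\pin(\s(G)) \le \pin(G)+1$ and $\pin(\s(G)) \ge \pin(G)+1$ separately, after first disposing of the degenerate case: if $V(G)=\emptyset$ then $\s(G)\cong K_{1,3}$, and both sides equal $1$ (for the upper bound use the bags $\{w,v_1\},\{w,v_2\},\{w,v_3\}$; the lower bound holds since $K_{1,3}$ has an edge). So from now on I assume $G$ has a vertex and set $k:=\pin(G)\ge 1$.

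For the upper bound I would exhibit an explicit path decomposition of $\s(G)$ of value $k+1$, obtained by ``inflating'' a path decomposition of the subdivided claw. Fix a path decomposition of $G$ of value $k$ and let $(P^{(t)},\beta^{(t)})$ be the corresponding path decomposition of the copy $G_t$, for $t\in\{1,2,3\}$. Concatenate, in this order: the bags of $P^{(1)}$ each enlarged by $v_1$; the bag $\{v_1,w,v_2\}$; the bag $\{w,v_2,v_3\}$; the bags of $P^{(3)}$ each enlarged by $\{v_2,v_3\}$; the bags of $P^{(2)}$ each enlarged by $\{v_2\}$. A routine check shows this is a path decomposition of $\s(G)$: each vertex occupies a contiguous run of bags, and every edge---inside some $G_t$, between $v_t$ and $G_t$, or incident to $w$---lies in a common bag. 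For the value I would check each type of bag: since $v_t$ is complete to $G_t$, a bag $\beta^{(t)}(\cdot)\cup\{v_t\}$ has independence number at most $\max(k,1)=k$; the two ``link'' bags $\{v_1,w,v_2\}$ and $\{w,v_2,v_3\}$ have independence number $2\le k+1$; and a bag $\beta^{(3)}(\cdot)\cup\{v_2,v_3\}$ has independence number at most $k+1$, because $v_3$ is complete to $V(G_3)$ while $v_2$ is anticomplete to it and $v_2\not\sim v_3$, so an independent set in it either lies in $\beta^{(3)}(\cdot)$ and possibly adds $v_2$ (size $\le k+1$) or equals $\{v_2,v_3\}$. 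Hence every bag has independence number at most $k+1$, so $\pin(\s(G))\le k+1$.

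For the lower bound I would argue by contradiction: suppose $\s(G)$ admits a path decomposition $(Q,\gamma)$ in which every bag has independence number at most $k$, and regard $Q$ as linearly ordered. For each $i\in\{1,2,3\}$, since $v_i$ is complete to $G_i$ the subgraph $\s(G)[V(G_i)\cup\{v_i\}]$ is connected, so the set $J_i$ of nodes whose bag meets $V(G_i)\cup\{v_i\}$ is an interval of $Q$; likewise the set $I_w$ of nodes whose bag contains $w$ is a (nonempty) interval, and since $v_iw$ is an edge we have $I_w\cap J_i\ne\emptyset$. Restricting $(Q,\gamma)$ to $J_i$ and intersecting each bag with $V(G_i)$ gives a path decomposition of $G_i\cong G$, so some node $q_i\in J_i$ satisfies $\alpha(\s(G)[\gamma(q_i)\cap V(G_i)])\ge\pin(G_i)=k$; fix a $k$-element independent set $S_i\subseteq\gamma(q_i)\cap V(G_i)$. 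Every vertex of $\s(G)$ outside $V(G_i)\cup\{v_i\}$---namely $w$, the $v_j$ with $j\ne i$, and all of $V(G_j)$ with $j\ne i$---is anticomplete to $V(G_i)$; so if $\gamma(q_i)$ contained such a vertex $u$, then $S_i\cup\{u\}$ would be an independent set of size $k+1$ inside $\gamma(q_i)$, a contradiction. Hence $\gamma(q_i)\subseteq V(G_i)\cup\{v_i\}$, which forces $q_i\notin I_w$ and $q_i\notin J_j$ for all $j\ne i$; in particular $q_1,q_2,q_3$ are pairwise distinct and all lie outside the interval $I_w$.

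The crux---and the step I expect to be the main obstacle---is to extract a contradiction purely from how the three nodes $q_1,q_2,q_3$ sit relative to $I_w$. Since each of them lies entirely to the left or entirely to the right of $I_w$, two of them, say $q_i$ occurring before $q_j$, lie on the same side; assume both lie to the left (the other case is symmetric). Choose any node $p_i\in I_w\cap J_i$; then $p_i$ lies to the right of $q_i$ and of $q_j$, while $q_i$ lies to the left of $q_j$, so $q_j$ is strictly between $q_i$ and $p_i$. But $J_i$ is an interval containing both $q_i$ and $p_i$, hence it contains $q_j$, contradicting $q_j\notin J_i$. This rules out the assumed decomposition, so $\pin(\s(G))\ge k+1$; together with the upper bound this yields $\pin(\s(G))=\pin(G)+1$.
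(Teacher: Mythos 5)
Your proof is correct, and both inequalities go through; the overall strategy (explicit decomposition for the upper bound, contradiction via three ``heavy'' nodes for the lower bound) matches the paper's, but the execution differs in both halves in ways worth noting. For the upper bound, the paper simply adds the adjacent pair $\{v_i,w\}$ to \emph{every} bag of an optimal decomposition of each copy and concatenates, which gives the $+1$ bound in one line (``adding two adjacent vertices raises the independence number of a bag by at most one''); your gluing with the two link bags $\{v_1,w,v_2\}$ and $\{w,v_2,v_3\}$ is also valid but requires the case analysis you carry out for the segment-4 bags, so you pay a little more for no extra gain here. For the lower bound, your nodes $q_i$ are exactly the paper's ``big'' nodes, and both arguments hinge on the same observation that a bag containing a $k$-element independent set of $G_i$ cannot meet anything anticomplete to $V(G_i)$. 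The divergence is in how the contradiction is extracted: the paper orders the three big nodes along the path and chases the adjacency chain $v$--$v_1$--$w$ to force $w$ into the middle big bag, whereas you push the anticompleteness observation further to get the full containment $\gamma(q_i)\subseteq V(G_i)\cup\{v_i\}$, so that each $q_i$ avoids $I_w$ and every $J_j$ with $j\neq i$, and then finish by pigeonholing two of the $q_i$ onto the same side of $I_w$ and using that $J_i$ is an interval joining $q_i$ to $I_w$. Your finish is arguably cleaner---it needs only two of the three branches and no explicit vertex-chasing---at the cost of explicitly invoking the fact that the trace of a connected subgraph in a path decomposition is an interval (which you justify correctly); just make sure, when you say the second pigeonhole case is ``symmetric,'' that you apply the interval argument to $J_j$ rather than $J_i$ when both nodes lie to the right of $I_w$, which is what reversing the path amounts to.
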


\begin{proof}
Let $G_1$, $G_2$, $G_3$, $v_1$, $v_2$, $v_3$, and $w$ be as in the definition of the s-claw substitution of $G$.

We first show that $\pin(\s(G)) \le \pin(G)+1$.
For each copy $G_i$ of $G$, fix a path decomposition $\mathcal{Q}_i$ with minimum independence number.
Adding $v_i$ and $w$ to each bag of $\mathcal{Q}_i$ results in a path decomposition $(P^i,\beta_i)$ of the subgraph of $\s(G)$ induced by $V(G_i)\cup\{v_i,w\}$.
These three path decompositions can be combined into a single path decomposition $\mathcal{P}$ of the s-claw substitution of $G$ in the natural way: take a new path $P$ with vertex set $V(P^1)\cup V(P^2)\cup V(P^3)$ (disjoint union) that contains each the three paths $P^i$ as a subpath and assign to each node $x\in V(P)$ the bag $\beta_i(x)$ where $i\in \{1,2,3\}$ is such that $x$ belongs to the path $P^i$.
Since $G_i$ is isomorphic to $G$, the independence number of $\mathcal{Q}_i$ is $\pin(G)$.
Adding two adjacent vertices to each bag can increase the independence number of each bag by at most one.
It follows that the independence number of $\mathcal{P}$ is at most $\pin(G)+1$.
Thus, $\pin(\s(G)) \le \pin(G)+1$, as claimed.

Next, we show that $\pin(\s(G)) \ge \pin(G)+1$.
Suppose for a contradiction that $\pin(\s(G)) \le \pin(G)$.
Let us refer to $G_1$, $G_2$, and $G_3$ as the red, green, and blue copies of $G$ and to their vertices as the red, green, and blue vertices, respectively.
Let $\mathcal{P} =(P,\beta)$ be a path decomposition of $\s(G)$ with minimum independence number.
Consider one of the copies of $G$, say the red one.
Intersecting each bag of $\cal P$ with the set of red vertices yields a path decomposition $(P,\beta_1)$ of the red copy of $G$.
Let us call a \emph{big red node} of $P$ an arbitrary but fixed node $x$ of $P$ that maximises the independence number of the subgraph of the red copy of $G$ induced by the bag $\beta_1(x)$.
This independence number is referred to as the \emph{red independence number}.
Denoting it by $r$, we have $\pin(G) = \pin(G_1)\le r$ and therefore, since we assumed that $\pin(s(G))\le \pin(G)$, it holds that $\pin(\s(G)) \le r$.
We can similarly define a big green node $y$ and a big blue node $z$ of $P$, as well as the green and the blue independence numbers, denoted by $g$ and $b$, respectively.
By the same arguments as above, we have $\pin(\s(G)) \le g$ and $\pin(\s(G)) \le b$.

Assume that the big red and the big green nodes of $P$ coincide.
Then the subgraph of $\s(G)$ induced by the corresponding bag of $\mathcal{P}$ has independence number at least $r+g>r$, which is in contradiction with $\alpha(\mathcal{P}) = \pin(\s(G)) \le r$.
Hence, the three big nodes of $P$ are pairwise distinct and we may assume without loss of generality that they appear along $P$ in the order big red node, big green node, and big blue node.

Let $v$ be any vertex contained in the bag $\beta_1(x)$ labeling the big red node.
Then $v$ is a vertex of the red copy $G_1$ of $G$.
In particular, the subpath $P_v$ of $P$ consisting of the nodes labeled by bags containing $v$ does not contain the big green node $y$, since otherwise the subgraph of $\s(G)$ induced by the bag $\beta(y)$ would have independence number at least $g+1$, contradicting
$\alpha(\mathcal{P}) = \pin(\s(G)) \le g$.
Since $v$ belongs to the bag labeling the big red node, which by assumption appears before the big green node, we infer that the entire path $P_v$ appears strictly before the big green node.
Furthermore, since the vertices $v$ and $v_1$ are adjacent in $G$, there exists a node $x'$ of $P$ such that $v$ and $v_1$ both belong to the bag $\beta(x')$.
Since the node $x'$ belongs to $P_v$, it appears strictly before the big green node $y$.
A similar argument as above shows that the entire subpath $P_{v_1}$ of $P$ consisting of the nodes labeled by bags of $(P,\beta)$ containing $v_1$ appears strictly before the big green node.
Since the vertices $v_1$ and $w$ are adjacent in $G$, there exists a node $x_1$ of $P$ such that $v_1$ and $w$ both belong to the bag $\beta(x_1)$.
The node $x_1$ belongs to the path $P_{v_1}$ and therefore appears strictly before the big green node $y$.

By symmetry, we can establish the existence of a node $x_3$ of $P$ that appears strictly after the big green node $y$ such that the vertices $v_3$ and $w$ both belong to the bag $\beta(x_3)$.
Since the vertex $w$ appears in a bag labeling a node before the big green node (namely $x_1$) as well as in a bag labeling a node after the big green node  (namely $x_3$), it must also appear in the bag $\beta(y)$.
Therefore, the subgraph of $\s(G)$ induced by the bag  $\beta(y)$ has independence number at least $g+1$, a contradiction with $\alpha(\mathcal{P}) = \pin(\s(G)) \le g$.
\end{proof}

Let us note that the s-claw substitution also increases $\alpha$-treedepth by~$1$.
To see that $\alpha$-treedepth indeed increases, the reader can either adapt the proof of \cref{increase-path-alpha-by-one} or, more simply, analyse which vertex of the obtained graph acts as the root of a treedepth decomposition with minimum independence number.
In fact, this latter proof approach shows that $\alpha$-treedepth is increased exactly by~$1$ also by the simpler operation of \emph{$P_5$ substitution}, which is defined analogously to the s-claw substitution, but requiring only two copies of $G$ instead of three.
However, in contrast with $\alpha$-treedepth and $\alpha$-pathwidth, we are not aware of any transformation with the same property for $\alpha$-treewidth (that does not rely on computing the $\alpha$-treewidth of $G$).

\begin{remark}\label{rem:suchy}
There are other graph transformations increasing $\alpha$-pathwidth by exactly~$1$.
For example, as observed by Ond\v{r}ej Such\'y (personal communication, 2024), a result analogous to \Cref{increase-path-alpha-by-one} also holds for the operation of \emph{net substitution}, which is defined similarly as the s-claw substitution but with respect to the line graph of the subdivided claw.
\end{remark}

\subsection{Clique-bounded treedepth does not imply bounded \texorpdfstring{$\alpha$}{α}-pathwidth}\label{sec:bddtd-not-alpha-bdd-pw}   
Consider the sequence $\{S_n\}_{n\ge 1}$ of graphs defined as follows:
$S_1 = K_1$ and for all $n\ge 2$, the graph $S_n$ is the s-claw substitution of $S_{n-1}$.
Let $\Gamma$ denote the hereditary closure of the sequence $\{S_n\}_{n\ge 1}$, that is, $\Gamma$ consists of all graphs in the sequence and all their induced subgraphs.

\Cref{increase-path-alpha-by-one} readily implies the following.

\begin{corollary}\label{cor:gamma-unbounded-alpha-pathwidth}
The class $\Gamma$ has unbounded $\alpha$-pathwidth.
\end{corollary}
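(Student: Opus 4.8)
The plan is to iterate \Cref{increase-path-alpha-by-one} along the sequence $\{S_n\}_{n \ge 1}$ to pin down the exact value of $\pin(S_n)$, and then invoke the definition of $\Gamma$ together with basic properties of $\alpha$-pathwidth to conclude unboundedness.

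First I would compute $\pin(S_1) = \pin(K_1)$. Since $K_1$ has a path decomposition consisting of a single bag $\{v\}$, and that bag induces a graph with independence number $1$, we get $\pin(K_1) = 1$. (More generally it is trivial that $\pin(G) \ge 1$ for any nonempty $G$, since every vertex lies in some bag, and the empty graph is the only case giving $0$.) Then, since $S_n = \s(S_{n-1})$ for $n \ge 2$, \Cref{increase-path-alpha-by-one} gives $\pin(S_n) = \pin(S_{n-1}) + 1$ for all $n \ge 2$. By a straightforward induction, $\pin(S_n) = n$ for all $n \ge 1$.

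Next I would connect this to the class property ``unbounded $\alpha$-pathwidth.'' By definition, $\Gamma$ has bounded $\alpha$-pathwidth if and only if there is a constant $c$ with $\pin(G') \le c$ for every induced subgraph $G'$ of every graph in $\Gamma$; equivalently, since $\Gamma$ is already the hereditary closure of $\{S_n\}_{n\ge1}$, it suffices to look at $\pin$ over all members of $\Gamma$, and in particular $\pin(S_n) = n$ is unbounded as $n \to \infty$. Hence no such constant $c$ exists, so $\Gamma$ has unbounded $\alpha$-pathwidth. (One could phrase this even more directly: for every $c$, the graph $S_{c+1} \in \Gamma$ satisfies $\pin(S_{c+1}) = c+1 > c$.)

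I do not expect any genuine obstacle here: the statement is labeled a corollary precisely because \Cref{increase-path-alpha-by-one} does all the work, and the only things to check are the base case $\pin(K_1) = 1$ and the bookkeeping of the induction. The mildest subtlety is making sure the base value is correct (it is $1$, not $0$, under this paper's convention that the width of a decomposition is the size of the largest bag rather than that size minus one), but this only affects the formula $\pin(S_n) = n$ and not the unboundedness conclusion. So the proof is essentially: induct to get $\pin(S_n) = n$, then observe $n \to \infty$.
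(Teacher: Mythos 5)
Your proposal is correct and follows exactly the paper's argument: the base case $\pin(K_1)=1$ plus iterating \Cref{increase-path-alpha-by-one} yields $\pin(S_n)=n$, whence unboundedness. The extra bookkeeping you include (the convention that width counts bag size, and the remark about induced subgraphs versus members of $\Gamma$) is fine but not needed beyond what the paper states.
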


\begin{proof}
Since the $\alpha$-pathwidth of $K_1$ is $1$, \cref{increase-path-alpha-by-one} implies that $\pin(S_n) = n$, for every positive integer~$n$.
\end{proof}

On the other hand, we show next the class $\Gamma$ has treedepth bounded by a linear function of the clique number.

\begin{lemma}\label{gamma-td-2omega}
For each graph $G \in \Gamma$, it holds that $\td(G) \leq 2\omega(G)$.
\end{lemma}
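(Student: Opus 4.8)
The plan is to prove, by induction on $n$, the following statement: every induced subgraph $H$ of $S_n$ satisfies $\td(H)\le 2\omega(H)$. Since $\Gamma$ is the union of the hereditary closures of the graphs $S_n$ and the inequality is isomorphism-invariant, this implies the lemma. The base case $n=1$ is trivial, as the only induced subgraphs of $S_1=K_1$ are $K_1$ (with $\td=1\le 2=2\omega$) and the empty graph (with $\td=0=2\omega$). For the inductive step one fixes $n\ge 2$ and recalls that $S_n=\s(S_{n-1})$ is obtained from three disjoint copies $G_1,G_2,G_3$ of $S_{n-1}$ by adding vertices $v_1,v_2,v_3$ (with $v_i$ complete to $V(G_i)$) and a vertex $w$ adjacent exactly to $v_1,v_2,v_3$, with no further edges. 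Given an induced subgraph $H$, write $H_i:=H[V(H)\cap V(G_i)]$ — an induced subgraph of a copy of $S_{n-1}$, so $\td(H_i)\le 2\omega(H_i)$ by induction — and $J:=\{i: v_i\in V(H)\}$. The three elementary facts I would use are: the treedepth of a disjoint union of graphs is the maximum of their treedepths; $\td(G)\le\td(G-x)+1$ for every vertex $x$ (attach $x$ as a new common root above an optimal decomposition of $G-x$); and attaching to $G$ a vertex adjacent to all of $V(G)$ raises $\omega$ by exactly $1$.

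I would then distinguish two cases according to whether $w\in V(H)$. If $w\notin V(H)$, then $H$ is the disjoint union of the graphs $H^{(i)}:=H[V(H_i)\cup(\{v_i\}\cap V(H))]$ for $i=1,2,3$, since $w$ is absent and, for $i\ne j$, there are no edges between $H^{(i)}$ and $H^{(j)}$ ($v_i\not\sim v_j$, $v_i\not\sim G_j$, and distinct copies are non-adjacent). For $i\notin J$ we have $H^{(i)}=H_i$, which satisfies the bound by induction; for $i\in J$, the vertex $v_i$ is universal in $H^{(i)}$, so $\td(H^{(i)})\le\td(H_i)+1\le 2\omega(H_i)+1\le 2(\omega(H_i)+1)=2\omega(H^{(i)})$. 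Taking the maximum over $i$ yields $\td(H)=\max_i\td(H^{(i)})\le 2\max_i\omega(H^{(i)})=2\omega(H)$.

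The case $w\in V(H)$ is the heart of the argument. Put $A:=\{w\}\cup\{v_i:i\in J\}\cup\bigcup_{i\in J}V(H_i)$. Since $w$ is non-adjacent to every $G_i$ and $v_i$ is non-adjacent to $G_j$ for $j\ne i$, no edge of $H$ leaves $A$, so $H$ is the disjoint union of $H[A]$ and the graphs $H_i$ with $i\notin J$; hence $\td(H)=\max\bigl(\td(H[A]),\max_{i\notin J}\td(H_i)\bigr)$ and $\omega(H)=\max\bigl(\omega(H[A]),\max_{i\notin J}\omega(H_i)\bigr)$. For $i\notin J$, $\td(H_i)\le 2\omega(H_i)\le 2\omega(H)$ by induction. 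For $H[A]$: if $J=\emptyset$ then $H[A]=K_1$, so $\td(H[A])=1\le 2=2\omega(H[A])$; otherwise $H[A]-w$ is the disjoint union over $i\in J$ of the graphs $H_i+v_i$ (with $v_i$ universal), whence $\td(H[A])\le 1+\max_{i\in J}\bigl(\td(H_i)+1\bigr)\le\max_{i\in J}\bigl(2\omega(H_i)+2\bigr)=2\max_{i\in J}(\omega(H_i)+1)\le 2\omega(H[A])$, using $\omega(H[A])\ge\omega(H_i+v_i)=\omega(H_i)+1$ for each $i\in J$. In all cases $\td(H)\le 2\omega(H)$, completing the induction.

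The only delicate point — and where a careless argument breaks down — is precisely the case $w\in V(H)$: deleting $w$ together with all present $v_i$ at once and invoking the crude bound $\td(G)\le\td(G-S)+|S|$ costs up to three units, whereas $\omega$ increases by only one, so the factor $2$ would be lost. The remedy is structural: after removing $w$ the copies fall apart into independent components, so one pays for $w$ only once and charges each remaining $v_i$ against the corresponding $+1$ in $\omega$. (The same bookkeeping in fact gives $\td(H)\le 2\omega(H)-1$ for every non-empty $H\in\Gamma$, which would make the inequalities above slightly cleaner, but only the stated bound is needed.)
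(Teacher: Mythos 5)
Your proof is correct and rests on the same key construction as the paper's: root the treedepth decomposition at $w$, hang each present $v_i$ below it, attach an optimal decomposition of the corresponding copy underneath, and charge the two extra levels against the increase $\omega(H_i+v_i)=\omega(H_i)+1$. The only difference is organisational---you run a direct induction on $n$ with an explicit case analysis on whether $w$ and the $v_i$ are present, whereas the paper uses a minimal counterexample to first reduce to a connected graph with no universal vertex containing $w$; both handle the delicate point (not paying $|\{w,v_1,v_2,v_3\}|$ all at once) in the same way.
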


\begin{proof}
Suppose for a contradiction that there exists a graph $G\in \Gamma$ such that $\td(G) > 2\omega(G)$.
We may assume that $G$ is a minimal counterexample, that is, that
$\td(G')\le 2\omega(G')$ holds for all proper induced subgraphs $G'$ of~$G$.
Since the treedepth of a disconnected graph is the maximum of the treedepths of its connected components, and the same holds for the clique number, the minimality of $G$ implies that $G$ is connected.
Furthermore, since the desired inequality holds for graphs with at most one vertex, we infer that $G$ has at least two vertices.

Next, we show that $G$ does not contain any universal vertices.
Suppose for a contradiction that $u$ is a universal vertex in $G$ and let $G' = G-u$.
Then $\omega(G') = \omega(G)-1$.
By the minimality of $G$, we obtain that $\td(G')\le 2\omega(G') = 2 \omega(G)-2$.
Note that $G'$ may be disconnected.
Fix a rooted forest $F$ such that $G'$ is a subgraph of the transitive closure of $F$ and the depth of $F$ is at most $2\omega(G)-2$.
Let $T$ be the rooted tree obtained from $F$ by adding a new vertex $r$ connected by an edge to every root of $F$, and making $r$ the root of $T$.
Then $T$ is a rooted tree such that $G$ is a subgraph of the transitive closure of $T$.
The depth of $T$ is the depth of $F$ plus $1$, and therefore bounded by $2\omega(G)-1$.
This shows that $G$ has treedepth at most $2\omega(G)-1$, a contradiction.

Let $n$ be the smallest positive integer such that $G$ is an induced subgraph of $S_n$.
Since $G$ has at least two vertices, $n\ge 2$.
As the graph $S_{n}$ is the s-claw substitution of the graph $S_{n-1}$, it is obtained from the disjoint union of three copies $G_1$, $G_2$, and $G_3$ of $S_{n-1}$ by adding three new vertices $v_1$, $v_2$, $v_3$, making each vertex $v_i$ adjacent to all vertices in $G_i$, and adding a fourth new vertex $w$ adjacent only to $v_1$, $v_2$ and~$v_3$.
By the minimality of $n$, we have $V(G)\not\subseteq V(G_i)$ for all $i\in \{1,2,3\}$.
If $w$ is not a vertex of $G$, we may assume by the connectedness of $G$ that every vertex of $G$ belongs to $V(G_1)\cup \{v_1\}$, but then the minimality of $n$ implies that $v_1\in V(G)$ is a universal vertex in $G$, a contradiction.
This shows that $w\in V(G)$.

For each $i\in \{1,2,3\}$, let us denote by $H_i$ the subgraph of $G$ induced by $V(G)\cap V(G_i)$.
Let $I$ be the set of all indices $i\in \{1,2,3\}$ such that $V(H_i)\neq \emptyset$.
For each $i\in I$, the graph $H_i$ is a nonempty induced subgraph of $G$ with strictly fewer vertices than $G$, since $w\in V(G)\setminus V(H_i)$.
Thus, the minimality of $G$ implies that $\td(H_i)\le 2\omega(H_i)$.
By the connectedness of $G$, for each $i\in I$ the vertex $v_i$ belongs to $G$, hence adding $v_i$ to a maximum clique in $H_i$ yields a clique in $G$, implying that $\omega(G)\ge \omega(H_i)+1$.
For each $i\in I$, let $T_i$ be a rooted tree with root $r_i$ such that $H_i$ is a subgraph of the transitive closure of $T_i$ and the depth of $T_i$ is at most $2\omega(H_i)$.
Let $T_i'$ be the rooted tree obtained from $T_i$ by adding to it two vertices $v_i$ and $w$, edges $wv_i$ and $v_ir_i$, and making $w$ a root.
Then $T_i'$ is a rooted tree such that the subgraph of $G$ induced by $V(H_i)\cup\{v_i,w\}$ is a subgraph of the transitive closure of $T_i'$.
The depth of $T_i'$ is at most $2\omega(H_i)+2\le 2\omega(G)$.
Also, for each $i\in \overline{I} = \{1,2,3\}\setminus I$, let us denote by 
$T_i'$ the rooted tree with vertex set $V(G)\cap \{v_i,w\}$ and $w$ as a root.
Note that the subgraph of $G$ induced by $V(G)\cap \{v_i,w\}$ is a subgraph of the transitive closure of $T_i'$, and the depth of $T_i'$ is at most $2\le 2\omega(G)$.
Thus, for each $i \in \{1,2,3\}$, we have obtained a rooted tree $T_i'$ such that the subgraph of $G$ induced by the vertices of $G$ that belong to $V(G_i)\cup\{v_i,w\}$ is a subgraph of the transitive closure of $T_i'$, and the depth of $T_i'$ is at most $2\omega(G)$.
These three trees have a common root, namely the vertex $w$, and their union is a rooted tree $T$ of depth at most $2\omega(G)$ such that $G$ is a subgraph of the transitive closure of $T$.
Therefore, $\td(G)\le 2\omega(G)$, a contradiction.
\end{proof}

\medskip
We now have everything ready to show the following.

\begin{theorem}\label{tw-pw-awful}
Treedepth and pathwidth are awful parameters.
\end{theorem}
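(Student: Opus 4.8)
The plan is to use the single hereditary class $\Gamma$ constructed in \cref{sec:bddtd-not-alpha-bdd-pw} as a witness for the awfulness of both parameters. Recall from \cref{sec:antiRamsey} that a basic hyperparameter $\rho$ is awesome if and only if $\mathcal{C}_\rho\subseteq\mathcal{B}_{\alpha\text{-}\rho}$; hence, to prove that $\rho$ is awful it suffices to exhibit one hereditary graph class that belongs to $\mathcal{C}_\rho$ but not to $\mathcal{B}_{\alpha\text{-}\rho}$. For both pathwidth and treedepth this class will be $\Gamma$.

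First I would deal with pathwidth. Applying \cref{prop:eta} with $\lambda = \mathtt{card}$ gives $\pw(G)\le\td(G)$ for every graph $G$, and combining this with \cref{gamma-td-2omega} yields $\pw(G)\le\td(G)\le 2\omega(G)$ for every $G\in\Gamma$. Since $\Gamma$ is hereditary, this shows that $\Gamma$ has clique-bounded pathwidth, with the linear binding function $f(p)=2p$; that is, $\Gamma\in\mathcal{C}_{\pw}$. On the other hand, \cref{cor:gamma-unbounded-alpha-pathwidth} states that $\Gamma$ has unbounded $\alpha$-pathwidth, so $\Gamma\notin\mathcal{B}_{\alpha\text{-}\pw}$. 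Therefore $\mathcal{C}_{\pw}\not\subseteq\mathcal{B}_{\alpha\text{-}\pw}$, and pathwidth is awful.

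Next I would treat treedepth, again with the class $\Gamma$. By \cref{gamma-td-2omega} and the heredity of $\Gamma$, the class $\Gamma$ has clique-bounded treedepth, so $\Gamma\in\mathcal{C}_{\td}$. To see that $\Gamma\notin\mathcal{B}_{\alpha\text{-}\td}$, I would apply \cref{prop:eta} with $\lambda=\alpha$, obtaining $\pin(G)\le\alpha\text{-}\td(G)$ for every graph $G$; since $\Gamma$ has unbounded $\alpha$-pathwidth by \cref{cor:gamma-unbounded-alpha-pathwidth}, it has unbounded $\alpha$-treedepth as well, that is, $\Gamma\notin\mathcal{B}_{\alpha\text{-}\td}$. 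Hence $\mathcal{C}_{\td}\not\subseteq\mathcal{B}_{\alpha\text{-}\td}$, so treedepth is also awful.

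There is no substantial obstacle remaining at this point: all the real work has been done in \cref{increase-path-alpha-by-one} (which supplies the unbounded $\alpha$-pathwidth side) and in \cref{gamma-td-2omega} (which supplies the clique-bounded treedepth side), and the present argument is just a short assembly of these facts. The only points deserving attention are the directions of the two monotonicity inequalities supplied by \cref{prop:eta} — $\pw\le\td$ on the cardinality side, used to push clique-boundedness \emph{downward} from treedepth to pathwidth, and $\pin\le\alpha\text{-}\td$ on the independence side, used to push unboundedness \emph{upward} from $\alpha$-pathwidth to $\alpha$-treedepth — together with the remark that the heredity of $\Gamma$ lets us state all boundedness and clique-boundedness conditions directly in terms of the graphs of $\Gamma$ rather than in terms of all their induced subgraphs.
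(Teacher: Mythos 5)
Your proposal is correct and follows essentially the same route as the paper's own proof: both use the class $\Gamma$ as the single witness, combine \cref{gamma-td-2omega} with the inequality $\pw\le\td$ from \cref{prop:eta} to get clique-boundedness of both parameters, and combine \cref{cor:gamma-unbounded-alpha-pathwidth} with $\alpha\text{-}\pw\le\alpha\text{-}\td$ to get unboundedness of both independence variants. The only difference is the order in which the two parameters are treated, which is immaterial.
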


\begin{proof}
For both parameters, their awfulness is witnessed by the class $\Gamma$.
Recall that, by \Cref{prop:eta}, $\pw(G)\le \td(G)$ and $\alpha\text{-}\pw(G)\le \alpha\text{-}\td(G)$, for every graph $G$.
Hence, \Cref{cor:gamma-unbounded-alpha-pathwidth} implies that the class $\Gamma$ has unbounded $\alpha$-treedepth.
Since \Cref{gamma-td-2omega} states that $\Gamma$ is $(\td,\omega)$-bounded, we conclude that treedepth is not awesome.
Similarly, since every graph class that is $(\td,\omega)$-bounded is also $(\pw,\omega)$-bounded, the class $\Gamma$ is $(\pw,\omega)$-bounded, which, together with \Cref{cor:gamma-unbounded-alpha-pathwidth}, implies that pathwidth is not awesome.
\end{proof}

The fact that pathwidth is awful is particularly interesting in view of the aforementioned result of Hajebi~\cite{hajebi2025polynomialboundspathwidth}, showing that if a graph class has clique-bounded pathwidth, then this can always be certified with a \textsl{polynomial} binding function; recall that the analogous implication fails for treewidth~\cite{CT24}.

Let us also remark that the class $\Gamma$ cannot be used to obtain a simpler proof of the fact that treewidth is awful.
Indeed, a simple inductive argument shows that every graph $S_n$ is chordal (that is, it does not contain induced cycles of length more than $3$) and $P_6$-free.
Hence, $\Gamma$ is a subclass of the class of chordal graphs, which have $\alpha$-treewidth at most~$1$ (see~\cite{dallard2022firstpaper}).
Since $\Gamma$ is in fact a subclass of the class of $\{P_6,C_4,C_5,C_6\}$-free graphs, this shows that for graph classes defined by finitely many forbidden induced subgraphs, bounded $\alpha$-treewidth is not equivalent to bounded $\alpha$-pathwidth.\footnote{One could obtain a simpler separating example, namely the class of $\{P_5,C_4,C_5\}$-free graphs, by applying \Cref{rem:suchy}.}
This is in contrast with the fact that for classes defined by finitely many forbidden induced subgraphs, bounded treewidth is equivalent to bounded pathwidth (see~\cite{MR4385180,MR4596345}).

\section{Awesome graph parameters}\label{sec:awesome}  

In the previous section we identified a number of awful parameters. 
Many other parameters of this type are waiting to be explored. 
In order to show that the world of graph parameters is not so awful, in this section we turn to awesome parameters.

\subsection{Order, independence number, maximum degree, and clique number}    

We show here, for the sake of completeness, that certain graph parameters are awesome, as a simple consequence of their definitions.
Recall that we identify the order, as a graph parameter, with any of its trivial basic hyperparameterisations, $\opt\in \{\minmax,\maxmin\}$ and $\mathcal{F}_G = \big\{\{V(G)\}\big\}$, for all graphs $G$.
Furthermore, we identify the independence number with its canonical hyperparameterisation given by $\opt = \minmax$ and $\mathcal{F}_G = \big\{\{X \colon X$ is an independent set in $G\}\big\}$, for all graphs $G$.

\begin{proposition}\label{prop:alpha-and-n-are-awesome}
The order and the independence number are awesome graph parameters.
\end{proposition}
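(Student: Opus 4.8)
The plan is to verify the equality $\mathcal{B}_{\alpha\textrm{-}\rho} = \mathcal{C}_\rho$ separately for the two parameters, relying on \cref{eq:inclusion} to get the inclusion $\mathcal{B}_{\alpha\textrm{-}\rho}\subseteq\mathcal{C}_\rho$ for free in both cases, so that only the reverse inclusion $\mathcal{C}_\rho\subseteq\mathcal{B}_{\alpha\textrm{-}\rho}$ needs work. First I would treat the \textbf{order}. Here $\mathcal{F}_G = \big\{\{V(G)\}\big\}$ for every $G$, so for any annotated graph parameter $\eta$ and any $\opt$, the hyperparameter value is simply $\eta(G,V(G))$; in particular $\rho(G) = |V(G)| = n(G)$ and $\alpha\textrm{-}\rho(G) = \alpha(G[V(G)]) = \alpha(G)$. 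Thus a class $\mathcal{G}$ has bounded $\alpha\textrm{-}\rho$ iff it has bounded independence number. Now suppose $\mathcal{G}\in\mathcal{C}_\rho$, i.e.\ there is a binding function $f$ with $n(G')\le f(\omega(G'))$ for every induced subgraph $G'$ of every $G\in\mathcal{G}$. Then for any $G'$ with an independent set of size $t$, the induced subgraph on that set has $t$ vertices, clique number $1$, and is an induced subgraph of $G$, so $t\le f(1)$; hence $\alpha(G')\le f(1)$ for all such $G'$, i.e.\ $\mathcal{G}$ has bounded $\alpha\textrm{-}\rho$. This gives $\mathcal{C}_\rho\subseteq\mathcal{B}_{\alpha\textrm{-}\rho}$, completing the order case.

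Next I would treat the \textbf{independence number}, with $\opt=\minmax$ and $\mathcal{F}_G = \big\{\{X\colon X\text{ independent in }G\}\big\}$. The family $\mathcal{F}_G$ consists of a single hypergraph, whose hyperedges are all independent sets; so $\rho(G) = \max_X |X|$ over independent sets $X$, which is $\alpha(G)$, confirming the canonical hyperparameterisation. For the $\alpha$-variant, $\alpha\textrm{-}\rho(G) = \max_X \alpha(G[X])$ over independent sets $X$; but an independent set induces an edgeless graph, so $\alpha(G[X]) = |X|$, and therefore $\alpha\textrm{-}\rho(G) = \alpha(G) = \rho(G)$ for every graph $G$. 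Consequently $\mathcal{B}_{\alpha\textrm{-}\rho} = \mathcal{B}_\rho$. Now, exactly as in the order case, if $\mathcal{G}\in\mathcal{C}_\rho$ with binding function $f$, then every induced subgraph $G'$ satisfies $\alpha(G')\le f(\omega(G'[S]))$ where $S$ realizes a maximum independent set\,---\,more directly, the induced subgraph on a maximum independent set has clique number $1$, so $\alpha(G') = \rho(G') \le f(1)$. Hence $\mathcal{G}$ has bounded $\rho$, i.e.\ bounded $\alpha\textrm{-}\rho$, giving $\mathcal{C}_\rho\subseteq\mathcal{B}_{\alpha\textrm{-}\rho}$ and awesomeness.

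There is essentially no serious obstacle here; the only thing to be careful about is the bookkeeping around the definition of bounded $\rho$ used in this paper\,---\,namely that the bound must hold on all induced subgraphs\,---\,which is precisely what makes the argument "a maximum independent set induces a subgraph of clique number $1$" legitimate: that induced subgraph is itself a member of the relevant family over which boundedness is quantified. I would state both computations ($\rho(G)=n(G)$, $\alpha\textrm{-}\rho(G)=\alpha(G)$ for order; $\rho(G)=\alpha\textrm{-}\rho(G)=\alpha(G)$ for the independence number) explicitly, then invoke the one-line Ramsey-free argument that clique-boundedness evaluated at $\omega=1$ already forces a constant bound on $\alpha$, and conclude by combining with \cref{eq:inclusion}. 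Optionally I would remark that this also recovers the first bullet of the preceding list (the $\alpha$-variant of order or of independence number is the independence number), making the two cases formally one.
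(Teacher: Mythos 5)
Your proof is correct and takes essentially the same approach as the paper: both reduce to the observation that the $\alpha$-variant of either parameter is the independence number, and that a binding function evaluated on edgeless induced subgraphs (clique number $1$) forces a constant bound on $\alpha$. The paper phrases this step contrapositively (unbounded independence number yields arbitrarily large edgeless induced subgraphs, contradicting clique-boundedness) and folds the order case into the independence-number case, but the underlying argument is identical to yours.
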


\begin{proof}
Note that the independence variant of either the order or the independence number is the independence number.
Hence, it suffices to show that for every hereditary graph class that has clique-bounded order (resp., independence number), the independence number is bounded.
In fact, since  clique-bounded order implies  clique-bounded independence number, it suffices to show that for every hereditary graph class that has clique-bounded independence number, the independence number is bounded.
But this follows from the fact that if a hereditary graph class $\mathcal{G}$ has unbounded independence number, then $\mathcal{G}$ contains arbitrarily large edgeless graphs, and, hence, cannot have  clique-bounded independence number.
\end{proof}

Recall that we identify the maximum degree graph parameter with its canonical hyperparameterisation given by $\opt = \minmax$ and $\mathcal{F}_G = \big\{\{N_G(v) \colon v\in V(G)\}\big\}$.

\begin{proposition}\label{prop:Delta-is-awesome}
The maximum degree is an awesome graph parameter.
\end{proposition}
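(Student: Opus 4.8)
The plan is to split the equality $\mathcal{B}_{\alpha\text{-}\Delta}=\mathcal{C}_\Delta$ into its two inclusions, one of which is already available. Since the canonical hyperparameterisation of the maximum degree is basic, \cref{prop:main} gives $\mathcal{B}_{\alpha\text{-}\Delta}\subseteq\mathcal{C}_\Delta$ for free. So the only thing to prove is the reverse inclusion $\mathcal{C}_\Delta\subseteq\mathcal{B}_{\alpha\text{-}\Delta}$; equivalently (as noted in \cref{sec:antiRamsey}), that clique-boundedness of $\Delta$ implies boundedness of its independence variant.

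First I would unwind the relevant definitions. Under the canonical hyperparameterisation, $\opt=\minmax$ and $\mathcal{F}_G=\big\{\{N_G(v)\colon v\in V(G)\}\big\}$, so that $\alpha\text{-}\Delta(G)=\max_{v\in V(G)}\alpha(G[N_G(v)])$, i.e.\ the largest independence number of a vertex neighbourhood in $G$ (equivalently, the largest $t$ such that $G$ contains $K_{1,t}$ as an induced subgraph). I would also record that $\alpha\text{-}\Delta$ is monotone under induced subgraphs: if $H$ is an induced subgraph of $G$ and $v\in V(H)$, then $H[N_H(v)]$ is an induced subgraph of $G[N_G(v)]$, so $\alpha(H[N_H(v)])\le \alpha(G[N_G(v)])$. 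Consequently, $\mathcal{G}$ has bounded $\alpha\text{-}\Delta$ if and only if $\alpha\text{-}\Delta(G)$ is bounded by a single constant over all $G\in\mathcal{G}$.

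The main step is then a short contradiction argument. Assume $\mathcal{G}\in\mathcal{C}_\Delta$ with binding function $f$, so $\Delta(G')\le f(\omega(G'))$ for every $G\in\mathcal{G}$ and every induced subgraph $G'$ of $G$, and suppose for contradiction that $\mathcal{G}$ has unbounded $\alpha\text{-}\Delta$. Then, using monotonicity, for each $n\ge 1$ there is a graph $G_n\in\mathcal{G}$ and a vertex $v_n\in V(G_n)$ whose neighbourhood contains an independent set $I_n$ of size $n$. The subgraph $G_n[\{v_n\}\cup I_n]$ is then an induced copy of the star $K_{1,n}$, which satisfies $\omega(K_{1,n})=2$ and $\Delta(K_{1,n})=n$. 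Applying the binding inequality to this induced subgraph yields $n\le f(2)$ for every $n\ge 1$, contradicting that $f(2)$ is a fixed integer. Hence $\mathcal{G}$ has bounded $\alpha\text{-}\Delta$, which gives $\mathcal{C}_\Delta\subseteq\mathcal{B}_{\alpha\text{-}\Delta}$; together with \cref{prop:main} this proves $\mathcal{B}_{\alpha\text{-}\Delta}=\mathcal{C}_\Delta$, i.e.\ that $\Delta$ is awesome.

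I do not expect a genuine obstacle here: the entire content is the observation that a vertex with a large independent neighbourhood already forces a large induced star, which is an obstruction of bounded (indeed, constant) clique number. The only point requiring a little care is the bookkeeping with the "induced subgraph" quantifiers in the definitions of bounded and clique-bounded parameters, but the monotonicity of both $\Delta$ and $\alpha\text{-}\Delta$ under induced subgraphs makes this routine, and hereditariness of $\mathcal{G}$ is not needed.
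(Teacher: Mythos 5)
Your proof is correct and uses essentially the same key idea as the paper: a large independent set in a neighbourhood yields an induced star $K_{1,n}$, whose clique number is $2$ and whose maximum degree is $n$, forcing $n\le f(2)$. The only cosmetic difference is that the paper first reduces to hereditary classes and places the star inside $\mathcal{G}$, whereas you apply the binding inequality directly to the star as an induced subgraph (which the definition of clique-boundedness permits), correctly observing that hereditariness is then not needed.
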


\begin{proof}
It suffices to show that for every hereditary graph class that has clique-bounded maximum degree, the independence variant of the maximum degree is also bounded.
Let $\mathcal{G}$ be a hereditary graph class that has $\omega$-bounded $\Delta$, and let $f$ be a nondecreasing function such that $\Delta(G)\le f(\omega(G))$ for all $G\in \mathcal{G}$.
We claim that $\mathcal{G}$ has $\alpha\text{-}\Delta$ bounded by $f(2)$, that is, that $\alpha\text{-}\Delta(G)\le f(2)$ for all $G\in \mathcal{G}$.
Let $G$ be an arbitrary graph in $\mathcal{G}$.
The inequality $\alpha\text{-}\Delta(G)\le f(2)$ is equivalent to the statement that for all vertices $v\in V(G)$, we have $\alpha(G[N(v)])\le f(2)$, or, equivalently, for all vertices $v\in V(G)$ and all independent sets $I$ in the subgraph of $G$ induced by $N(v)$, we have $|I|\le f(2)$.
Fix a vertex $v\in V(G)$ and an independent set $I$ in the subgraph of $G$ induced by $N(v)$.
Note that the subgraph $G'$ of $G$ induced by $\{v\}\cup I$ is isomorphic to the star graph $K_{1,p}$, where $p = |I|$.
Since $\mathcal{G}$ is hereditary, the graph $G'$ belongs to $\mathcal{G}$ and consequently $\Delta(G')\le f(\omega(G'))\le f(2)$, where the right inequality follows from the facts that $\omega(G')\le 2$ and $f$ is nondecreasing.
Since $\Delta(G') = p$, this shows that $p\le f(2)$ and completes the proof.
\end{proof}

Another simple reason for a graph parameter to be awesome is that the independence variant of the parameter is bounded for all graphs.

\begin{lemma}\label{lem:bounded-alpha-properties}
Let $\rho = (\mathtt{card}, \opt, \mathcal{F})$ be a basic hyperparameter such that there exists an integer $k$ such that one of the following holds:
\begin{itemize}
    \item $\mathrm{opt}= \mathrm{minmax}$ and for every graph $G$, there exists a hypergraph $H\in \mathcal{F}_G$ such that for every hyperedge $X\in H$, we have $\alpha(G[X])\le k$, or
    \item $\mathrm{opt}= \mathrm{maxmin}$ and for every graph $G$ and every hypergraph $H\in \mathcal{F}_G$, there exists a hyperedge $X\in H$ such that $\alpha(G[X])\le k$.    
\end{itemize}
Then, $\rho$ is awesome.
\end{lemma}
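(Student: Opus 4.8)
The plan is to observe that the hypothesis is strong enough to bound the independence variant $\alpha\text{-}\rho$ \emph{uniformly over all graphs} by the constant $k$; once this is established, the class property $\mathcal{B}_{\alpha\text{-}\rho}$ becomes the family of all graph classes, and awesomeness follows immediately together with the inclusion \eqref{eq:inclusion} from \cref{prop:main}.

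First I would verify the uniform bound, treating the two cases of the hypothesis separately. Recall that $\alpha\text{-}\rho$ is the hyperparameter $(\alpha,\opt,\mathcal{F})$, so $\alpha\text{-}\rho(G) = \alpha\text{-}\opt(G,\mathcal{F}_G)$, where $\alpha(G,X)=\alpha(G[X])$. If $\opt=\minmax$, fix an arbitrary graph $G$ and take a hypergraph $H\in\mathcal{F}_G$ as guaranteed by the hypothesis, so that $\alpha(G[X])\le k$ for every $X\in H$; then $\max_{X\in H}\alpha(G[X])\le k$, and since $\alpha\text{-}\rho(G)=\min_{H'\in\mathcal{F}_G}\max_{X\in H'}\alpha(G[X])$, we get $\alpha\text{-}\rho(G)\le k$. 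If $\opt=\maxmin$, fix $G$ and an arbitrary hypergraph $H\in\mathcal{F}_G$; by hypothesis there is some $X\in H$ with $\alpha(G[X])\le k$, hence $\min_{X'\in H}\alpha(G[X'])\le k$, and since this holds for every $H\in\mathcal{F}_G$ we conclude $\alpha\text{-}\rho(G)=\max_{H\in\mathcal{F}_G}\min_{X\in H}\alpha(G[X])\le k$. In either case, $\alpha\text{-}\rho(G)\le k$ for every graph $G$.

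Next I would translate this into the language of class properties. Since $\alpha\text{-}\rho(G')\le k$ for \emph{every} graph $G'$, the constant $c=k$ witnesses that every graph class $\mathcal{G}$ has bounded $\alpha\text{-}\rho$; that is, $\mathcal{B}_{\alpha\text{-}\rho}$ is the family of all graph classes, and in particular $\mathcal{C}_\rho\subseteq\mathcal{B}_{\alpha\text{-}\rho}$. By \cref{prop:main} (equivalently, by the inclusion~\eqref{eq:inclusion}), we also have $\mathcal{B}_{\alpha\text{-}\rho}\subseteq\mathcal{C}_\rho$. Hence $\mathcal{B}_{\alpha\text{-}\rho}=\mathcal{C}_\rho$, i.e., $\rho$ is awesome.

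There is essentially no obstacle here: the only points requiring a little care are matching the two forms of the hypothesis to the two definitions of $\alpha\text{-}\opt$, and observing that ``bounded $\alpha\text{-}\rho$'' imposes nothing beyond a uniform numerical bound on $\alpha\text{-}\rho$, so one need not invoke heredity or pass to induced subgraphs. In effect, the lemma records the remark that a parameter whose independence variant is identically bounded is awesome for trivial reasons.
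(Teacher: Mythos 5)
Your proposal is correct and follows essentially the same route as the paper: establish that the hypothesis forces $\alpha\text{-}\rho(G)\le k$ uniformly for all graphs, so every graph class has bounded $\alpha\text{-}\rho$, and then invoke \cref{prop:main} to conclude that clique-boundedness of $\rho$ also holds universally, giving the equivalence. The only difference is that you spell out the case analysis for $\minmax$ versus $\maxmin$, which the paper leaves implicit.
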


\begin{proof}
The assumptions on $\rho$ imply that $\alpha\text{-}\rho(G)\le k$ for every graph $G$, implying, in particular, that the class of all graphs has bounded $\alpha\text{-}\rho$.
Hence, every graph class has bounded $\alpha\text{-}\rho$ and, consequently, by \cref{prop:main}, every graph class has clique-bounded $\rho$.
Thus, in this case, clique-boundedness of $\rho$ is equivalent to boundedness of $\alpha\text{-}\rho$.
\end{proof}

\begin{corollary}\label{cor:bounded-alpha-properties}
Let $\rho = (\mathtt{card}, \opt, \mathcal{F})$ be a basic hyperparameter such that, for some integer $k$, every graph $G$, every hypergraph $H\in \mathcal{F}_G$, and every hyperedge $X\in H$ satisfy $\alpha(G[X])\le k$.
Then, $\rho$ is awesome.
\end{corollary}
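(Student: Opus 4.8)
The statement to prove is \Cref{cor:bounded-alpha-properties}, which is an immediate specialisation of \Cref{lem:bounded-alpha-properties}.

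The plan is to simply observe that the hypothesis of \Cref{cor:bounded-alpha-properties} is a strengthening of both alternatives appearing in \Cref{lem:bounded-alpha-properties}. First I would note that since $\mathcal{F}_G$ is, by definition of a hypermapping, a nonempty family of hypergraphs over $G$, and every hypergraph in it is a nonempty set of subsets of $V(G)$, we can always pick at least one hypergraph $H \in \mathcal{F}_G$. The corollary's assumption says that \emph{every} hyperedge $X$ of \emph{every} such $H$ satisfies $\alpha(G[X]) \le k$. In the case $\mathrm{opt} = \mathrm{minmax}$, this trivially yields the existence of a hypergraph $H \in \mathcal{F}_G$ all of whose hyperedges $X$ satisfy $\alpha(G[X]) \le k$ — indeed, any $H \in \mathcal{F}_G$ works. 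In the case $\mathrm{opt} = \mathrm{maxmin}$, for every $H \in \mathcal{F}_G$ we need a single hyperedge $X \in H$ with $\alpha(G[X]) \le k$; again, any hyperedge works, and such a hyperedge exists because $H$ is nonempty. Hence in either case the corresponding hypothesis of \Cref{lem:bounded-alpha-properties} is met.

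Then I would conclude by invoking \Cref{lem:bounded-alpha-properties} directly: since $\rho$ satisfies (either of) the conditions of that lemma with the same integer $k$, the lemma gives that $\rho$ is awesome, which is exactly the claim. There is no real obstacle here; the only thing to be mildly careful about is checking that the quantifier structure of the corollary's hypothesis genuinely implies \emph{both} bullet conditions of the lemma (it implies the $\mathrm{minmax}$ one because the chosen witness hypergraph can be arbitrary, and the $\mathrm{maxmin}$ one because the chosen witness hyperedge can be arbitrary), and that hypergraphs in $\mathcal{F}_G$ are nonempty so that a witness hyperedge exists in the $\mathrm{maxmin}$ case. I will spell this out in one or two sentences and then apply the lemma.

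\begin{proof}
By the definition of a hypermapping, for every graph $G$ the family $\mathcal{F}_G$ is nonempty, and by the definition of a hypergraph over $G$, each $H\in \mathcal{F}_G$ is a nonempty set of subsets of $V(G)$.
Under the hypothesis of the corollary, every hyperedge of every hypergraph in $\mathcal{F}_G$ induces a subgraph of $G$ with independence number at most $k$.
In particular, if $\opt = \minmax$, then \textsl{any} hypergraph $H\in \mathcal{F}_G$ witnesses the first condition of \Cref{lem:bounded-alpha-properties}, since all of its hyperedges $X$ satisfy $\alpha(G[X])\le k$.
Similarly, if $\opt = \maxmin$, then for every $H\in \mathcal{F}_G$, picking \textsl{any} hyperedge $X\in H$ (which exists since $H$ is nonempty) gives $\alpha(G[X])\le k$, so the second condition of \Cref{lem:bounded-alpha-properties} holds.
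In either case, the assumptions of \Cref{lem:bounded-alpha-properties} are satisfied with the same integer $k$, and therefore $\rho$ is awesome.
\end{proof}
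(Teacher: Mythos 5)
Your proof is correct and follows exactly the route the paper intends: the corollary is stated without proof precisely because its hypothesis trivially implies whichever bullet condition of \Cref{lem:bounded-alpha-properties} applies, which is what you verify. Your extra care about nonemptiness of $\mathcal{F}_G$ and of each $H$ is a harmless (and accurate) elaboration of the same argument.
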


\Cref{cor:bounded-alpha-properties} implies that the clique number is awesome, and, more generally, so is the parameter defined as the maximum number of vertices in an induced subgraph with independence number at most~$k$, for any fixed $k\in \mathbb{N}$.

\begin{sloppypar}
\subsection{Modulators}
\end{sloppypar}

For a graph parameter $\rho$, an integer $c$, and a graph $G$, a set $S\subseteq V(G)$ is a \emph{$(\rho,c)$-modulator} if $\rho(G-S)\le c$.
We denote by $\rhocm(G)$ the \emph{$(\rho,c)$-modulator number} of $G$, defined as the minimum cardinality of a $(\rho,c)$-modulator in $G$.
For example, if $\rho$ is the treewidth, then we obtain the notion of $(\tw,c)$-modulators (see, e.g.,~\cite{DBLP:conf/focs/FominLMS12,DBLP:conf/icalp/LokshtanovR0SZ18,MR3149081,MR3333298}).
In particular, $(\tw,1)$-modulators are precisely the vertex covers of $G$, while $(\tw,2)$-modulators are precisely the feedback vertex sets.\footnote{Recall that our definition of treewidth disregards the usual ``$-1$''.}
As another example, if $\rho$ is the chromatic number $\chi$, then $(\chi,2)$-modulators are precisely the odd cycle transversals.

Note that for every graph parameter $\rho$ and an integer $c$, the $(\rho,c)$-modulator number satisfies the conditions of \cref{hyperpar-optimisation}, hence, it admits a canonical hyperparameterisation and corresponds to a basic hyperparameter.
This means for every annotated graph parameter $\lambda$, the $\lambda$-variant of the $(\rho,c)$-modulator number, denoted $\lambda\text{-}\rhocm$, is well-defined: Given a graph $G$, the value of $\lambda\text{-}\rhocm(G)$ equals the minimum value of $\lambda(G,S)$, over all $(\rho,c)$-modulators $S$ of $G$.
In particular, the independence variant of $\rhocm$ is well-defined and one can ask if $\rhocm$ is awesome.\\

\subsubsection{Heaviness} 

In this section, we identify a sufficient condition for a $(\rho,c)$-modulator number to be awesome (\Cref{modulator is awesome}).
The main ingredient for the proof is \cref{lem:tau-k-omega-alpha}, which is a consequence of the following more general lemma that we apply also to answer a question from \cite{HMV25} (see \Cref{prop:OK-free}). 
   
\begin{lemma}\label{lem:tau-k-omega-alpha-general}
    Let $\rho$ be a graph parameter such that there exists a nondecreasing function $h$ such that $\omega(G) \le h(\rho(G))$ holds for all graphs $G$.
    Let $g : \mathbb{N} \rightarrow \mathbb{N}$ be a nondecreasing function, $c$ be an integer, and let $\mathcal{G}$ be a hereditary graph class such that there exists a nondecreasing function $f : \mathbb{N} \rightarrow \mathbb{N}$ such that $\rhocm(G)\le f(\omega(G))\cdot g(|V(G)|)$ holds for all graphs $G\in \mathcal{G}$.
    Then, for every graph $G\in \mathcal{G}$, every minimum $(\rho, c)$-modulator $S$ satisfies $\alpha(G[S])\le f(h(c)+1)\cdot g(|V(G)|)$.
\end{lemma}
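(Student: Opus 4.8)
The plan is to argue by contradiction, exploiting the interplay between the Ramsey-type lower bound on $\omega$ and the minimality of the $(\rho,c)$-modulator. Let $G\in\mathcal{G}$, let $S$ be a minimum $(\rho,c)$-modulator of $G$, and suppose for contradiction that $\alpha(G[S])> f(h(c)+1)\cdot g(|V(G)|)$. The key observation is that $S$ being a \emph{minimum} modulator forces every vertex $v\in S$ to be ``needed'': the set $S\setminus\{v\}$ is not a $(\rho,c)$-modulator, i.e.\ $\rho(G-(S\setminus\{v\}))>c$. Since removing a single vertex from $G-(S\setminus\{v\})$ (namely $v$) yields $G-S$, whose $\rho$-value is at most $c$, this says that $v$ is somehow responsible for pushing $\rho$ above $c$.

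First I would take a maximum independent set $I\subseteq S$ in $G[S]$, so $|I|=\alpha(G[S])$, and consider the induced subgraph $G' = G[(V(G)\setminus S)\cup I]$. Here $G'-I = G-S$ has $\rho(G-S)\le c$, so $I$ is a $(\rho,c)$-modulator of $G'$. I claim $I$ is in fact a \emph{minimum} $(\rho,c)$-modulator of $G'$, or at least that I can extract from $I$ a minimum one of size close to $|I|$ — this needs a short argument using minimality of $S$ in $G$ together with the fact that deleting vertices outside $S$ cannot help (deleting a vertex of $V(G)\setminus S$ from $G$ leaves $S$ a modulator of a smaller graph, and $\rho$ is monotone under induced subgraphs, so a minimum modulator of $G'$ has size at least... ) — this is the place where I expect to have to be careful, and it is the main obstacle: pinning down exactly why the hypothesis $\rhocm(G')\le f(\omega(G'))\cdot g(|V(G')|)$, applied to $G'\in\mathcal{G}$, gives a contradiction. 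Since $G'$ is an induced subgraph of $G$ and $\mathcal{G}$ is hereditary, $G'\in\mathcal{G}$, and $|V(G')|\le |V(G)|$, so $g(|V(G')|)\le g(|V(G)|)$ by monotonicity of $g$.

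Now I bound $\omega(G')$. The vertices of $I$ form an independent set, so any clique $K$ of $G'$ contains at most one vertex of $I$; thus $K\setminus I$ is a clique in $G'-I=G-S$, giving $\omega(G-S)\ge \omega(G')-1$. By the hypothesis on $\rho$, $\omega(G-S)\le h(\rho(G-S))\le h(c)$ since $h$ is nondecreasing and $\rho(G-S)\le c$. Hence $\omega(G')\le h(c)+1$. Combining with the assumed bound applied to $G'$: the minimum $(\rho,c)$-modulator number of $G'$ satisfies $\rhocm(G')\le f(\omega(G'))\cdot g(|V(G')|)\le f(h(c)+1)\cdot g(|V(G)|)$, using that $f$ is nondecreasing. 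On the other hand, once I have established that $I$ yields (or contains, after pruning down, no smaller) a minimum modulator of $G'$ of size $\ge \alpha(G[S])$, I get $\rhocm(G')\ge\alpha(G[S])> f(h(c)+1)\cdot g(|V(G)|)$, a contradiction. The only real work is the pruning step in the previous paragraph: showing that no $(\rho,c)$-modulator of $G'$ is strictly smaller than $|I|$, which I expect to follow because any $(\rho,c)$-modulator $T$ of $G'$ with $|T|<|I|$ could be combined with $S\setminus I$ to build a $(\rho,c)$-modulator of $G$ of size $<|S|$ (as $G-((S\setminus I)\cup T)$ is an induced subgraph of $G'-T$, hence has $\rho\le c$ by monotonicity), contradicting minimality of $S$.
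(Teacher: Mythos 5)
Your proposal is correct and follows essentially the same route as the paper: pass to $G' = G[(V(G)\setminus S)\cup I]$ for a large independent set $I\subseteq S$, bound $\omega(G')\le h(c)+1$, apply the hypothesis to $G'\in\mathcal{G}$ (using heredity and monotonicity of $f$ and $g$), and combine a small $(\rho,c)$-modulator $T$ of $G'$ with $S\setminus I$ to contradict the minimality of $S$. The only wrinkle is your appeal to monotonicity of $\rho$ under induced subgraphs (which is not a hypothesis of the lemma) in the final pruning step; it is unnecessary, since $G-((S\setminus I)\cup T)$ is not merely an induced subgraph of $G'-T$ but actually equal to it, which is exactly how the paper closes the argument.
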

\begin{proof}
    Let $G\in \mathcal{G}$ and let $S$ be a minimum $(\rho,c)$-modulator in $G$.
    Suppose for a contradiction that $\alpha(G[S])> f(h(c)+1) \cdot g(|V(G)|)$.
    Then, $G$ has an independent set $I$ such that $I\subseteq S$ and $|I|>f(h(c)+1) \cdot g(|V(G)|)$.
    Let $G'$ be the subgraph of $G$ induced by $(V(G)\setminus S)\cup I$.
    Since $\rho(G-S)\le c$ and $h$ is nondecreasing, the clique number of the graph $G-S$ is at most $h(c)$.
    Consequently, $\omega(G')\le \omega(G-S) + \omega(G[I])\le h(c)+1$.
    Since the class $\mathcal{G}$ is hereditary, the graph $G'$ belongs to $\mathcal{G}$. 
    It follows that $\rhocm(G')\le f(\omega(G')) \cdot g(|V(G')|) \le f(h(c)+1) \cdot g(|V(G)|)$, where the last inequality follows from the inequality $\omega(G')\le h(c)+1$ and the fact that both $f$ and $g$ are nondecreasing.
    Let $S'$ be a $(\rho, c)$-modulator in $G'$ such that $|S'|\le f(h(c)+1) \cdot g(|V(G)|)$.
    Note that 
    \[
        |S'|\le f(h(c)+1) \cdot g(|V(G)|)<|I|\,,
    \] 
    which implies 
    \[
        |V(G'-S')| = |V(G-S)|+|I|-|S'| > |V(G-S)|\,.
    \]
    Since $S'$ is a $(\rho, c)$-modulator in $G'$, the graph $G'-S'$ satisfies $\rho(G'-S')\le c$.
    It follows that the set $\widehat{S}:= V(G)\setminus V(G'-S')$ is a $(\rho, c)$-modulator in $G$.
    Its cardinality satisfies
    \[
        |\widehat{S}| = |V(G)|-|V(G'-S')| < |V(G)|-|V(G-S)| = |S|\,,
    \]
    contradicting the fact that $S$ is a minimum $(\rho,c)$-modulator in $G$.
\end{proof}

Setting $g$ to be the constant function 1 in \cref{lem:tau-k-omega-alpha-general} yields the following corollary.

\begin{corollary}\label{lem:tau-k-omega-alpha}
Let $\rho$ be a graph parameter such that there exists a nondecreasing function $h$ such that $\omega(G) \le h(\rho(G))$ holds for all graphs $G$.
Let $c$ be an integer and $\mathcal{G}$ be a hereditary graph class that has clique-bounded $\rhocm$ and let $f : \mathbb{N} \rightarrow \mathbb{N}$ be a nondecreasing function such that $\rhocm(G)\le f(\omega(G))$ holds for all $G\in \mathcal{G}$.
Then, for every graph $G\in \mathcal{G}$, every minimum $(\rho, c)$-modulator $S$ satisfies $\alpha(G[S])\le f(h(c)+1)$.
\end{corollary}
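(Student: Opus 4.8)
The statement is an immediate specialisation of \Cref{lem:tau-k-omega-alpha-general}, so the plan is simply to instantiate that lemma with the right auxiliary function. Concretely, I would set $g : \mathbb{N} \to \mathbb{N}$ to be the constant function $g(n) = 1$ for all $n \in \mathbb{N}$, which is trivially nondecreasing and therefore satisfies the hypothesis on $g$ in \Cref{lem:tau-k-omega-alpha-general}.

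The next step is to check that the remaining hypotheses of \Cref{lem:tau-k-omega-alpha-general} hold. The hypothesis on $\rho$ (existence of a nondecreasing $h$ with $\omega(G) \le h(\rho(G))$ for all graphs $G$) is exactly one of the hypotheses of the corollary, so it carries over verbatim. For the requirement that there be a nondecreasing $f : \mathbb{N} \to \mathbb{N}$ with $\rhocm(G) \le f(\omega(G)) \cdot g(|V(G)|)$ for all $G \in \mathcal{G}$: by assumption $\mathcal{G}$ has clique-bounded $\rhocm$ with binding function $f$, i.e. $\rhocm(G) \le f(\omega(G))$ for all $G \in \mathcal{G}$, and since $g \equiv 1$ we have $f(\omega(G)) = f(\omega(G)) \cdot g(|V(G)|)$, so the required inequality holds with this same $f$. (If one wants $f$ nondecreasing one may replace it by $p \mapsto \max_{q \le p} f(q)$ without loss of generality, but the statement already hands us a nondecreasing $f$.)

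Finally, applying the conclusion of \Cref{lem:tau-k-omega-alpha-general} with these choices: for every $G \in \mathcal{G}$ and every minimum $(\rho, c)$-modulator $S$ of $G$, we get $\alpha(G[S]) \le f(h(c)+1) \cdot g(|V(G)|)$, and since $g(|V(G)|) = 1$ this is exactly $\alpha(G[S]) \le f(h(c)+1)$, as desired.

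There is essentially no obstacle here; the only thing to be careful about is matching the bookkeeping of the two binding functions (the $f$ of the corollary plays the role of the $f$ of the lemma, and $g$ is forced to be constant $1$), and noting that $g \equiv 1$ collapses the product $f(\cdot)\cdot g(\cdot)$ to $f(\cdot)$ both in the hypothesis and in the conclusion.
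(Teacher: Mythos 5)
Your proposal is correct and is exactly the paper's own derivation: the paper states that setting $g$ to be the constant function $1$ in \cref{lem:tau-k-omega-alpha-general} yields the corollary, which is precisely the instantiation you carry out. Nothing further is needed.
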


A graph parameter $\rho$ is said to be \emph{heavy} if it is monotone under induced subgraphs and unbounded on the class of complete graphs.

\begin{theorem}\label{modulator is awesome}
For every heavy graph parameter $\rho$ and integer $c$, the $(\rho,c)$-modulator number $\rhocm$ is awesome.
\end{theorem}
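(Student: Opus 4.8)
The plan is to establish the equivalence $\mathcal{B}_{\alpha\text{-}\rhocm} = \mathcal{C}_{\rhocm}$. Since $\rhocm$ is a basic hyperparameter (it satisfies the conditions of \Cref{hyperpar-optimisation}), \Cref{prop:main} already gives the inclusion $\mathcal{B}_{\alpha\text{-}\rhocm} \subseteq \mathcal{C}_{\rhocm}$, so by \cref{eq:inclusion} and the discussion following it, it suffices to prove that clique-boundedness of $\rhocm$ implies boundedness of $\alpha\text{-}\rhocm$, that is, $\mathcal{C}_{\rhocm} \subseteq \mathcal{B}_{\alpha\text{-}\rhocm}$. This will be an almost immediate consequence of \Cref{lem:tau-k-omega-alpha}, once I check that its hypothesis --- the existence of a nondecreasing function $h$ with $\omega(G) \le h(\rho(G))$ for all graphs $G$ --- follows from heaviness of $\rho$.

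First I would extract the required $h$ from heaviness. Because $\rho$ is monotone under induced subgraphs and $K_{m'}$ is an induced subgraph of $K_m$ whenever $m' \le m$, the sequence $(\rho(K_m))_{m \ge 1}$ is nondecreasing; because $\rho$ is unbounded on the class of complete graphs, this sequence is also unbounded, hence for every integer $j$ the set $\{m \in \mathbb{N} : \rho(K_m) \le j\}$ is finite. Thus $h(j) := \max\bigl(\{1\} \cup \{m \in \mathbb{N} : \rho(K_m) \le j\}\bigr)$ defines a nondecreasing function with $h(j) \ge 1$ for every $j$. For an arbitrary graph $G$, writing $q = \omega(G)$ and noting that $K_q$ is an induced subgraph of $G$, monotonicity yields $\rho(K_q) \le \rho(G)$, so $q$ belongs to the set defining $h(\rho(G))$ and therefore $\omega(G) = q \le h(\rho(G))$, as needed.

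Next I would fix a hereditary graph class $\mathcal{G}$ with clique-bounded $\rhocm$ and a nondecreasing binding function $f : \mathbb{N} \to \mathbb{N}$ with $\rhocm(G) \le f(\omega(G))$ for all $G \in \mathcal{G}$; in particular every $G \in \mathcal{G}$ has a finite, hence a minimum, $(\rho, c)$-modulator. Applying \Cref{lem:tau-k-omega-alpha} to $\rho$, $h$, $c$, $\mathcal{G}$, and $f$, every minimum $(\rho, c)$-modulator $S$ of any $G \in \mathcal{G}$ satisfies $\alpha(G[S]) \le f(h(c) + 1)$. Since $\alpha\text{-}\rhocm(G)$ is, by definition, the minimum of $\alpha(G[S])$ over all $(\rho, c)$-modulators $S$ of $G$, it follows that $\alpha\text{-}\rhocm(G) \le f(h(c) + 1)$ for every $G \in \mathcal{G}$. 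The right-hand side is a constant independent of $G$, so $\mathcal{G}$ has bounded $\alpha\text{-}\rhocm$; this proves $\mathcal{C}_{\rhocm} \subseteq \mathcal{B}_{\alpha\text{-}\rhocm}$ and hence that $\rhocm$ is awesome.

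The only delicate point is the first step: one must make sure $h$ is everywhere defined and integer-valued (handled by adjoining $\{1\}$ before taking the maximum), and observe that both clauses of heaviness are genuinely used --- monotonicity to compare $\rho$ on a maximum clique of $G$ with $\rho(G)$ and to see that $(\rho(K_m))_m$ is nondecreasing, and unboundedness on complete graphs to guarantee that the sets defining $h$ are finite. After that, the statement is a direct invocation of \Cref{lem:tau-k-omega-alpha}, so I do not expect any further obstacle.
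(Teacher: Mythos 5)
Your proposal is correct and follows essentially the same route as the paper: reduce to showing $\mathcal{C}_{\rhocm}\subseteq\mathcal{B}_{\alpha\text{-}\rhocm}$, extract from heaviness a nondecreasing $h$ with $\omega(G)\le h(\rho(G))$, and invoke \Cref{lem:tau-k-omega-alpha}. The only cosmetic difference is that you define $h$ directly via the sequence $(\rho(K_m))_m$, whereas the paper takes $h(x)$ to be the maximum clique number over the hereditary class $\{G:\rho(G)\le x\}$; both constructions use exactly the same two ingredients of heaviness.
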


\begin{proof}
Let $c$ and $\rho$ be as in the statement.
For an integer $x$, let $\mathcal{C}_x$ denote the class of all graphs $G$ such that $\rho(G)\le x$.
We first show that the clique number of graphs in $\mathcal{C}_x$ is bounded by some function of $x$.
Suppose this is not the case. 
Then there exists some integer $x$ such that graphs in $\mathcal{C}_x$ have arbitrarily large clique numbers.
Note that the class $\mathcal{C}_x$ is hereditary since $\rho$ is monotone under induced subgraphs. 
Hence, the class $\mathcal{C}_x$ contains arbitrarily large complete graphs, implying that $\rho$ is unbounded on $\mathcal{C}_x$, a contradiction.

Since the clique number of graphs in $\mathcal{C}_x$ is bounded by some function of $x$, we can now define a function $h$ that allows us to apply \cref{lem:tau-k-omega-alpha}: for every integer $x$, we define the value of $h(x)$ to be the maximum clique number of all graphs in $\mathcal{C}_x$ (and $0$ if $\mathcal{C}_x$ is empty).
Since $x\le y$ implies $\mathcal{C}_x\subseteq\mathcal{C}_y$, the function $h$ is nondecreasing.
Furthermore, for every graph $G$, we have $\omega(G) \le h(\rho(G))$.

To prove the statement, it suffices to show that every hereditary graph class that has clique-bounded $\rhocm$ also has bounded $\alpha\text{-}\rhocm$.
Let $\mathcal{G}$ be a hereditary graph class that has clique-bounded $\rhocm$, and let $f$ be a nondecreasing function such that $\rhocm(G)\le f(\omega(G))$ for all $G\in \mathcal{G}$.
By \cref{lem:tau-k-omega-alpha}, for every graph $G\in \mathcal{G}$, every minimum $(\rho, c)$-modulator $S$ satisfies $\alpha(G[S])\le f(h(c)+1)$.
Hence, $\alpha\text{-}\rhocm(G)\le f(h(c)+1)$, for every graph $G\in \mathcal{G}$, in other words, $\mathcal{G}$ has bounded $\alpha\text{-}\rhocm$.
\end{proof}

As a consequence of \cref{modulator is awesome}, we get the following result, which highlights the fact that the $(\rho,c)$-modulator numbers can be awesome, although $\rho$ may not be (for instance when $\rho$ is treewidth, pathwidth, or treedepth).

\begin{corollary}\label{cor:modulator-examples}
For every $\rho\in\{\omega,\chi,\tw,\pw,\td\}$ and integer $c$, the $(\rho,c)$-modulator number $\rhocm(G)$ is an awesome graph parameter.
\end{corollary}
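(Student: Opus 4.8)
The plan is to derive the corollary directly from \Cref{modulator is awesome}. That theorem states that $\rhocm$ is awesome whenever $\rho$ is \emph{heavy}, i.e.\ monotone under induced subgraphs and unbounded on the class of complete graphs. So it suffices to check that each of $\omega$, $\chi$, $\tw$, $\pw$, and $\td$ enjoys these two properties; the corollary then follows by applying \Cref{modulator is awesome} once for each of the five parameters and each integer $c$.

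For monotonicity under induced subgraphs, let $H$ be an induced subgraph of a graph $G$. For $\omega$ and $\chi$ this is immediate: any clique of $H$ is a clique of $G$, and the restriction to $V(H)$ of a proper colouring of $G$ is a proper colouring of $H$, so $\omega(H)\le\omega(G)$ and $\chi(H)\le\chi(G)$. For $\tw$ and $\pw$, take an optimal tree (resp.\ path) decomposition of $G$ and replace each bag $B$ by $B\cap V(H)$; the result is a tree (resp.\ path) decomposition of $H$ whose width does not exceed the original, so $\tw(H)\le\tw(G)$ and $\pw(H)\le\pw(G)$. For $\td$, take an optimal treedepth decomposition $F$ of $G$ and delete the vertices of $V(G)\setminus V(H)$ one at a time, reattaching the children of each deleted non-root vertex to its parent and turning the children of each deleted root into new roots; ancestor relations among surviving vertices are preserved, so the resulting rooted forest on $V(H)$ has depth at most that of $F$ and its transitive closure still contains $H$, whence $\td(H)\le\td(G)$. (All three facts are standard.) For unboundedness on complete graphs, recall that with the conventions of \Cref{sec:prelim} one has $\omega(K_n)=\chi(K_n)=\tw(K_n)=\pw(K_n)=\td(K_n)=n$: a clique is contained in a single bag of any tree decomposition, a nested chain of bags $V(K_n)\supseteq V(K_{n-1})\supseteq\dots$ is a path decomposition witnessing $\pw(K_n)\le n$, and a path on $n$ vertices rooted at an end is a treedepth decomposition of $K_n$ of depth $n$. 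Since all these values tend to infinity with $n$, each of the five parameters is unbounded on complete graphs.

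Combining the two verifications, each $\rho\in\{\omega,\chi,\tw,\pw,\td\}$ is heavy, so \Cref{modulator is awesome} yields that $\rhocm$ is awesome for every integer $c$. There is essentially no obstacle here: the statement is a routine instantiation of \Cref{modulator is awesome}, and the only point deserving a moment's care is the monotonicity of $\tw$, $\pw$, and $\td$ under induced subgraphs, which is handled by restricting (respectively pruning) an optimal decomposition as above.
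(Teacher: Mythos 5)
Your proposal is correct and follows exactly the route the paper intends: the corollary is stated as a direct consequence of \Cref{modulator is awesome}, and the only content is the (routine) verification that each of $\omega$, $\chi$, $\tw$, $\pw$, and $\td$ is heavy, i.e.\ monotone under induced subgraphs and unbounded on complete graphs, which you carry out correctly (including the right values $\tw(K_n)=\pw(K_n)=\td(K_n)=n$ under the paper's convention for width). Nothing further is needed.
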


Several applications of \cref{cor:modulator-examples} are given in \cref{sec:applications}.
Moreover, since for any graph $G$, the $(\tw,1)$-modulators, $(\tw,2)$-modulators, and the \hbox{$(\chi,2)$-modulators} of $G$ are precisely its vertex covers,\footnote{It can also be observed that the vertex covers of $G$ are precisely its $(\td,1)$-modulators.} feedback vertex sets, and odd cycle transversals, respectively, we obtain the following consequence of \Cref{cor:modulator-examples}.

\begin{corollary}\label{cor:awesome-examples}
The vertex cover number, the feedback vertex set number, and the odd cycle transversal number are awesome graph parameters.
\end{corollary}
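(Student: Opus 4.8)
The plan is to derive the statement immediately from \Cref{cor:modulator-examples} by recognising each of the three parameters as a modulator number already covered there. The conceptual point is that awesomeness is a property of the hyperparameter $\rho = (\lambda,\opt,\mathcal{F})$ itself (its $\alpha$-variant shares the same $\opt$ and the same hypermapping $\mathcal{F}$), so it is enough to check that the canonical hyperparameterisation of each parameter in the statement coincides, as a tuple, with the canonical hyperparameterisation of an appropriate $(\rho,c)$-modulator number.

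First I would unpack the canonical hyperparameterisation of a minimisation parameter from \Cref{hyperpar-optimisation}: it is $(\mathtt{card},\maxmin,\mathcal{F})$ with $\mathcal{F}_G = \big\{\{X\subseteq V(G) : X \text{ satisfies } P \text{ in } G\}\big\}$, where $P$ is the defining property. For the vertex cover number, $P$ reads ``$X$ is a vertex cover of $G$''; for $\mu_{\tw,1}$, it reads ``$\tw(G-X)\le 1$''. Under the width convention adopted in this paper (no ``$-1$''), $\tw(H)\le 1$ holds exactly when $H$ is edgeless, so the two properties select the same subsets of $V(G)$ and the corresponding families $\mathcal{F}_G$ coincide. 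Hence the vertex cover number and $\mu_{\tw,1}$ are literally the same hyperparameter, and the former is awesome because the latter is, by \Cref{cor:modulator-examples} applied with $\rho = \tw$ and $c = 1$.

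Then I would repeat the argument twice more. A set $X$ is a feedback vertex set of $G$ iff $G-X$ is a forest iff $\tw(G-X)\le 2$ (again by the paper's convention), so the feedback vertex set number is precisely $\mu_{\tw,2}$, which is awesome by \Cref{cor:modulator-examples} with $\rho = \tw$ and $c = 2$; and $X$ is an odd cycle transversal of $G$ iff $G-X$ is bipartite iff $\chi(G-X)\le 2$, so the odd cycle transversal number is precisely $\mu_{\chi,2}$, awesome by \Cref{cor:modulator-examples} with $\rho = \chi$ and $c = 2$.

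There is no substantive obstacle in this argument, since all the real work was already carried out in \Cref{modulator is awesome} and \Cref{cor:modulator-examples}; the only point requiring (mild) care is to match the defining properties of these three transversal-type parameters to the nonstandard treewidth and chromatic thresholds correctly---e.g., using $\tw(G-X)\le 1$ rather than $\tw(G-X)\le 0$ for vertex covers---which is exactly what the observations accompanying \Cref{cor:modulator-examples} record.
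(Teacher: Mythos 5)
Your proposal is correct and follows essentially the same route as the paper, which derives the corollary from \Cref{cor:modulator-examples} by observing that the $(\tw,1)$-modulators, $(\tw,2)$-modulators, and $(\chi,2)$-modulators of a graph are precisely its vertex covers, feedback vertex sets, and odd cycle transversals, respectively. Your additional care in matching the canonical hyperparameterisations as tuples and in handling the paper's ``no $-1$'' treewidth convention is exactly the right check and is consistent with the paper's footnoted remark.
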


\subsubsection{Inheritability and strong inheritability}

We now introduce a condition on basic hyperparameters, which we call inheritability, that allows for a transfer of the class properties from the modulators of a graph parameter to the parameter itself, as we will show in the next subsection.
A~basic hyperparameter $\rho$ is said to be \emph{inheritable} if for each $\lambda\in \{\mathtt{card},\alpha\}$, every graph $G$, and every set $S\subseteq V(G)$, it holds that $\lambda\text{-}\rho(G)\le \rho(G-S) + \lambda(G,S)$.

The following structural condition is sufficient for inheritability.
A basic hyperparameter $\rho=(\mathtt{card},\opt,\mathcal{F})$ is said to be \emph{strongly inheritable} if one of the following conditions holds:
\begin{itemize}
    \item $\mathrm{opt}= \mathrm{minmax}$ and for every graph $G$, every set $S\subseteq V(G)$, and every hypergraph $H'\in \mathcal{F}_{G-S}$, there exists a hypergraph $H\in \mathcal{F}_{G}$ such that $\{X\setminus S\colon X\in H\} \subseteq H'$, that is, $X\setminus S\in H'$ for all $X\in H$;
    \item $\mathrm{opt}= \mathrm{maxmin}$ and for every graph $G$, every set $S\subseteq V(G)$, and every hypergraph $H\in \mathcal{F}_{G}$, there exists a hypergraph $H'\in \mathcal{F}_{G-S}$ such that $H'\subseteq \{Y\setminus S\colon Y\in H\}$, that is, for all $X\in H'$ there exists some $Y\in H$ such that $X = Y\setminus S$.
\end{itemize}

Modulators, for instance, give rise to strongly inheritable hyperparameters.

\begin{proposition}\label{obs:inheritable-2}
For every graph parameter $\rho$ and integer $c$, the $(\rho,c)$-modulator number $\rhocm$ is strongly inheritable.
\end{proposition}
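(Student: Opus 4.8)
The plan is to unwind the definitions and check the $\maxmin$-branch of strong inheritability directly; essentially no work is needed beyond bookkeeping. Recall from \Cref{hyperpar-optimisation} that the canonical hyperparameterisation of $\rhocm$ is the basic hyperparameter $(\mathtt{card},\maxmin,\mathcal{F})$ in which, for every graph $G$, the family $\mathcal{F}_G$ consists of the single hypergraph
\[
H_G \;=\; \{\,Y\subseteq V(G)\colon \rho(G-Y)\le c\,\}\,,
\]
i.e.\ the set of all $(\rho,c)$-modulators of $G$. Since $\opt=\maxmin$, it therefore suffices to verify the second condition in the definition of strong inheritability.

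Fix a graph $G$ and a set $S\subseteq V(G)$. Because $\mathcal{F}_G=\{H_G\}$, the only hypergraph $H\in\mathcal{F}_G$ to be considered is $H=H_G$, and the only candidate for $H'\in\mathcal{F}_{G-S}$ is the single hypergraph $H_{G-S}=\{\,X\subseteq V(G)\setminus S\colon \rho((G-S)-X)\le c\,\}$. I will show that $H'=H_{G-S}$ already works, that is, $H_{G-S}\subseteq\{\,Y\setminus S\colon Y\in H_G\,\}$. Given $X\in H_{G-S}$, the natural witness is $Y:=X\cup S$: since $X\cap S=\emptyset$ we have $Y\setminus S=X$, while the identity $G-Y=G-(X\cup S)=(G-S)-X$ yields $\rho(G-Y)=\rho((G-S)-X)\le c$, so $Y\in H_G$ and hence $X=Y\setminus S$. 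This gives the required inclusion and completes the proof.

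I do not expect any genuine obstacle here: the statement rests on the elementary fact that deleting $S$ from $G$ and then deleting a modulator $X$ of $G-S$ produces the same graph as deleting the single set $X\cup S$ from $G$, together with the observation that for the canonical hyperparameterisation each of $\mathcal{F}_G$ and $\mathcal{F}_{G-S}$ has exactly one member. The only delicate point is the degenerate situation in which some graph admits no $(\rho,c)$-modulator; but since we treat $\rhocm$ as a genuine hyperparameter, $H_G$ is nonempty for every graph $G$, so this case does not arise.
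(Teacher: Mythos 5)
Your proof is correct and follows essentially the same route as the paper's: both unwind the canonical $\maxmin$ hyperparameterisation of $\rhocm$, observe that $\mathcal{F}_G$ and $\mathcal{F}_{G-S}$ are singletons consisting of the set of all $(\rho,c)$-modulators, and witness the inclusion via $Y = X \cup S$ using the identity $G-(X\cup S)=(G-S)-X$. Your closing remark about nonemptiness of $H_G$ is a harmless extra observation that the paper leaves implicit.
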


\begin{proof}
Let $(\lambda, \mathrm{opt}, \mathcal{F})$ be the canonical hyperparameterisation of $\rhocm$ given by \cref{hyperpar-optimisation}.
Then $\opt = \maxmin$ and $\mathcal{F}_G = \big\{\{X\subseteq V(G) \colon \rho(G-X)\le c\}\big\}$ for all graphs $G$.
Hence, $\rhocm$ is strongly inheritable if and only if for every graph $G$, every set $S\subseteq V(G)$, and every set $X'\subseteq V(G-S)$ such that $\rho(G-S-X')\le c$, there exists a set $X\subseteq V(G)$ such that $\rho(G-X)\le c$ and $X' = X\setminus S$.
But this is clear, as we can take $X = X'\cup S$.
\end{proof}

Several other graph parameters of interest are also strongly inheritable. 

\begin{proposition}\label{obs:inheritable}
Let $\rho\in\{\omega,\chi,\tw,\pw,\td\}$.
Then, $\rho$ is strongly inheritable.
\end{proposition}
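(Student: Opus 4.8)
The plan is to verify strong inheritability separately for each of the five parameters, in each case exhibiting, for a given graph $G$ and a vertex subset $S\subseteq V(G)$, the required correspondence between hypergraphs in $\mathcal{F}_G$ and hypergraphs in $\mathcal{F}_{G-S}$. All five canonical hyperparameterisations use $\opt=\minmax$, so in every case I must check the first bullet of the definition: given $H'\in\mathcal{F}_{G-S}$, produce $H\in\mathcal{F}_G$ with $X\setminus S\in H'$ for every $X\in H$. The unifying idea is that every ``decomposition-type'' structure certifying a small value of $\rho$ on $G-S$ can be lifted to a decomposition of $G$ by inserting the vertices of $S$ into all the relevant pieces.

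Concretely, I would argue as follows. For \textbf{treewidth}: given a tree decomposition $\mathcal{T}'=(T,\beta')$ of $G-S$ (i.e.\ a hypergraph $H'\in\mathcal{F}_{G-S}^{\tw}$), define $\beta(x)=\beta'(x)\cup S$ for every node $x$ of $T$; this is a tree decomposition of $G$ since every edge of $G$ incident to $S$ now lies in every bag and the connectivity/coverage conditions are preserved, so it yields $H\in\mathcal{F}_G^{\tw}$ with $X\setminus S=\beta'(x)\in H'$ for each bag $X=\beta(x)$. The same construction works verbatim for \textbf{pathwidth} (the underlying tree is a path, and adding $S$ to every bag keeps it a path). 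For \textbf{treedepth}: given a treedepth decomposition, i.e.\ a rooted forest $F'$ whose transitive closure contains $G-S$, build $F$ by stacking the vertices of $S$ in an arbitrary linear order as a path above $F'$ (making the top vertex of this stack the parent of every root of $F'$); then $F$ is a rooted forest whose transitive closure contains $G$ (every vertex of $S$ becomes an ancestor of everything, so all new edges of $G$ are covered), and each root-to-leaf path $X$ of $F$ satisfies $X\setminus S=$ a root-to-leaf path of $F'$, i.e.\ lies in $H'$. For \textbf{chromatic number}: given a proper colouring of $G-S$, extend it to a proper colouring of $G$ by assigning $|S|$ fresh colours to the vertices of $S$; in the associated hypergraph $H$ (whose hyperedges are the rainbow vertex sets) every hyperedge $X$ satisfies $X\setminus S\in H'$ since deleting $S$ from a rainbow set of $G$ leaves a rainbow set of $G-S$ in the original colours. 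Finally \textbf{clique number}: here $\mathcal{F}_G=\{\{X:X\text{ is a clique in }G\}\}$ is a singleton, so one simply takes $H$ to be that unique hypergraph and observes $X\setminus S$ is a clique in $G-S$, hence belongs to the unique $H'\in\mathcal{F}_{G-S}$.

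I would then note that in each case one should double-check the combinatorial claims implicit above — e.g.\ that adding $S$ to every bag of a tree decomposition of $G-S$ really does cover every edge of $G$ with an endpoint in $S$ (it does, since both endpoints are then in every bag) — but these are routine. The only mildly delicate point is treedepth, where one must be careful that ``stacking $S$ above $F'$'' produces a forest (not a tree) exactly when $F'$ has several components, and that the root-to-leaf paths of the result are precisely $\{P\cup S: P$ a root-to-leaf path of $F'\}$ together with, if $S=\emptyset$, just the paths of $F'$; in either case $X\setminus S$ ranges over the root-to-leaf paths of $F'$, as required.

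I do not expect any serious obstacle: each verification is a short, standard decomposition-surgery argument, and the main care is simply bookkeeping across the five parameters and matching the formal definition of strong inheritability (in particular getting the direction of the inclusion $\{X\setminus S:X\in H\}\subseteq H'$ right for the $\minmax$ case). The payoff is that, combined with the (presumably upcoming) implication ``strongly inheritable $\Rightarrow$ inheritable'' and with \Cref{modulator is awesome}, this lets class properties be transferred between $\rhocm$ and $\rho$ for $\rho\in\{\omega,\chi,\tw,\pw,\td\}$.
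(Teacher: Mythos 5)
Your proposal is correct and follows essentially the same route as the paper's proof: all five parameters are $\minmax$ hyperparameters, and in each case the witnessing hypergraph $H\in\mathcal{F}_G$ is obtained by the same lifting (adding $S$ to every bag for treewidth and pathwidth, stacking $S$ as a rooted path above the forest for treedepth, extending the colouring for chromatic number, and using the trivial singleton hypermapping for clique number). Your write-up is in fact slightly more explicit than the paper's on the direction of the inclusion $\{X\setminus S:X\in H\}\subseteq H'$ for the clique number, but there is no substantive difference.
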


\begin{proof}
Note that all these parameters are $\minmax$ parameters (for the clique number, this follows from \Cref{hyperpar-optimisation}).
Clique number is strongly inheritable because for every graph $G$ and a set $S\subseteq V(G)$, any clique in the graph $G-S$ is a clique in $G$.
Chromatic number is strongly inheritable because any proper colouring of the graph $G-S$ can be extended to a proper colouring of the graph $G$.
Treewidth is strongly inheritable as a consequence of the fact that for every graph $G$ and $S\subseteq V(G)$, if $\mathcal{T} =(T,\beta)$ is a tree decomposition of $G-S$, then $(T,\beta_G)$ where $\beta_G(x) = \beta(x)\cup S$ for all nodes $x$ of $T$, is a tree decomposition of $G$.
The argument for pathwidth is similar.
Treedepth is strongly inheritable since for every graph $G$ and $S\subseteq V(G)$, if $F$ is a treedepth decomposition of $G-S$, then the rooted forest obtained from a rooted path $P$ on $S$ by adding an edge from the sink of $P$ to each of the roots of $F$ is a treedepth decomposition of $G$.
\end{proof}

Following Adler~\cite{adler2006width}, we say that an annotated graph parameter $\lambda$ is:\label{def:monotone-wekly-submod-tame}
\begin{itemize}
    \item \emph{monotone} if for all graphs $G$ and all $X\subseteq Y\subseteq V(G)$, we have $\lambda(G,X)\le \lambda(G,Y)$;
    \item \emph{weakly submodular} if for all graphs $G$ and finite collections $(X_i)_{i\in I}$ of subsets of $V(G)$, we have $\lambda\left(G,\bigcup_{i\in I}X_i\right)\le \sum_{i\in I}\lambda(G,X_i)$;
    \item \emph{tame} if for all graphs $G$ and $v\in V(G)$, we have $\lambda(G,\{v\})\le 1$.
\end{itemize}

\begin{observation}\label{at-most-card}
Let $\lambda$ be a weakly submodular and tame annotated graph parameter.
Then \hbox{$\lambda(G,X)\le |X|$} for every non-null graph $G$ and every set $X\subseteq V(G)$.
\end{observation}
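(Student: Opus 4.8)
The plan is to reduce the desired bound to a sum of singleton contributions. Given a nonempty set $X\subseteq V(G)$, I would consider the finite collection of singletons $(\{v\})_{v\in X}$ indexed by the elements of $X$; its union is precisely $X$. Applying weak submodularity of $\lambda$ to this collection yields
\[
\lambda(G,X)=\lambda\!\left(G,\bigcup_{v\in X}\{v\}\right)\le \sum_{v\in X}\lambda(G,\{v\})\,.
\]
By tameness, $\lambda(G,\{v\})\le 1$ for each $v$, so the right-hand side is at most $|X|$, which is exactly the claim.

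The remaining case $X=\emptyset$ can be settled by applying weak submodularity to the empty index collection: its union is the empty set and the associated sum is $0$, whence $\lambda(G,\emptyset)\le 0=|\emptyset|$. (The hypothesis that $G$ be non-null plays no role in this argument; it is presumably included only to match the standing convention on annotated graphs. If one reads ``finite collection'' as tacitly nonempty, then the case $X=\emptyset$ is simply vacuous or fixed by convention, and the substance of the statement is the nonempty case treated above.)

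I do not anticipate any real obstacle: both steps follow immediately from the definitions of \emph{weakly submodular} and \emph{tame} recalled just above the statement. The only point that merits a moment's care is the bookkeeping in the displayed inequality---verifying that $\bigcup_{v\in X}\{v\}=X$, so that weak submodularity applies verbatim to the singleton family---which is of course trivial.
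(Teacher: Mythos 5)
Your proof is correct and is exactly the argument the paper intends (the paper states this as an observation without an explicit proof): decompose $X$ into singletons, apply weak submodularity, and bound each term by tameness. Your handling of the $X=\emptyset$ case via the empty collection is a reasonable reading of the definition and does no harm.
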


\begin{lemma}\label{inheritable-lambda-rho at most}
Let $\rho$ be a strongly inheritable hyperparameter.
Then, for every monotone, weakly submodular, and tame annotated graph parameter $\lambda$, every graph $G$, and every set $S\subseteq V(G)$, it holds that $\lambda\text{-}\rho(G)\le \rho(G-S) + \lambda(G,S)$.
In particular, $\rho$ is inheritable.
\end{lemma}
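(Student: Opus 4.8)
The plan is to unfold the definitions $\lambda\text{-}\rho(G)=\lambda\text{-}\opt(G,\mathcal{F}_G)$ and $\rho(G-S)=\mathtt{card}\text{-}\opt(G-S,\mathcal{F}_{G-S})$, and to argue separately in the two cases of the definition of strong inheritability, according to whether $\opt=\minmax$ or $\opt=\maxmin$. In both cases the mechanism will be the same: for a well-chosen hyperedge $Z$ I will write $Z=(Z\setminus S)\cup(Z\cap S)$, bound $\lambda(G,Z)\le\lambda(G,Z\setminus S)+\lambda(G,Z\cap S)$ by weak submodularity applied to this two-element family, bound the first summand by $|Z\setminus S|\le\rho(G-S)$ using \Cref{at-most-card} (with $\lambda(G,\emptyset)\le 0$ obtained from weak submodularity applied to the empty family, which also takes care of the case $Z\subseteq S$ and the null graph), and bound the second summand by $\lambda(G,S)$ using monotonicity.

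Case $\opt=\minmax$. I would pick $H'\in\mathcal{F}_{G-S}$ attaining $\rho(G-S)=\min_{H''\in\mathcal{F}_{G-S}}\max_{X'\in H''}|X'|$, and use strong inheritability to obtain $H\in\mathcal{F}_G$ with $X\setminus S\in H'$ for every $X\in H$. For each such $X$ we have $|X\setminus S|\le\rho(G-S)$ since $X\setminus S\in H'$, hence $\lambda(G,X\setminus S)\le\rho(G-S)$ by the argument above, while $\lambda(G,X\cap S)\le\lambda(G,S)$ by monotonicity. Summing gives $\lambda(G,X)\le\rho(G-S)+\lambda(G,S)$ for every $X\in H$, so $\lambda\text{-}\rho(G)\le\max_{X\in H}\lambda(G,X)\le\rho(G-S)+\lambda(G,S)$.

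Case $\opt=\maxmin$. I would fix an arbitrary $H\in\mathcal{F}_G$ and use strong inheritability to obtain $H'\in\mathcal{F}_{G-S}$ with $H'\subseteq\{Y\setminus S\colon Y\in H\}$. Choosing $X'\in H'$ of minimum cardinality, we get $|X'|=\min_{Z'\in H'}|Z'|\le\rho(G-S)$, since $\rho(G-S)$ is the maximum over $\mathcal{F}_{G-S}$ of such minima. Picking $Y\in H$ with $Y\setminus S=X'$ and running the same estimate, $\lambda(G,Y)\le\lambda(G,Y\setminus S)+\lambda(G,Y\cap S)\le|X'|+\lambda(G,S)\le\rho(G-S)+\lambda(G,S)$, so $\min_{X\in H}\lambda(G,X)\le\lambda(G,Y)\le\rho(G-S)+\lambda(G,S)$. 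Taking the maximum over $H\in\mathcal{F}_G$ gives $\lambda\text{-}\rho(G)\le\rho(G-S)+\lambda(G,S)$, which is the main claim.

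For the ``in particular'' clause, it suffices to instantiate the statement at $\lambda=\mathtt{card}$ and $\lambda=\alpha$, after observing that both of these annotated graph parameters are monotone, weakly submodular, and tame: this is immediate for $\mathtt{card}$, and for $\alpha$ monotonicity and tameness are clear while weak submodularity follows by assigning each vertex of an independent set in $G[\bigcup_{i\in I}X_i]$ to one of the sets $X_i$ containing it. I do not anticipate a genuine obstacle: once strong inheritability is unpacked, the proof is essentially bookkeeping, and the only points needing care are orienting the min--max and max--min inequalities correctly when matching the optimal hypergraph for $G-S$ with the hypergraph in $\mathcal{F}_G$ furnished by strong inheritability, and handling the degenerate case of an empty hyperedge when invoking \Cref{at-most-card}.
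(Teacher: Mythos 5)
Your proof is correct and follows essentially the same route as the paper's: in both cases of $\opt$ you match the witnessing hypergraph from strong inheritability with the optimal hypergraph for $G-S$, split each hyperedge as $(X\cap S)\cup(X\setminus S)$, and combine weak submodularity, monotonicity, and the bound $\lambda(G,X\setminus S)\le|X\setminus S|$ exactly as in \Cref{at-most-card}. The only difference is presentational (you bound termwise over the chosen hypergraph rather than threading the estimates through the full min--max chains), so there is nothing substantive to add.
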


\begin{proof}
Let $\rho=(\mathtt{card},\opt,\mathcal{F})$ be a strongly inheritable  hyperparameter and let $\lambda$, $G$, and $S$ be as in the lemma.
Let $c = \rho(G-S)$.

Consider first the case when $\mathrm{opt}= \mathrm{minmax}$.
Then
\[c = \min_{H \in \mathcal{F}_{G-S}} \max_{X \in H} |X|\,,\]
in particular, there exists a hypergraph $H^* \in \mathcal{F}_{G-S}$ such that $\max_{X \in H^*} |X| = c$.
Since $\rho$ is strongly inheritable, there exists a hypergraph $\widehat H\in \mathcal{F}_{G}$ such that $\{X\setminus S\colon X\in \widehat H\} \subseteq H^*$, that is, $X\setminus S\in H^*$ for all $X\in \widehat H$.
This implies that 
\[\min_{H \in \mathcal{F}_{G}} \max_{X \in H} |X\setminus S| \le \max_{X \in \widehat{H}} |X\setminus S|\le \max_{X \in H^*} |X| = c\,.\]
Consequently, 
\begin{alignat*}{2}
\lambda\text{-}\rho(G) & = \min_{H \in \mathcal{F}_{G}} \max_{X \in H} \lambda(G,X) \\
&\le \min_{H \in \mathcal{F}_{G}} \max_{X \in H} \Big(\lambda(G,X \cap S) + \lambda(G,X\setminus S)\Big) &\quad &\text{(since $\lambda$ is weakly submodular)} \\
&\le \lambda(G,S) + \min_{H \in \mathcal{F}_{G}} \max_{X \in H} |X\setminus S|  &\quad &\text{(by monotonicity and \Cref{at-most-card})} \\
&\le \lambda(G,S) +c\,,\\
&= \lambda(G,S) +\rho(G-S)\,,
\end{alignat*}
yielding the desired inequality.

Suppose now that $\mathrm{opt}= \mathrm{maxmin}$.
Then 
\[c = \rho(G-S) =\max_{H \in \mathcal{F}_{G-S}} \min_{X \in H} |X|\,,\]
in particular, for all hypergraphs $H \in \mathcal{F}_{G-S}$ it holds that $\min_{X \in H} |X| \le c$.
Consider an arbitrary hypergraph $H\in  \mathcal{F}_{G}$.
Since $\rho$ is strongly inheritable, there exists a hypergraph $H'\in \mathcal{F}_{G-S}$ such that $H'\subseteq \{X\setminus S\colon X\in H\}$.
Let $X_{H'}$ be a hyperedge in $H'$ such that $|X_{H'}|\le c$.
Then, there exists a hyperedge $X_H\in H$ such that $X_{H'} = X_H\setminus S$, and, hence,
\[\min_{X \in H} |X\setminus S|\le |X_H\setminus S| = |X_{H'}|\le c\,.\]
Since $H$ is arbitrary, this implies that 
\[\max_{H \in \mathcal{F}_{G}} \min_{X \in H} |X\setminus S| \le c\,.\]
Consequently, 
\begin{alignat*}{2}
\lambda\text{-}\rho(G) &= \max_{H \in \mathcal{F}_{G}} \min_{X \in H} \lambda(G,X)\\
&\le \max_{H \in \mathcal{F}_{G}} \min_{X \in H} \Big(\lambda(G,X \cap S) + \lambda(G,X\setminus S)\Big) &\quad &\text{(since $\lambda$ is weakly submodular)}\\
&\le \lambda(G,S) + \max_{H \in \mathcal{F}_{G}} \min_{X \in H} |X\setminus S|  &\quad &\text{(by monotonicity and \Cref{at-most-card})} \\
&\le \lambda(G,S) +c\\
&= \lambda(G,S) + \rho(G-S)\,
\end{alignat*}
as claimed.

Hence, in both cases, $\lambda\text{-}\rho(G)\le \lambda(G,S) + \rho(G-S)$.
Since each $\lambda\in \{\mathtt{card},\alpha\}$ is  monotone, weakly submodular, and tame, it follows that $\rho$ is inheritable.
\end{proof}

As an immediate consequence of \cref{obs:inheritable,obs:inheritable-2,inheritable-lambda-rho at most}, we obtain the following.

\begin{corollary}\label{cor:inheritable}
For every graph parameter $\rho$ and integer $c$, the $(\rho,c)$-modulator number $\rhocm$ is inheritable.
Furthermore, if $\rho\in\{\omega,\chi,\tw,\pw,\td\}$, then $\rho$ is inheritable.
\end{corollary}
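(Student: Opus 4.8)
The plan is to simply assemble the three preceding results, since no new argument is required. First I would recall that, by \cref{obs:inheritable-2}, for every graph parameter $\rho$ and every integer $c$ the $(\rho,c)$-modulator number $\rhocm$ is strongly inheritable, and that, by \cref{obs:inheritable}, each parameter in $\{\omega,\chi,\tw,\pw,\td\}$ is strongly inheritable. Thus in both cases the hyperparameter under consideration satisfies the hypothesis of \cref{inheritable-lambda-rho at most}.

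Next I would invoke \cref{inheritable-lambda-rho at most}: for any strongly inheritable hyperparameter $\rho$, any monotone, weakly submodular, and tame annotated graph parameter $\lambda$, any graph $G$, and any $S\subseteq V(G)$, we have $\lambda\text{-}\rho(G)\le \rho(G-S)+\lambda(G,S)$. The concluding sentence of that lemma already observes that both relevant choices $\lambda\in\{\mathtt{card},\alpha\}$ are monotone, weakly submodular, and tame, so the inequality $\lambda\text{-}\rho(G)\le \rho(G-S)+\lambda(G,S)$ holds for each of these two $\lambda$, which is exactly the definition of $\rho$ being inheritable. Applying this with $\rho=\rhocm$ yields the first assertion, and applying it with $\rho\in\{\omega,\chi,\tw,\pw,\td\}$ yields the second.

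There is essentially no obstacle here; the substantive content was carried out in \cref{obs:inheritable-2,obs:inheritable} (verifying strong inheritability of the listed hyperparameters via explicit manipulations of tree/path/treedepth decompositions, colourings, cliques, and modulators) and in \cref{inheritable-lambda-rho at most} (the $\minmax$ and $\maxmin$ computations using weak submodularity together with \cref{at-most-card}). The only thing worth stating explicitly is that $\mathtt{card}$ and $\alpha$ indeed fall under the scope of \cref{inheritable-lambda-rho at most}, but this is already recorded in its proof, so the corollary follows at once.
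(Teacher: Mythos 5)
Your proposal is correct and matches the paper exactly: the paper likewise presents this corollary as an immediate consequence of \cref{obs:inheritable-2}, \cref{obs:inheritable}, and \cref{inheritable-lambda-rho at most} (whose final sentence already records that strong inheritability implies inheritability via the choices $\lambda\in\{\mathtt{card},\alpha\}$). No further argument is needed.
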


\subsubsection{From modulators to parameters and back}

We now compare three class properties defined in terms of a basic hyperparameter~$\rho$.
More precisely, we compare the properties bounded $\rho$, bounded $\alpha$-$\rho$, and clique-bounded $\rho$ (recall also the respective notations $\mathcal{B}_{\rho}$, $\mathcal{B}_{\alpha\text{-}\rho}$, and $\mathcal{C}_{\rho}$ from \Cref{sec:antiRamsey}) with the analogous properties for the $(\rho,c)$-modulator numbers.

Note that if a graph class $\mathcal{G}$ belongs to $\mathcal{B}_{\rho}$, say with $\rho(G)\le c$ for all induced subgraphs $G$ of graphs in $\mathcal{G}$, then every such graph $G$ satisfies $\rhocm(G) = 0$; in particular, $\mathcal{G}$ belongs to the class property $\mathcal{B}_{\rhocm}$.
This implies that the class property $\mathcal{B}_{\rho}$ is contained in the union, over all integers $c$, of the class properties $\mathcal{B}_{\rhocm}$. 
For later use, we state this as an \lcnamecref{obs:class-property}.
\begin{observation}\label{obs:class-property}
Let $\rho$ be a basic hyperparameter. 
Then, $\mathcal{B}_{\rho} \subseteq \bigcup_{c}\mathcal{B}_{\rhocm}$.
\end{observation}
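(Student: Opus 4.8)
The statement to prove is \Cref{obs:class-property}: for a basic hyperparameter $\rho$, we have $\mathcal{B}_{\rho} \subseteq \bigcup_{c}\mathcal{B}_{\rhocm}$.

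The plan is to unpack the definitions of the two class properties and exhibit, for every graph class in $\mathcal{B}_\rho$, a single integer $c$ witnessing membership in $\mathcal{B}_{\rhocm}$. First I would take an arbitrary graph class $\mathcal{G} \in \mathcal{B}_\rho$. By definition of having bounded $\rho$, there exists an integer $c$ such that every induced subgraph $G'$ of every graph $G \in \mathcal{G}$ satisfies $\rho(G') \le c$. I would fix this $c$ and show $\mathcal{G} \in \mathcal{B}_{\rhocm}$, i.e., that there exists an integer $c'$ (in fact $c' = 0$ works) such that every induced subgraph $G'$ of every graph in $\mathcal{G}$ satisfies $\rhocm(G') \le c'$.

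The key observation is that $\rhocm(G') = 0$ for every such $G'$: since $\rho(G') \le c$, the empty set $S = \emptyset$ is already a $(\rho,c)$-modulator of $G'$, because $G' - \emptyset = G'$ and $\rho(G') \le c$ by choice of $c$. Hence the minimum cardinality of a $(\rho,c)$-modulator in $G'$ is $0$, so $\rhocm(G') = 0 \le 0$. This shows $\mathcal{G}$ has bounded $\rhocm$, with bounding constant $0$, so $\mathcal{G} \in \mathcal{B}_{\rhocm} \subseteq \bigcup_{c'} \mathcal{B}_{\rho_{c'}\text{-modulator}}$; more precisely $\mathcal{G} \in \mathcal{B}_{\rhocm}$ for the specific $c$ we fixed, which is one of the sets in the union on the right-hand side.

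There is essentially no obstacle here — the only minor point to be careful about is matching the paper's convention for "bounded $\rho$", which quantifies over all induced subgraphs rather than just the graphs in $\mathcal{G}$; but this works in our favour since it is exactly what we need to conclude $\rhocm(G') = 0$ for induced subgraphs as well. I would also note that one should invoke \cref{hyperpar-optimisation} (and \cref{hyperpar-optimisation}'s remark that $\rhocm$ admits a canonical basic hyperparameterisation) only implicitly, since the statement already treats $\rhocm$ as a basic hyperparameter. The whole argument is two or three sentences.

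\begin{proof}
Let $\mathcal{G}$ be a graph class in $\mathcal{B}_{\rho}$.
By definition of bounded $\rho$, there exists an integer $c$ such that every induced subgraph $G'$ of every graph $G \in \mathcal{G}$ satisfies $\rho(G') \le c$.
Fix such a $c$.
For any such $G'$, the empty set is a $(\rho,c)$-modulator of $G'$, since $G' - \emptyset = G'$ and $\rho(G') \le c$; hence $\rhocm(G') = 0$.
Therefore every induced subgraph of every graph in $\mathcal{G}$ satisfies $\rhocm(G') \le 0$, so $\mathcal{G}$ has bounded $\rhocm$, that is, $\mathcal{G} \in \mathcal{B}_{\rhocm}$.
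In particular, $\mathcal{G} \in \bigcup_{c}\mathcal{B}_{\rhocm}$.
\end{proof}
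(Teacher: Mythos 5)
Your proof is correct and matches the paper's argument exactly: the paper likewise observes that if $\rho(G')\le c$ for all induced subgraphs $G'$ of graphs in $\mathcal{G}$, then the empty set is a $(\rho,c)$-modulator, so $\rhocm(G')=0$ and $\mathcal{G}\in\mathcal{B}_{\rhocm}$. Nothing further is needed.
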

Note also that if $c \le c'$, then $\mathcal{B}_{\rhocm}\subseteq \mathcal{B}_{\mu_{\rho,c'}}$, so the \emph{limiting property} $\lim_{c\to \infty}\mathcal{B}_{\rhocm}$ is well-defined and equals to the union, over all integers $c$, of the class properties $\mathcal{B}_{\rhocm}$.
We show that whenever the basic hyperparameter is inheritable, the limiting property is actually equal to the class property $\mathcal{B}_{\rho}$. 
On the other hand, we show that the analogous identities fail for $\mathcal{B}_{\alpha\text{-}\rho}$ and $\mathcal{C}_{\rho}$.
More precisely, we show that for every inheritable hyperparameter $\rho$, the following holds (see \Cref{fig questions}): 
\begin{itemize}
    \item $\bigcup_{c}\mathcal{B}_{\rhocm} = \mathcal{B}_{\rho}$ (see~\Cref{obs:class-property,thm:alpha-rho at most alpha-rhocm}).
   \item $\bigcup_{c}\mathcal{B}_{\alpha\text{-}\rhocm}\subseteq \mathcal{B}_{\alpha\text{-}\rho}$ (see \cref{thm:alpha-rho at most alpha-rhocm}), but the inclusion may be strict (see~\cref{prop:alpha-mu-rho-c-bounded-strict}).
    \item $\bigcup_{c}\mathcal{C}_{\rhocm}\subseteq \mathcal{C}_{\rho}$ (see \cref{thm:alpha-rho at most alpha-rhocm}), but the inclusion may be strict (see \cref{cor:clique-bounded-strict}).
\end{itemize}

\begin{figure}[h!]
    \centering
    \begin{tikzpicture}[yscale=1.25]
    \crefname{proposition}{Prop.}{Prop.}
    \crefname{theorem}{Thm.}{Thm.}
    
    \usetikzlibrary{arrows.meta}
    \usetikzlibrary{calc}
        \tikzset{every node/.style={rectangle, draw, very thick, inner sep = 5pt, outer sep = 2pt, align = center, rounded corners=2.pt}}

        \node[text width = 3.5cm, align = center] (bounded-rhocm-some-c) at (0,0) {\strut bounded $\rhocm$, for some $c$};
        \node[text width = 3.5cm, align = center, right = 3cm of bounded-rhocm-some-c] (bounded-rho) {\strut bounded $\rho$};
        
        \node[text width = 3.5cm, align = center, below = 1.65cm of bounded-rhocm-some-c] (bounded-alpha-rhocm-some-c) {\strut bounded $\alpha\text{-}\rhocm$, for some $c$};
        \node[text width = 3.5cm, align = center, right = 3cm of bounded-alpha-rhocm-some-c] (bounded-alpha-rho) {\strut bounded $\alpha\text{-}\rho$};
        
        \node[text width = 3.5cm, align = center, below = 1.65cm of bounded-alpha-rhocm-some-c] (clique-bounded-rhocm-some-c) {\strut clique-bounded $\rhocm$, for some $c$};
        \node[text width = 3.5cm, align = center, right = 3cm of clique-bounded-rhocm-some-c] (clique-bounded-rho) {\strut clique-bounded $\rho$};

        \draw [-{Latex[round,length=2.5mm,width=2.5mm]},very thick] ($(bounded-rhocm-some-c.east)+(0.1,0.25)$) -- node[auto,fill=none,draw=none,above,inner sep=0pt]{\strut $\star$} ($(bounded-rho.west)+(-0.1,0.25)$);
        \draw [-{Latex[round,length=2.5mm,width=2.5mm]},very thick] ($(bounded-rho.west)-(0.1,0.25)$) -- ($(bounded-rhocm-some-c.east)-(-0.1,0.25)$);
        
        \draw [-{Latex[round,length=2.5mm,width=2.5mm]},very thick] ($(bounded-alpha-rhocm-some-c.east)+(0.1,0)$) -- node[auto,fill=none,draw=none,above,inner sep=0pt]{\strut $\star$} ($(bounded-alpha-rho.west)+(-0.1,0)$);
        \draw [-{Latex[round,length=2.5mm,width=2.5mm]},very thick] ($(clique-bounded-rhocm-some-c.east)+(0.1,0)$) -- node[auto,fill=none,draw=none,above,inner sep=0pt]{\strut $\star$} ($(clique-bounded-rho.west)+(-0.1,0)$);

        \draw [-{Latex[round,length=2.5mm,width=2.5mm]},very thick] ($(bounded-rhocm-some-c.south)+(0,-0.1)$) -- ($(bounded-alpha-rhocm-some-c.north)+(0,0.1)$);
        \draw [-{Latex[round,length=2.5mm,width=2.5mm]},very thick] ($(bounded-alpha-rhocm-some-c.south)+(-0.25,-0.1)$) -- 
        node[auto,fill=none,draw=none,left]{\strut \cref{prop:main}} ($(clique-bounded-rhocm-some-c.north)+(-0.25,0.1)$);
        \draw [-{Latex[round,length=2.5mm,width=2.5mm]},very thick] ($(clique-bounded-rhocm-some-c.north)+(0.25,0.1)$) -- node[auto,fill=none,draw=none,right,text width=2.25cm]{\strut \cref{modulator is awesome} (if~$\rho$ is heavy)} ($(bounded-alpha-rhocm-some-c.south)+(0.25,-0.1)$);

        \draw [-{Latex[round,length=2.5mm,width=2.5mm]},very thick] ($(bounded-rho.south)+(0,-0.1)$) -- ($(bounded-alpha-rho.north)+(0,0.1)$);
        \draw [-{Latex[round,length=2.5mm,width=2.5mm]},very thick] ($(bounded-alpha-rho.south)+(-0.25,-0.1)$) -- node[auto,fill=none,draw=none,left]{\strut \cref{prop:main}} ($(clique-bounded-rho.north)+(-0.25,0.1)$);

        \draw [-{Latex[round,length=2.5mm,width=2.5mm]},very thick] ($(clique-bounded-rho.north)+(0.25,0.1)$) --  node[auto,fill=none,draw=none,right,text width=2cm,inner sep=0pt]{\strut iff $\rho$ is awesome} ($(bounded-alpha-rho.south)+(0.25,-0.1)$);
    \end{tikzpicture}
    \caption{Inclusions of class properties described in terms of a basic hyperparameter $\rho$.
    An arc from $A$ to $B$ means that $A \subseteq B$.
    Unlabelled arcs represent inclusions that hold for all basic hyperparameters and follow directly from the definitions.  
    The left-to-right inclusions (marked by $\star$) hold for inheritable hyperparameters and follow from \cref{thm:alpha-rho at most alpha-rhocm}.}
    \label{fig questions}
\end{figure}

The proof of the left-to-right inclusions in \Cref{fig questions} for inheritable hyperparameters is based on the following. 

\begin{observation}\label{lambda-rho at most lambda-rhocm}
Let $\rho$ be a basic hyperparameter and let $\lambda$ be an annotated graph parameter such that for every graph $G$ and every set $S\subseteq V(G)$, it holds that $\lambda\text{-}\rho(G)\le \rho(G-S) + \lambda(G,S)$.
Then, for every graph $G$ and every integer $c$, we have $\lambda\text{-}\rho(G)\le \lambda\text{-}\rhocm(G) + c$.
\end{observation}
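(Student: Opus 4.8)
The plan is to reduce the statement to a single application of the hypothesis by unpacking the canonical hyperparameterisation of $\rhocm$. First I would recall from \cref{hyperpar-optimisation} that $\rhocm$, being a minimisation parameter over vertex subsets, has canonical hyperparameterisation with $\opt = \maxmin$ and $\mathcal{F}_G = \big\{\{X \subseteq V(G) : \rho(G-X) \le c\}\big\}$ for every graph $G$. Since this family is a singleton, evaluating $\lambda\text{-}\maxmin$ on it yields
\[
\lambda\text{-}\rhocm(G) = \min\big\{\lambda(G,S) : S \text{ is a } (\rho,c)\text{-modulator in } G\big\}\,.
\]

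Next I would fix a $(\rho,c)$-modulator $S^*$ of $G$ attaining this minimum, so that $\lambda(G,S^*) = \lambda\text{-}\rhocm(G)$ and, by the definition of a modulator, $\rho(G-S^*) \le c$. Applying the hypothesis of the observation with $S := S^*$ then gives
\[
\lambda\text{-}\rho(G) \le \rho(G-S^*) + \lambda(G,S^*) \le c + \lambda\text{-}\rhocm(G)\,,
\]
as desired; the case distinction on $\opt$ is already absorbed into the hypothesis, so both cases are handled simultaneously.

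I do not expect any real obstacle here: the content of the statement is essentially the hypothesis itself, and the only step is to recognise that minimising $\lambda(G,S)$ over $(\rho,c)$-modulators $S$ is exactly what $\lambda\text{-}\rhocm(G)$ computes. The sole point requiring a word of care is the implicit assumption that $G$ admits at least one $(\rho,c)$-modulator, so that $\mathcal{F}_G$ is a genuine (nonempty) hypergraph and the minimum above is taken over a nonempty set; this holds in all situations relevant to the paper (for instance whenever $\rho$ is a nonnegative parameter and $c \ge 0$, since then $S = V(G)$ is always a $(\rho,c)$-modulator).
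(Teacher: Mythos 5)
Your proof is correct and follows essentially the same route as the paper: both arguments fix a $(\rho,c)$-modulator $S$ minimising $\lambda(G,S)$, so that $\lambda(G,S)=\lambda\text{-}\rhocm(G)$ and $\rho(G-S)\le c$, and then apply the hypothesised inequality once. Your additional remark about the nonemptiness of the family of modulators is a reasonable point of care but does not change the argument.
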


\begin{proof}
Let $S$ be a \hbox{$(\rho,c)$-modulator} of $G$ minimising $\lambda(G,S)$, that is, $\rho(G-S)\le c$ and $\lambda(G,S) = \lambda\text{-}\rhocm(G)$.
Then, $\lambda\text{-}\rho(G)\le \rho(G-S)+\lambda(G,S) \le c + \lambda\text{-}\rhocm(G)$, as claimed.
\end{proof}

\begin{theorem}\label{thm:alpha-rho at most alpha-rhocm}
Let $\rho$ be an inheritable hyperparameter.
Then, for every integer $c$ and every graph class $\mathcal{G}$, the following properties hold:
\begin{enumerate}[label=(\arabic*)]
    \item\label[property]{rho-prop1} If $\mathcal{G}$ has bounded $\rhocm$, then $\mathcal{G}$ has bounded $\rho$.
    \item\label[property]{rho-prop2} If $\mathcal{G}$ has bounded $\alpha\text{-}\rhocm$, then $\mathcal{G}$ has bounded $\alpha\text{-}\rho$.
    \item\label[property]{rho-prop3} If $\mathcal{G}$ has clique-bounded $\rhocm$, then $\mathcal{G}$ has clique-bounded $\rho$. 
\end{enumerate}
\end{theorem}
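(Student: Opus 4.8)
The plan is to derive all three statements from two pointwise inequalities comparing $\rho$ with $\rhocm$ and $\alpha\text{-}\rho$ with $\alpha\text{-}\rhocm$, obtained by feeding the inheritability hypothesis into \cref{lambda-rho at most lambda-rhocm}. Since $\rho$ is inheritable, by definition we have $\lambda\text{-}\rho(G)\le \rho(G-S)+\lambda(G,S)$ for each $\lambda\in\{\mathtt{card},\alpha\}$, every graph $G$, and every set $S\subseteq V(G)$. This is precisely the hypothesis of \cref{lambda-rho at most lambda-rhocm}, so applying that observation once with $\lambda=\mathtt{card}$ and once with $\lambda=\alpha$ gives, for every graph $G$,
\[
\rho(G)\le \rhocm(G)+c
\qquad\text{and}\qquad
\alpha\text{-}\rho(G)\le \alpha\text{-}\rhocm(G)+c,
\]
where we used that $\mathtt{card}\text{-}\rho=\rho$ and $\mathtt{card}\text{-}\rhocm=\rhocm$, since both $\rho$ and $\rhocm$ are basic hyperparameters.

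With these inequalities in hand, each item follows by unwinding the corresponding definition of boundedness from \cref{sec:gp-gc-cp}, applied to all induced subgraphs $G'$ of graphs in $\mathcal{G}$. For \cref{rho-prop1}: if there is a constant $d$ with $\rhocm(G')\le d$ for all such $G'$, then $\rho(G')\le d+c$, so $\mathcal{G}$ has bounded $\rho$. For \cref{rho-prop2}: if $\alpha\text{-}\rhocm(G')\le d$ for all such $G'$, then $\alpha\text{-}\rho(G')\le d+c$, so $\mathcal{G}$ has bounded $\alpha\text{-}\rho$. For \cref{rho-prop3}: if $f$ is a binding function witnessing clique-boundedness of $\rhocm$ in $\mathcal{G}$, so that $\rhocm(G')\le f(\omega(G'))$ for all such $G'$, then $\rho(G')\le f(\omega(G'))+c$, and the function $p\mapsto f(p)+c$ is a nondecreasing binding function certifying that $\mathcal{G}$ has clique-bounded $\rho$.

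I expect no real obstacle here: the substance has already been isolated in the notion of inheritability and in \cref{lambda-rho at most lambda-rhocm}. The only point needing a little care is that boundedness and clique-boundedness quantify over all induced subgraphs of graphs in $\mathcal{G}$, so the two pointwise inequalities must be invoked for those induced subgraphs as well---which is fine, since they hold for every graph. Finally, it is worth recording that \cref{rho-prop1} together with \cref{obs:class-property} yields the identity $\bigcup_{c}\mathcal{B}_{\rhocm}=\mathcal{B}_{\rho}$ promised in \cref{fig questions}, and that \cref{rho-prop2,rho-prop3} are exactly the two inclusions marked with $\star$ in that figure.
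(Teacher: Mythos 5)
Your proposal is correct and follows essentially the same route as the paper: both derive the pointwise inequalities $\rho(G)\le \rhocm(G)+c$ and $\alpha\text{-}\rho(G)\le \alpha\text{-}\rhocm(G)+c$ by plugging the definition of inheritability into \cref{lambda-rho at most lambda-rhocm} with $\lambda=\mathtt{card}$ and $\lambda=\alpha$, and then unwind the definitions of (clique-)boundedness over all induced subgraphs. Your added remarks (applying the inequalities to induced subgraphs, and the shifted binding function $p\mapsto f(p)+c$ remaining nondecreasing) are exactly the minor points the paper also handles, just stated more explicitly.
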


\begin{proof}
    Let $\rho$ be as in the statement, let $c$ be an integer, and let $\mathcal{G}$ be a graph class. 
    Applying \Cref{lambda-rho at most lambda-rhocm} to $\lambda\in \{\mathtt{card},\alpha\}$, we obtain \cref{rho-prop1,rho-prop2}.
    To prove \cref{rho-prop3}, let $\mathcal{G}$ be a graph class that has clique-bounded $\rhocm$, and let $f$ be a function such that for every graph $G\in \mathcal{G}$ and every induced subgraph $G'$ of $G$, $\rhocm(G') \le f(\omega(G'))$.
    Applying \Cref{lambda-rho at most lambda-rhocm} to $\lambda = \mathtt{card}$ shows that for every $G\in \mathcal{G}$ and every induced subgraph $G'$ of $G$, we have $\rho(G')\le \rhocm(G') + c\le f(\omega(G')) + c$.
    Therefore, since $c$ is a constant, it follows that $\mathcal{G}$ has clique-bounded~$\rho$.
\end{proof}

\begin{remark}\label{necessary inheritability}
In \cref{thm:alpha-rho at most alpha-rhocm}, the assumption that $\rho$ is inheritable is necessary.
Indeed, the inclusion relations between class properties marked by $\star$ in \Cref{fig questions} may fail to hold for some basic hyperparameters~$\rho$.
Consider for example the maximum degree, $\rho = \Delta$.
Let $\mathcal{G}$ be the class of all complete bipartite graphs of the form $K_{1,n}$ and their induced subgraphs.
Then each graph $G\in \mathcal{G}$ satisfies $\mu_{\Delta,0}(G) \le 1$, hence, $\mathcal{G}$ has bounded $\mu_{\Delta,0}$.
On the other hand, each graph $G\in \mathcal{G}$ satisfies $\omega(G) \le 2$, while the maximum degree of graphs in $G$ is unbounded, showing that the class $\mathcal{G}$ does not have clique-bounded $\Delta$.
This implies that none of the left-to-right inclusions in \Cref{fig questions} holds for $\rho = \Delta$.
Along with \Cref{thm:alpha-rho at most alpha-rhocm}, this also implies that the maximum degree is not an inheritable parameter.
\end{remark}

\Cref{thm:alpha-rho at most alpha-rhocm} implies that for any inheritable hyperparameter $\rho$ and any constant $c$, the class property $\mathcal{B}_{\rhocm}$ is a subfamily of the class property $\mathcal{B}_{\rho}$, that is, $\bigcup_{c}\mathcal{B}_{\rhocm} \subseteq \mathcal{B}_{\rho}$.
Hence, by \Cref{obs:class-property}, the class property $\mathcal{B}_{\rho}$ equals the union, over all integers $c$, of the class properties $\mathcal{B}_{\rhocm}$.

Similarly, \Cref{thm:alpha-rho at most alpha-rhocm} implies that $\bigcup_{c}\mathcal{B}_{\alpha\text{-}\rhocm}\subseteq \mathcal{B}_{\alpha\text{-}\rho}$ and $\bigcup_{c}\mathcal{C}_{\rhocm}\subseteq \mathcal{C}_{\rho}$.
We next show that equality holds whenever $\rho$ is the $(\eta,d)$-modulator number, for any inheritable and heavy hyperparameter $\eta$ and any integer $d$.

\begin{proposition}\label{prop:modulators-equality}
Let $\eta$ be an inheritable and heavy hyperparameter, $d$ an integer, and let $\rho$ be the $(\eta,d)$-modulator number.
Then, $\rho$ is inheritable and heavy, and $\mathcal{B}_{\alpha\text{-}\rho} = \bigcup_{c}\mathcal{B}_{\alpha\text{-}\rhocm}$ and $\mathcal{C}_{\rho} = \bigcup_{c}\mathcal{C}_{\rhocm}$.
\end{proposition}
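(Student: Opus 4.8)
The plan is to first establish the two structural claims about $\rho=\mu_{\eta,d}$ (that it is inheritable and heavy), then record one simple but crucial identity, and finally combine it with \cref{thm:alpha-rho at most alpha-rhocm} to obtain the two equalities of class properties.

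Inheritability of $\rho$ is immediate: $\rho$ is the $(\eta,d)$-modulator number of the graph parameter associated with $\eta$ via \cref{hyperparameter-to-parameter}, and \cref{cor:inheritable} states that $(\cdot,c)$-modulator numbers are always inheritable (note this does not even use that $\eta$ is inheritable). For heaviness I would argue as follows. First, $\rho$ is monotone under induced subgraphs: if $H$ is an induced subgraph of $G$ and $S$ is a minimum $(\eta,d)$-modulator of $G$, then, since $\eta$ is heavy and hence monotone under induced subgraphs, $\eta\bigl(H-(S\cap V(H))\bigr)\le \eta(G-S)\le d$, so $S\cap V(H)$ is an $(\eta,d)$-modulator of $H$ and $\rho(H)\le |S\cap V(H)|\le |S|=\rho(G)$. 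Second, $\rho$ is unbounded on complete graphs: monotonicity of $\eta$ together with its unboundedness on complete graphs forces $\eta(K_j)\to\infty$, so $t^{\ast}:=\max\{j\colon \eta(K_j)\le d\}$ is a well-defined finite number (with the convention $t^{\ast}=0$ if this set is empty); any $(\eta,d)$-modulator $S$ of $K_n$ satisfies $\eta(K_{n-|S|})=\eta(K_n-S)\le d$, whence $n-|S|\le t^{\ast}$ and therefore $\rho(K_n)\ge n-t^{\ast}$, which tends to infinity. Thus $\rho$ is heavy.

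The crucial identity is $\mu_{\rho,0}=\rho$. Indeed, a set $T\subseteq V(G)$ is a $(\rho,0)$-modulator of $G$ exactly when $\mu_{\eta,d}(G-T)\le 0$, i.e., when $\emptyset$ is an $(\eta,d)$-modulator of $G-T$, i.e., when $\eta(G-T)\le d$ --- which is precisely the defining condition for $T$ to be an $(\eta,d)$-modulator of $G$. Hence the $(\rho,0)$-modulators of $G$ are exactly the $(\eta,d)$-modulators of $G$; the canonical hyperparameterisations of $\mu_{\rho,0}$ and of $\rho$ therefore coincide, and in particular $\alpha\text{-}\mu_{\rho,0}=\alpha\text{-}\rho$, so $\mathcal{B}_{\alpha\text{-}\mu_{\rho,0}}=\mathcal{B}_{\alpha\text{-}\rho}$ and $\mathcal{C}_{\mu_{\rho,0}}=\mathcal{C}_{\rho}$.

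With these in hand the two equalities fall out. Since $\rho$ is inheritable, \cref{thm:alpha-rho at most alpha-rhocm} gives $\bigcup_{c}\mathcal{B}_{\alpha\text{-}\rhocm}\subseteq\mathcal{B}_{\alpha\text{-}\rho}$ and $\bigcup_{c}\mathcal{C}_{\rhocm}\subseteq\mathcal{C}_{\rho}$; conversely, taking $c=0$ and using the identity above, $\mathcal{B}_{\alpha\text{-}\rho}=\mathcal{B}_{\alpha\text{-}\mu_{\rho,0}}\subseteq\bigcup_{c}\mathcal{B}_{\alpha\text{-}\rhocm}$ and $\mathcal{C}_{\rho}=\mathcal{C}_{\mu_{\rho,0}}\subseteq\bigcup_{c}\mathcal{C}_{\rhocm}$, so equality holds in both cases. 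The bulk of this argument is bookkeeping; the only point that requires a little care is the heaviness of $\rho$ --- specifically, turning unboundedness of $\eta$ on complete graphs into the (a priori not obvious) statement that the $(\eta,d)$-modulator number of $K_n$ grows without bound, which is where monotonicity of $\eta$ is used to extract the finite threshold $t^{\ast}$.
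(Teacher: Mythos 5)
Your proof is correct, and while its skeleton (establish that $\rho$ is heavy and inheritable, get the inclusions $\bigcup_{c}\mathcal{B}_{\alpha\text{-}\rhocm}\subseteq\mathcal{B}_{\alpha\text{-}\rho}$ and $\bigcup_{c}\mathcal{C}_{\rhocm}\subseteq\mathcal{C}_{\rho}$ from \cref{thm:alpha-rho at most alpha-rhocm}, and obtain the reverse inclusions via the $c=0$ modulator) matches the paper's, you deviate in two substantive places. First, for unboundedness of $\rho$ on complete graphs the paper argues by contradiction using inheritability of $\eta$ (if $\rho(K)\le c$ then $\eta(K)\le\eta(K-S)+|S|\le d+c$), whereas you use only monotonicity and unboundedness of $\eta$ on cliques to extract the finite threshold $t^{\ast}$ and get the explicit bound $\rho(K_n)\ge n-t^{\ast}$; this is valid, and since nothing else in your argument uses inheritability of $\eta$, it shows that the hypothesis ``\,$\eta$ inheritable'' could be dropped from the statement, a small strengthening the paper does not record. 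Second, for $\mathcal{C}_{\rho}=\bigcup_{c}\mathcal{C}_{\rhocm}$ the paper routes through awesomeness, invoking \cref{modulator is awesome} twice to identify $\mathcal{C}_{\rho}$ with $\mathcal{B}_{\alpha\text{-}\rho}$ and each $\mathcal{C}_{\rhocm}$ with $\mathcal{B}_{\alpha\text{-}\rhocm}$; you instead isolate the identity $\mu_{\rho,0}=\rho$ (as hyperparameters, since the $(\rho,0)$-modulators of $G$ are exactly its $(\eta,d)$-modulators, using that modulator numbers are nonnegative so $\rho(G-T)\le 0$ means $\rho(G-T)=0$) and read off $\mathcal{C}_{\rho}=\mathcal{C}_{\mu_{\rho,0}}\subseteq\bigcup_{c}\mathcal{C}_{\rhocm}$ directly. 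The paper uses this identity only implicitly and only at the level of $\alpha$-variants when proving $\mathcal{B}_{\alpha\text{-}\rho}\subseteq\mathcal{B}_{\alpha\text{-}\mu_{\rho,0}}$; stating it once as an equality of hyperparameters, as you do, handles both class-property equalities uniformly and avoids the detour through awesomeness.
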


\begin{proof}
First we show that $\rho$ is heavy.
Let $G$ be a graph and let $G'$ be an induced subgraph of $G$.
Fix a minimum-cardinality set $S\subseteq V(G)$ such that $\eta(G-S) \le d$.
Let $S' \coloneqq V(G')\cap S$.
Since $G'-S'$ is an induced subgraph of $G-S$ and $\eta$ is monotone under induced subgraphs, we have $\eta(G'-S')\le \eta(G-S) \le d$.
Hence, $\rho(G')\le |S'|\le |S| = \rho(G)$, implying that $\rho$ is monotone under induced subgraphs.
To show that $\rho$ is heavy, it remains to show that $\rho$ is unbounded on the class of complete graphs.
Suppose for a contradiction that there exists an integer $c$ such that $\rho(K)\le c$ for every complete graph $K$.
Since $\eta$ is heavy, there exists a complete graph $K$ such that $\eta(K) > c+d$.
As $\rho(K)\le c$, there exists a set $S\subseteq V(K)$ such that $|S|\le c$ and $\eta(K-S)\le d$.
Since $\eta$ is inheritable, we have  $\eta(K)\le \eta(K-S) + |S|\le d+c$, a contradiction.
Hence, $\rho$ is heavy.
By~\Cref{cor:inheritable}, $\rho$ is inheritable.

We now show the inclusion $\mathcal{B}_{\alpha\text{-}\rho}\subseteq \bigcup_c\mathcal{B}_{\alpha\text{-}\rhocm}$.
Suppose that $\mathcal{G}$ is a graph class with bounded $\alpha\text{-}\rho$, say $\alpha\text{-}\rho(G)\le k$ for all $G\in \mathcal{G}$.
We claim that $\mathcal{G}$ has bounded $\alpha\text{-}\mu_{\rho,0}$.
Consider an arbitrary graph $G\in \mathcal{G}$.
Then, there exists a set $S\subseteq V(G)$ such that $\alpha(G[S])\le k$ and $\eta(G-S)\le d$.
Since $\eta(G-S)\le d$, the graph $G-S$ satisfies $\rho(G-S) = 0$; consequently, $\alpha\text{-}\mu_{\rho,0}(G)\le \alpha(G[S])\le k$, showing that $\mathcal{G}$ has bounded $\alpha\text{-}\mu_{\rho,0}$, as claimed.
Hence, $\mathcal{B}_{\alpha\text{-}\rho}\subseteq \mathcal{B}_{\alpha\text{-}\mu_{\rho,0}}\subseteq \bigcup_c\mathcal{B}_{\alpha\text{-}\rhocm}$.

We therefore have $\mathcal{B}_{\alpha\text{-}\rho} = \bigcup_{c}\mathcal{B}_{\alpha\text{-}\rhocm}$.
To conclude the proof, we show that $\mathcal{C}_{\rho} = \bigcup_{c}\mathcal{C}_{\rhocm}$.
By \Cref{modulator is awesome}, $\rho$ is awesome. 
Hence, the class properties $\mathcal{C}_{\rho}$ and 
$\mathcal{B}_{\alpha\text{-}\rho}$ coincide.
Furthermore, since $\rho$ is heavy, for each integer $c$, the class properties $\mathcal{C}_{\rhocm}$ and $\mathcal{B}_{\alpha\text{-}\rhocm}$ coincide, again by \Cref{modulator is awesome}.
This implies that $\bigcup_{c}\mathcal{C}_{\rhocm} = \bigcup_{c}\mathcal{B}_{\alpha\text{-}\rhocm}$ and, hence,
\[\mathcal{C}_{\rho} = \mathcal{B}_{\alpha\text{-}\rho} = \bigcup_{c}\mathcal{B}_{\alpha\text{-}\rhocm}
= \bigcup_{c}\mathcal{C}_{\rhocm}\,,\]
as claimed.
\end{proof}

However, as we show next, the inclusions $\bigcup_{c}\mathcal{B}_{\alpha\text{-}\rhocm}\subseteq \mathcal{B}_{\alpha\text{-}\rho}$ and $\bigcup_{c}\mathcal{C}_{\rhocm}\subseteq \mathcal{C}_{\rho}$ can be strict in general.

\begin{proposition}\label{prop:alpha-mu-rho-c-bounded-strict}
Let $\rho$ be an inheritable and heavy hyperparameter such that $\alpha\text{-}\rho$ is bounded on the class of disjoint unions of complete graphs.
Then, ${\bigcup_{c}\mathcal{B}_{\alpha\text{-}\rhocm}\subsetneq \mathcal{B}_{\alpha\text{-}\rho}}$.
\end{proposition}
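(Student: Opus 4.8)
The plan is to combine the inclusion $\bigcup_{c}\mathcal{B}_{\alpha\text{-}\rhocm}\subseteq \mathcal{B}_{\alpha\text{-}\rho}$---which holds by \cref{thm:alpha-rho at most alpha-rhocm} since $\rho$ is inheritable---with an explicit hereditary class that separates the two families. The witness I would use is the class $\mathcal{D}$ of all disjoint unions of complete graphs (equivalently, the class of $P_3$-free graphs); it is hereditary, so boundedness of a parameter on $\mathcal{D}$ carries its expected meaning. By hypothesis $\alpha\text{-}\rho$ is bounded on $\mathcal{D}$, so $\mathcal{D}\in \mathcal{B}_{\alpha\text{-}\rho}$, and it remains to show that $\mathcal{D}$ does \emph{not} have bounded $\alpha\text{-}\rhocm$ for any integer $c$, i.e.\ that $\mathcal{D}\notin \bigcup_{c}\mathcal{B}_{\alpha\text{-}\rhocm}$.

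To this end, fix an integer $c$. Since $\rho$ is heavy, it is unbounded on complete graphs, so there is an integer $n_c$ with $\rho(K_{n_c}) > c$. For an arbitrary positive integer $r$, consider the graph $G_r = rK_{n_c}\in \mathcal{D}$, and let $V_1,\dots,V_r$ be the vertex sets of its connected components. The key step is the claim that every $(\rho,c)$-modulator $S$ of $G_r$ satisfies $\alpha(G_r[S])\ge r$. Indeed, if $S$ missed some component, say $S\cap V_i = \emptyset$, then $G_r[V_i] = K_{n_c}$ would be an induced subgraph of $G_r - S$, and hence $\rho(G_r - S)\ge \rho(K_{n_c}) > c$ by monotonicity of $\rho$ under induced subgraphs, contradicting $\rho(G_r - S)\le c$. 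Thus $S$ meets every $V_i$, and selecting one vertex of $S$ from each $V_i$ produces an independent set of size $r$ in $G_r[S]$, because vertices from distinct components are non-adjacent. Therefore $\alpha\text{-}\rhocm(G_r)$, being the minimum of $\alpha(G_r[S])$ over all $(\rho,c)$-modulators $S$ of $G_r$, is at least $r$. As $r$ was arbitrary, $\alpha\text{-}\rhocm$ is unbounded on $\mathcal{D}$, so $\mathcal{D}\notin \mathcal{B}_{\alpha\text{-}\rhocm}$; and since $c$ was arbitrary, $\mathcal{D}\notin \bigcup_{c}\mathcal{B}_{\alpha\text{-}\rhocm}$. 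Combined with the first paragraph, this gives $\bigcup_{c}\mathcal{B}_{\alpha\text{-}\rhocm}\subsetneq \mathcal{B}_{\alpha\text{-}\rho}$.

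I do not expect any serious obstacle here. The one point that needs care is that a hyperparameter $\rho$ need not be additive, nor behave like a maximum, on disjoint unions; consequently the contradiction in the key step must be extracted solely from monotonicity under induced subgraphs (applied to the untouched clique $K_{n_c}$), rather than from any structural behaviour of $\rho$ on disconnected graphs. It is also worth recording that $\alpha\text{-}\rho$ is not assumed to be monotone under induced subgraphs, which is precisely why it matters that $\mathcal{D}$ is hereditary when invoking the hypothesis that $\alpha\text{-}\rho$ is bounded on $\mathcal{D}$.
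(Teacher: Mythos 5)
Your proposal is correct and follows essentially the same route as the paper's proof: both establish the inclusion via \cref{thm:alpha-rho at most alpha-rhocm}, take the class of disjoint unions of complete graphs as the separating witness, and use heaviness (unboundedness on complete graphs plus monotonicity under induced subgraphs) to force every $(\rho,c)$-modulator of $rK_{n_c}$ to meet every component, yielding an independent set of size $r$. The only difference is cosmetic — your explicit remark about why only monotonicity (and not additivity) of $\rho$ is needed, which the paper leaves implicit.
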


\begin{proof}
Since $\rho$ is inheritable, \Cref{thm:alpha-rho at most alpha-rhocm} implies that ${\bigcup_{c}\mathcal{B}_{\alpha\text{-}\rhocm}\subseteq \mathcal{B}_{\alpha\text{-}\rho}}$.
Let $\mathcal{G}$ be the class of disjoint unions of complete graphs.
By assumption, $\mathcal{G}$ satisfies the class property $\mathcal{B}_{\alpha\text{-}\rho}$.
So it suffices to show that $\mathcal{G}\not\in \bigcup_{c}\mathcal{B}_{\alpha\text{-}\rhocm}$, that is, that for every integer $c$ the class $\mathcal{G}$ has unbounded $\alpha\text{-}\rhocm$.
This means that for every integer $c$ and every positive integer $k$ there exists a graph $G\in \mathcal{G}$ such that $\alpha\text{-}\rhocm(G)\ge k$.
Since $\rho$ is heavy, it is unbounded on the class of complete graphs; hence, there exists a complete graph $K$ such that $\rho(K)>c$. 
Let $G$ be the disjoint union of $k$ copies of $K$.
Consider an arbitrary $(\rho,c)$-modulator $S$ of $G$.
Then, for every component $C$ of $G$, the set $S$ contains at least one vertex from $C$, since otherwise the fact that $\rho$ is monotone under induced subgraphs would imply $\rho(G-S)\ge \rho(C)>c$, a contradiction with the choice of $S$.
It follows that $\alpha(G[S])\ge k$.
Since $S$ was arbitrary, it follows that $\alpha\text{-}\rhocm(G)\ge k$, as desired.
\end{proof}

Examples of parameters satisfying the conditions of \Cref{prop:alpha-mu-rho-c-bounded-strict} include clique number, treewidth, pathwidth, and treedepth.

\begin{proposition}\label{prop:clique-bounded-strict}
Let $\rho$ be an inheritable and heavy hyperparameter such that $\mathcal{C}_{\rho} = \bigcup_{c}\mathcal{C}_{\rhocm}$.
Then, $\rho$ is awesome.
\end{proposition}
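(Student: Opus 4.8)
The plan is to prove the statement by establishing the two inclusions $\mathcal{B}_{\alpha\text{-}\rho}\subseteq \mathcal{C}_{\rho}$ and $\mathcal{C}_{\rho}\subseteq \mathcal{B}_{\alpha\text{-}\rho}$, which together say precisely that $\rho$ is awesome. Note first that since inheritability is only defined for basic hyperparameters, $\rho$ is a basic hyperparameter, so \cref{prop:main} applies and gives the inclusion $\mathcal{B}_{\alpha\text{-}\rho}\subseteq \mathcal{C}_{\rho}$ for free. Hence all the work lies in the reverse inclusion $\mathcal{C}_{\rho}\subseteq \mathcal{B}_{\alpha\text{-}\rho}$, and I would obtain it purely by chaining together earlier results of this section.

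For the reverse inclusion, I would start from the hypothesis $\mathcal{C}_{\rho}=\bigcup_{c}\mathcal{C}_{\rhocm}$ and rewrite each term on the right. Since $\rho$ is heavy, \cref{modulator is awesome} applies to $\rhocm$ for every integer $c$, so $\mathcal{C}_{\rhocm}=\mathcal{B}_{\alpha\text{-}\rhocm}$ for every $c$, and therefore $\mathcal{C}_{\rho}=\bigcup_{c}\mathcal{B}_{\alpha\text{-}\rhocm}$. Next, since $\rho$ is inheritable, \cref{rho-prop2} of \cref{thm:alpha-rho at most alpha-rhocm} gives $\mathcal{B}_{\alpha\text{-}\rhocm}\subseteq \mathcal{B}_{\alpha\text{-}\rho}$ for every integer $c$, hence $\bigcup_{c}\mathcal{B}_{\alpha\text{-}\rhocm}\subseteq \mathcal{B}_{\alpha\text{-}\rho}$. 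Combining these yields $\mathcal{C}_{\rho}\subseteq \mathcal{B}_{\alpha\text{-}\rho}$.

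Putting the two inclusions together gives $\mathcal{C}_{\rho}=\mathcal{B}_{\alpha\text{-}\rho}$; equivalently, in the notation of \cref{fig questions}, the chain $\mathcal{C}_{\rho}=\bigcup_{c}\mathcal{C}_{\rhocm}=\bigcup_{c}\mathcal{B}_{\alpha\text{-}\rhocm}\subseteq \mathcal{B}_{\alpha\text{-}\rho}\subseteq \mathcal{C}_{\rho}$ collapses to equalities throughout, so $\rho$ is awesome. There is essentially no obstacle here: the statement is a bookkeeping consequence of \cref{prop:main}, of \cref{modulator is awesome} (which is where heaviness is used), and of \cref{thm:alpha-rho at most alpha-rhocm} (which is where inheritability is used). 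The only point to be mildly careful about is that "awesome" is defined only for basic hyperparameters, which is guaranteed in the present setting precisely because $\rho$ is assumed inheritable.
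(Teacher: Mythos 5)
Your proposal is correct and follows essentially the same route as the paper: heaviness plus \cref{modulator is awesome} gives $\mathcal{C}_{\rhocm}=\mathcal{B}_{\alpha\text{-}\rhocm}$, inheritability plus \cref{thm:alpha-rho at most alpha-rhocm} gives $\bigcup_c\mathcal{B}_{\alpha\text{-}\rhocm}\subseteq\mathcal{B}_{\alpha\text{-}\rho}$, and the hypothesis closes the chain $\mathcal{C}_{\rho}\subseteq\mathcal{B}_{\alpha\text{-}\rho}$. The only cosmetic difference is that you also spell out the inclusion $\mathcal{B}_{\alpha\text{-}\rho}\subseteq\mathcal{C}_{\rho}$ from \cref{prop:main}, which the paper treats as already established in the definition of awesomeness.
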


\begin{proof}
Since $\rho$ is heavy, by \cref{modulator is awesome}, the $(\rho,c)$-modulator number $\rhocm$ is awesome for every integer~$c$, that is, $\mathcal{C}_{\rhocm} = \mathcal{B}_{\alpha\text{-}\rhocm}$.
Consequently, $\bigcup_c\mathcal{C}_{\rhocm} = \bigcup_c\mathcal{B}_{\alpha\text{-}\rhocm}$.
Furthermore, since $\rho$ is inheritable, \Cref{thm:alpha-rho at most alpha-rhocm} implies that $\bigcup_c\mathcal{B}_{\alpha\text{-}\rhocm} \subseteq \mathcal{B}_{\alpha\text{-}\rho}$.
It follows that 
\[\mathcal{C}_{\rho} = \bigcup_{c}\mathcal{C}_{\rhocm}
= \bigcup_c\mathcal{B}_{\alpha\text{-}\rhocm} \subseteq \mathcal{B}_{\alpha\text{-}\rho}\,,\]
that is, $\rho$ is awesome.
\end{proof}

Observe that by \Cref{prop:clique-bounded-strict}, for every inheritable and heavy hyperparameter $\rho$ and integer~$c$, \Cref{prop:modulators-equality}  refines \Cref{modulator is awesome} for the  $(\rho,c)$-modulator number $\rhocm$.
Furthermore, since by \Cref{thm:alpha-rho at most alpha-rhocm}, every inheritable hyperparameter $\rho$ satisfies $\bigcup_{c}\mathcal{C}_{\rhocm}\subseteq \mathcal{C}_{\rho}$, \Cref{prop:clique-bounded-strict} has the following consequence.

\begin{corollary}\label{cor:clique-bounded-strict}
Let $\rho$ be an awful, inheritable, and heavy hyperparameter.
Then, $\bigcup_{c}\mathcal{C}_{\rhocm}\subsetneq \mathcal{C}_{\rho}$.
\end{corollary}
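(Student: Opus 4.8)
The plan is to obtain this as an immediate consequence of \Cref{prop:clique-bounded-strict} via its contrapositive. First I would recall that the inclusion $\bigcup_{c}\mathcal{C}_{\rhocm}\subseteq \mathcal{C}_{\rho}$ is already established: since $\rho$ is inheritable, \Cref{thm:alpha-rho at most alpha-rhocm}\cref{rho-prop3}, applied for each integer $c$, shows that every graph class with clique-bounded $\rhocm$ also has clique-bounded $\rho$; taking the union over all $c$ yields the containment. This is precisely the observation recorded in the paragraph preceding the corollary, so only strictness remains to be argued.

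For strictness I would argue by contradiction. Suppose $\mathcal{C}_{\rho} = \bigcup_{c}\mathcal{C}_{\rhocm}$. By hypothesis $\rho$ is both inheritable and heavy, so the assumptions of \Cref{prop:clique-bounded-strict} are met, and that proposition concludes that $\rho$ is awesome. This contradicts the standing assumption that $\rho$ is awful. Hence equality cannot hold, and combined with the inclusion from the previous paragraph we conclude $\bigcup_{c}\mathcal{C}_{\rhocm}\subsetneq \mathcal{C}_{\rho}$.

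I expect essentially no obstacle here: all the substantive work has been front-loaded into \Cref{modulator is awesome} (heaviness makes each $\rhocm$ awesome), \Cref{thm:alpha-rho at most alpha-rhocm} (inheritability gives the left-to-right inclusions), and \Cref{prop:clique-bounded-strict} (which bundles these to transfer awesomeness from the modulators back to $\rho$ under the equality hypothesis). The only point requiring care is to invoke \Cref{prop:clique-bounded-strict} in the correct logical direction --- it is phrased as ``equality $\Rightarrow$ $\rho$ awesome'', and we use its contrapositive ``$\rho$ awful $\Rightarrow$ not equality'', upgrading ``not equality'' to ``strict inclusion'' using the always-valid containment $\bigcup_{c}\mathcal{C}_{\rhocm}\subseteq \mathcal{C}_{\rho}$.
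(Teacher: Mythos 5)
Your proposal is correct and matches the paper's own (implicit) argument exactly: the inclusion comes from \Cref{thm:alpha-rho at most alpha-rhocm} via inheritability, and strictness follows from the contrapositive of \Cref{prop:clique-bounded-strict} using awfulness. Nothing further is needed.
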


Examples of parameters satisfying the conditions of \Cref{cor:clique-bounded-strict} include treewidth, pathwidth, and treedepth (see \cref{cor:inheritable} along with \cref{tw-pw-awful} and~\cite{CT24}).

\subsection{Some applications of awesomeness}\label{sec:applications}

We now derive some consequences of \cref{cor:modulator-examples}, giving sufficient conditions for awesomeness. 

\subsubsection{Modulators to treewidth}

We start with the case when $\mu$ is a modulator to bounded treewidth, that is, $\mu=\twcm$ for some $c\in \mathbb{Z}$.
In this case, we obtain the following.

\begin{theorem}\label{thm:tw-modulator-omega-bdd}
Let $c \in \mathbb{Z}$, let $\mu$ be the $(\tw,c)$-modulator number, and let $\mathcal{G}$ be a $(\mu,\omega)$-bounded graph class.
Then $\mathcal{G}$ has bounded $\alpha$-treewidth.
\end{theorem}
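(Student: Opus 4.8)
The plan is to obtain the statement as a short corollary of two facts already in hand: that the $(\tw,c)$-modulator number is \emph{awesome} (\Cref{cor:modulator-examples}, a consequence of \Cref{modulator is awesome}), and that treewidth is an \emph{inheritable} hyperparameter (\Cref{cor:inheritable}). Throughout, write $\mu = \twcm$, so that the hypothesis ``$\mathcal{G}$ is $(\mu,\omega)$-bounded'' says exactly that $\mathcal{G}$ has clique-bounded $\mu$, i.e.\ $\mathcal{G}\in\mathcal{C}_{\mu}$.

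First I would invoke awesomeness. Treewidth is heavy, being monotone under induced subgraphs and unbounded on the class of complete graphs, so $\mu$ is awesome by \Cref{modulator is awesome} (this is precisely \Cref{cor:modulator-examples} for $\rho=\tw$). Hence $\mathcal{C}_{\mu}=\mathcal{B}_{\alpha\text{-}\mu}$, and from $\mathcal{G}\in\mathcal{C}_{\mu}$ we get that $\mathcal{G}$ has bounded $\alpha\text{-}\mu$: there is a constant $k$ such that every graph $G$ in the hereditary closure of $\mathcal{G}$ admits a $(\tw,c)$-modulator $S$ with $\alpha(G[S])\le k$. Next I would pass this boundedness back from the modulator number to treewidth itself. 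Since $\tw$ is inheritable, \Cref{thm:alpha-rho at most alpha-rhocm} (specifically, \cref{rho-prop2}), applied with $\rho=\tw$ whose $(\rho,c)$-modulator number is exactly $\mu$, yields that bounded $\alpha\text{-}\mu$ on $\mathcal{G}$ implies bounded $\alpha\text{-}\tw$ on $\mathcal{G}$. Explicitly, the inheritability inequality $\alpha\text{-}\tw(G)\le \tw(G-S)+\alpha(G[S])$, applied to the modulator $S$ from the previous step, gives $\alpha\text{-}\tw(G)\le c+k$ for every $G$ in the hereditary closure of $\mathcal{G}$. As the $\alpha$-variant of the canonical hyperparameterisation of treewidth used in this paper coincides, up to an irrelevant additive offset, with the tree-independence number, this is exactly the assertion that $\mathcal{G}$ has bounded $\alpha$-treewidth.

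There is no real obstacle here: the argument is essentially bookkeeping, chaining \Cref{cor:modulator-examples} and \Cref{cor:inheritable} through \Cref{thm:alpha-rho at most alpha-rhocm}. The only point deserving a moment's attention is that \emph{both} relevant properties of treewidth are being used, for different purposes: heaviness, to get awesomeness of $\mu$, and inheritability, to transfer boundedness from $\mu$ back to $\tw$; both are established earlier, so the chain closes. (For the few degenerate values of $c$ the hypothesis is either vacuous or forces a very restricted class, and the conclusion holds trivially, so one may assume $c\ge 1$ without loss of generality.)
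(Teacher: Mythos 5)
Your proposal is correct and follows essentially the same route as the paper's proof: first apply \Cref{cor:modulator-examples} (awesomeness of $\mu=\twcm$, via heaviness of treewidth) to get bounded $\alpha\text{-}\mu$, then use inheritability of treewidth through the inequality $\alpha\text{-}\tw(G)\le \tw(G-S)+\alpha(G[S])\le c+k$. The only cosmetic difference is that you route the second step through \Cref{thm:alpha-rho at most alpha-rhocm}\ref{rho-prop2} while the paper applies \Cref{inheritable-lambda-rho at most} directly, but these amount to the same inequality.
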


\begin{proof}
\Cref{cor:modulator-examples} implies that $\mathcal{G}$ has bounded $\alpha$-$\mu$, that is, there exists an integer $k$ such that for each graph $G\in \mathcal{G}$ and each induced subgraph $G'$ of $G$, there exists a set $S\subseteq V(G')$ such that $\alpha(G'[S])\le k$ and $\tw(G'-S)\le c$.
We can apply \cref{inheritable-lambda-rho at most} with $\rho = \tw$, since $\tw$ is strongly inheritable by \cref{obs:inheritable}, and $\lambda = \alpha$.
This implies that $\alpha$-$\tw(G) \leq \tw(G - S) + \alpha(G[S]) \leq c+k$, and thus shows that $\mathcal{G}$ has bounded $\alpha$-treewidth, as claimed.
\end{proof}

\Cref{thm:tw-modulator-omega-bdd} implies that any $(\twcm,\omega)$-bounded graph class enjoys the good algorithmic properties of graph classes with bounded $\alpha$-treewidth.
In particular, Lima et al.~\cite{LMMORS24} showed that for any such graph class $\mathcal{G}$, the following problems are solvable in polynomial time when restricted to graphs $G\in \mathcal{G}$:
\begin{itemize}
    \item For every $k,r$ and a $\mathsf{CMSO}_2$ formula $\psi$, given a graph $G$ and a weight function on the vertices of $G$, find a maximum weight set $X \subseteq V(G)$ such that $G[X] \models \psi$ and $\omega(G[X]) \leq r$, or conclude that no such set exists.
    \item  For a fixed positive even integer $d$, given a collection of connected subgraphs of $G$, each equipped with a real weight, find a maximum weight subcollection of the subgraphs any two of which are at  distance at least $d$ from each other.
\end{itemize}
Both of the above results capture the \textsc{Maximum Weight Independent Set} and the \textsc{Maximum Weight Induced Matching} problems (also studied for graphs with bounded $\alpha$-treewidth in~\cite{dallard2022firstpaper,MR3775804}).
Furthermore, the $\mathsf{CMSO}_2$-based problem listed above encompasses many other problems, including \textsc{Graph Homomorphism} to a fixed target graph, \textsc{Maximum Induced Forest} (which is equivalent to~\textsc{Minimum Feedback Vertex Set}), \textsc{Maximum Induced Bipartite Subgraph} (which is equivalent to~\textsc{Minimum Odd Cycle Transversal}), \textsc{Maximum Induced Planar Subgraph}, and \textsc{Maximum Induced Cycle Packing}.

In particular, \Cref{thm:tw-modulator-omega-bdd} and the polynomial-time solvability of the \textsc{Maximum Weight Independent Set} problem for graph classes with bounded $\alpha$-treewidth provide a partial answer to the following question.
\begin{question}[Dallard et al.~{\cite[Question 8.3]{dallard2022secondpaper}}, see also~{\cite[Question 9.8]
{MR4334541}}]
\label{question:alpha-easy}
Is it true for every $(\tw,\omega)$-bounded graph class $\mathcal{G}$, there exists a polynomial-time algorithm for the \textsc{Maximum Weight Independent Set} problem when restricted to graphs in $\mathcal{G}$?
\end{question}

\noindent
Indeed, since treewidth is inheritable due to \cref{cor:inheritable}, we derive from \cref{thm:alpha-rho at most alpha-rhocm} \ref{rho-prop1} that, for any integer $c$, any graph class with bounded $(\tw,c)$-modulator number has bounded treewidth. Therefore, 
the answer to \cref{question:alpha-easy} is affirmative if the treewidth is replaced by any $(\tw,c)$-modulator number. 
Formally, we have the following result, providing an infinite sequence of graph parameters whose clique-boundedness implies polynomial-time solvability of the \textsc{Maximum Weight Independent Set} problem.

\begin{theorem}\label{thm:MWIS}
For every integer $c$ and every $(\mu,\omega)$-bounded graph class $\mathcal{G}$, where $\mu$ is the $(\tw,c)$-modulator number, there exists a polynomial-time algorithm for the \textsc{Maximum Weight Independent Set} problem when restricted to graphs in $\mathcal{G}$.
\end{theorem}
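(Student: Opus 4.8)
The plan is to reduce the statement to the known fact that \textsc{Maximum Weight Independent Set} is solvable in polynomial time on every graph class of bounded $\alpha$-treewidth. First I would invoke \Cref{thm:tw-modulator-omega-bdd}: since $\mu$ is the $(\tw,c)$-modulator number and $\mathcal{G}$ is $(\mu,\omega)$-bounded, the class $\mathcal{G}$ has bounded $\alpha$-treewidth, say $\alpha\text{-}\tw(G)\le k$ for some constant $k$ depending only on $\mathcal{G}$ and $c$ and for all $G\in\mathcal{G}$. (Recall that \Cref{thm:tw-modulator-omega-bdd} is itself obtained by combining \Cref{cor:modulator-examples}, which yields bounded $\alpha\text{-}\mu$, with \Cref{inheritable-lambda-rho at most} applied to $\rho=\tw$ and $\lambda=\alpha$.)

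The second step is to turn this bound into an algorithm. The delicate point is that bounding $\alpha\text{-}\tw(G)$ only guarantees the \emph{existence} of a tree decomposition of $G$ in which every bag induces a subgraph of independence number at most $k$, whereas to run the dynamic programming one needs such a decomposition to be \emph{computable}. This is precisely what the approximation algorithm of Dallard, Fomin, Golovach, Korhonen, and Milani\v{c}~\cite{DFGKM25} supplies: on input $(G,k)$ it either certifies that $\alpha\text{-}\tw(G)>k$ or returns a tree decomposition of $G$ whose independence number is bounded by a function of $k$, in time that is polynomial in $|V(G)|$ once $k$ is fixed. Feeding such a decomposition into the polynomial-time dynamic programming algorithm for \textsc{Maximum Weight Independent Set} on graphs equipped with a tree decomposition of bounded independence number~\cite{dallard2022firstpaper,LMMORS24} yields the required algorithm: for $G\in\mathcal{G}$ the first stage necessarily succeeds since $\alpha\text{-}\tw(G)\le k$, and the second stage runs in polynomial time, with the degree of the polynomial depending only on $k$ and hence only on $\mathcal{G}$ and $c$.

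Composing the two stages gives a single polynomial-time algorithm that is correct on every input from $\mathcal{G}$ (together with its vertex-weight function); note that the algorithm never needs to test membership in $\mathcal{G}$, it only uses the promised constant $k$. The only genuine obstacle is the existence-versus-constructiveness issue discussed above, and it is already resolved in the literature via~\cite{DFGKM25}; the remainder is an assembly of \Cref{thm:tw-modulator-omega-bdd} with the algorithmic toolkit for tree-independence number. I would also remark that the same argument, using the broader results of Lima et al.~\cite{LMMORS24}, shows that for such classes $\mathcal{G}$ the whole $\mathsf{CMSO}_2$-based family of problems listed above is solvable in polynomial time, of which \textsc{Maximum Weight Independent Set} is a special case.
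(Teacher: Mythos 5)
Your proposal is correct and follows essentially the same route as the paper: it derives bounded $\alpha$-treewidth for $\mathcal{G}$ from \Cref{thm:tw-modulator-omega-bdd} and then invokes the known polynomial-time algorithms for \textsc{Maximum Weight Independent Set} on classes of bounded tree-independence number. Your explicit attention to the existence-versus-computability of the decomposition (resolved via~\cite{DFGKM25}) is a welcome detail that the paper leaves implicit in its citations.
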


Recall that for $c\in \{1,2\}$, the $(\tw,c)$-modulator number coincides with the vertex cover number and the feedback vertex set number, respectively.
The result of \Cref{thm:MWIS} for $c = 1$ is not new, as for every $(\vc,\omega)$-bounded graph class $\mathcal{G}$ there exists an integer $n$ such that every graph in $\mathcal{G}$ is $nK_2$-free (see, e.g., \cref{nK2-Knn}), and this is known to imply polynomial-time solvability of the \textsc{Maximum Weight Independent Set} problem (see, e.g.,~\cite{MR3695266}).
On other hand, to the best of our knowledge, the result of \Cref{thm:MWIS} is new for all $c\ge 2$.
In particular, for the case $c = 2$, we obtain the following.

\begin{corollary}\label{cor:fvs}
For every $(\fvs,\omega)$-bounded graph class $\mathcal{G}$, there exists a polynomial-time algorithm for the \textsc{Maximum Weight Independent Set} problem when restricted to graphs in $\mathcal{G}$.
\end{corollary}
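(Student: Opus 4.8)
The plan is to obtain this as the special case $c = 2$ of \Cref{thm:MWIS}, once the $(\tw,2)$-modulator number is identified with the feedback vertex set number. First I would recall the treewidth convention fixed in \Cref{sec:prelim}, where the width of a tree decomposition is the largest bag size rather than the largest bag size minus one. Under this convention one has, for every graph $G$, that $\tw(G)\le 2$ if and only if $G$ is a forest: a forest admits a tree decomposition whose bags are its edges together with a singleton bag for each isolated vertex, while any graph containing a cycle has treewidth at least $2$ in the standard convention and hence at least $3$ here. Consequently a set $S\subseteq V(G)$ satisfies $\tw(G-S)\le 2$ exactly when $G-S$ is a forest, i.e.\ when $S$ is a feedback vertex set of $G$; this is precisely the identity $\mu_{\tw,2}=\fvs$ already recorded in \Cref{sec:awesome} (in the discussion preceding \Cref{cor:awesome-examples}).

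With this identification in hand, a graph class $\mathcal{G}$ is $(\fvs,\omega)$-bounded if and only if it is $(\mu,\omega)$-bounded for $\mu=\mu_{\tw,2}$, which is exactly the hypothesis of \Cref{thm:MWIS} with $c=2$. Invoking \Cref{thm:MWIS} then produces a polynomial-time algorithm for \textsc{Maximum Weight Independent Set} restricted to graphs in $\mathcal{G}$, establishing the corollary. (For completeness one could also trace through the chain: by \Cref{thm:tw-modulator-omega-bdd} such a class has bounded $\alpha$-treewidth, and \textsc{Maximum Weight Independent Set} is polynomial-time solvable on classes of bounded $\alpha$-treewidth; but it is cleaner to cite \Cref{thm:MWIS} directly.)

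The only point requiring any care is the correspondence between $(\tw,2)$-modulators and feedback vertex sets, which hinges on the nonstandard normalisation of treewidth used throughout the paper; once this is settled, the statement is an immediate instantiation of \Cref{thm:MWIS} and presents no further obstacle. It is worth noting, as the surrounding text does, that the analogous case $c=1$ (where $\mu_{\tw,1}=\vc$) already follows from known results on $nK_2$-free graphs, whereas the case $c=2$ treated here appears to be new.
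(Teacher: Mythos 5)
Your proposal is correct and follows exactly the paper's route: the corollary is obtained as the case $c=2$ of \Cref{thm:MWIS} after identifying the $(\tw,2)$-modulator number with the feedback vertex set number under the paper's normalisation of treewidth, which is precisely the justification given in the text preceding the corollary. The extra care you take in verifying that $\tw(G)\le 2$ characterises forests under this convention is a sound (if brief) elaboration of a point the paper relegates to a footnote.
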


We next complement the case $c = 1$ of \Cref{thm:tw-modulator-omega-bdd} with a characterization in terms of forbidden induced subgraphs.

\subsubsection{Vertex cover number}

The fact that the vertex cover number is awesome leads to a characterization of hereditary graph classes with bounded $\alpha$-vertex cover number.
To derive this result, we recall the following Ramsey-type result.

\begin{theorem}[Curticapean and Marx~\cite{DBLP:conf/focs/CurticapeanM14},  
Lozin~\cite{MR4508171}]\label{thm:matchings}
For any three positive integers $p$, $q$, and $r$ there exists an integer $s$ such that any graph containing a matching of size $s$ contains either $K_p$ or $K_{q,q}$ or $rK_2$ as an induced subgraph.
\end{theorem}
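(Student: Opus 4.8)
The plan is to pull out a highly structured sub-collection of the given matching by one application of the multicolour Ramsey theorem (the finiteness of the multicolour Ramsey numbers for pairs follows from the two-colour version stated above by merging colours), and then to conclude by a short case analysis. Fix a matching $M=\{e_1,\ldots,e_s\}$ of size $s$ in $G$ and, for each $i$, label the two endpoints of $e_i$ arbitrarily as $a_i$ and $b_i$. Using the natural linear order on $\{1,\ldots,s\}$, colour each pair $\{i,j\}$ with $i<j$ by the set $C(\{i,j\})\subseteq\{11,12,21,22\}$ recording which of the four ``cross pairs'' $\{a_i,a_j\}$, $\{a_i,b_j\}$, $\{b_i,a_j\}$, $\{b_i,b_j\}$ are edges of $G$, where position $xy$ refers to the pair joining the $x$-th endpoint of the lower-indexed edge to the $y$-th endpoint of the higher-indexed one. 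This is a colouring of the $2$-element subsets of $\{1,\ldots,s\}$ by $16$ colours, so, for $s$ large enough as a function of a parameter $N$ to be fixed below, Ramsey's theorem yields a set $I$ of $N$ indices on which $C$ is constant, equal to some fixed set $C$; relabel so that $I=\{1,\ldots,N\}$.

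Now I would go through the possibilities for $C$. If $C=\emptyset$, then $G$ restricted to $\{a_i,b_i : i\in I\}$ is precisely $NK_2$, which contains an induced $rK_2$ as soon as $N\ge r$. If $11\in C$ then $\{a_i : i\in I\}$ is a clique of size $N$, and symmetrically if $22\in C$ then $\{b_i:i\in I\}$ is; in either case we obtain an induced $K_p$ once $N\ge p$. A quick inspection shows that $11\in C$ or $22\in C$ holds in every case with $|C|\ge 3$, in every case with $|C|=2$ except $C=\{12,21\}$, and for $C=\{11\}$ and $C=\{22\}$. Hence, having already dealt with $C=\emptyset$, only $C\in\{\{12,21\},\{12\},\{21\}\}$ remain.

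For $C=\{12,21\}$ the matching edges give $a_i\sim b_i$, position $12$ gives $a_i\sim b_j$ whenever $i<j$, and position $21$ gives $a_i\sim b_j$ whenever $i>j$, while the $a_i$'s are pairwise non-adjacent and so are the $b_i$'s; thus $\{a_i,b_i:i\in I\}$ induces $K_{N,N}$, containing $K_{q,q}$ when $N\ge q$. For $C=\{12\}$ (the case $C=\{21\}$ being symmetric) one checks that $a_i$ is adjacent to $b_j$ exactly when $i\le j$ and that there are no other edges among $\{a_i,b_i:i\in I\}$, i.e.\ this set induces a half-graph; selecting $a_1,\ldots,a_q$ together with $b_q,\ldots,b_{2q-1}$ then gives an induced $K_{q,q}$, provided $N\ge 2q$. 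Taking $N=\max\{p,2q,r\}$ therefore settles every case, and it suffices to let $s$ be the $16$-colour Ramsey number guaranteeing a monochromatic set of size $N$.

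The only real subtleties I anticipate are bookkeeping ones. First, the endpoint labels $a_i,b_i$ are arbitrary, so the colouring and the case analysis must be phrased so as not to depend on this choice — this is exactly what the fixed linear order on the edges and the asymmetric positions $12$ versus $21$ take care of, making $C(\{i,j\})$ well defined on unordered pairs. Second, the two ``half-graph'' cases $C=\{12\}$ and $C=\{21\}$ are the ones where no clique, biclique, or induced matching is immediately visible; here one instead has to observe that a half-graph on $N+N$ vertices already contains an induced $K_{q,q}$ once $N$ is a little larger than $2q$.
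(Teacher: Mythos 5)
The paper does not prove this statement at all---it is quoted as a known result of Curticapean--Marx and Lozin and used as a black box---so there is no in-paper proof to compare against. Your argument is a correct, self-contained proof, and it follows the standard route taken in the cited sources: colour pairs of matching edges by their adjacency type, apply multicolour Ramsey to get a homogeneous sub-matching, and run a case analysis on the constant type. All the cases check out: $C=\emptyset$ gives an induced $rK_2$; any type containing $11$ or $22$ gives a clique among the $a_i$'s or $b_i$'s; $C=\{12,21\}$ gives $K_{N,N}$ outright; and in the two half-graph cases $C=\{12\}$, $C=\{21\}$ your choice of $a_1,\dots,a_q$ versus $b_q,\dots,b_{2q-1}$ correctly extracts an induced $K_{q,q}$ (the $a$'s and $b$'s are independent sets since $11,22\notin C$, and $a_i\sim b_j$ holds exactly when $i\le j$). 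Taking $N=\max\{p,2q,r\}$ and $s$ the corresponding $16$-colour Ramsey number closes the proof; no gaps.
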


\begin{theorem}\label{nK2-Knn}
Let $\mathcal{G}$ be a hereditary graph class.
Then, the following conditions are equivalent.
\begin{enumerate}
    \item $\mathcal{G}$ is $(\vc,\omega)$-bounded.
    \item $\mathcal{G}$ has bounded $\alpha$-vertex cover number.
    \item There exists a positive integer $n$ such that $\mathcal{G}$ excludes the graphs $nK_2$ and $K_{n,n}$.
\end{enumerate}
\end{theorem}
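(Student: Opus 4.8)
The plan is to close the triangle with $(1)\Leftrightarrow(2)$, $(1)\Rightarrow(3)$, and $(3)\Rightarrow(1)$. For $(1)\Leftrightarrow(2)$ there is essentially nothing to do: for a hereditary class, being $(\vc,\omega)$-bounded is exactly the statement that $\mathcal{G}$ has clique-bounded vertex cover number, i.e.\ $\mathcal{G}\in\mathcal{C}_{\vc}$ (any binding function may be replaced by a nondecreasing one, and for a hereditary class it is irrelevant whether the bound is imposed on $G$ or on all its induced subgraphs), while $(2)$ says $\mathcal{G}\in\mathcal{B}_{\alpha\text{-}\vc}$; since the vertex cover number is awesome by \Cref{cor:awesome-examples} --- equivalently, since the vertex covers of a graph are precisely its $(\tw,1)$-modulators, by \Cref{cor:modulator-examples} with $\rho=\tw$ and $c=1$ --- we have $\mathcal{C}_{\vc}=\mathcal{B}_{\alpha\text{-}\vc}$, so $(1)$ and $(2)$ coincide. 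For $(1)\Rightarrow(3)$, the key observation is that $nK_2$ and $K_{n,n}$ both have clique number $2$ and vertex cover number $n$: if $f$ is a binding function witnessing $(1)$, then an induced copy of $nK_2$ or of $K_{n,n}$ in some graph of $\mathcal{G}$ would itself lie in $\mathcal{G}$ by hereditariness, forcing $n\le f(2)$; hence $\mathcal{G}$ excludes both $nK_2$ and $K_{n,n}$ once $n:=f(2)+1$.

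It remains to prove $(3)\Rightarrow(1)$. Assume $\mathcal{G}$ excludes $nK_2$ and $K_{n,n}$, fix $G\in\mathcal{G}$, and set $k=\omega(G)$. Apply \Cref{thm:matchings} with $(p,q,r)=(k+1,n,n)$ to obtain an integer $s$, depending only on $k$ and $n$, such that every graph with a matching of size $s$ contains $K_{k+1}$, $K_{n,n}$, or $nK_2$ as an induced subgraph. Since $G$ contains none of these (the first because $\omega(G)=k$, the other two by assumption), its maximum matching has size at most $s-1$; as the set of endpoints of a maximum matching is a vertex cover, $\vc(G)\le 2(s-1)$. Because $n$ is fixed, $s$ is a function of $\omega(G)$ alone, so this exhibits a binding function for $\vc$ on $\mathcal{G}$, which is precisely $(1)$.

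The proof has no essential difficulty; the step needing the most care is $(3)\Rightarrow(1)$, where one must convert the absence of large induced $nK_2$ and $K_{n,n}$ into a bound on $\vc$ in terms of $\omega$. The mechanism is twofold: a large vertex cover forces a large matching (since the vertex cover number is at most twice the matching number), and then the Ramsey-type \Cref{thm:matchings} extracts one of the three unavoidable induced subgraphs from a large matching --- it is essential here that a large clique $K_{k+1}$ is permitted as an outcome, since for $(\vc,\omega)$-boundedness we only require a bound depending on $\omega(G)$, not an absolute one. A secondary point is to spell out, for $(1)\Leftrightarrow(2)$, why the paper's notion of $(\vc,\omega)$-boundedness coincides with membership in $\mathcal{C}_{\vc}$.
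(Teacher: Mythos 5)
Your proposal is correct and follows essentially the same route as the paper: the equivalence of (1) and (2) via the awesomeness of the vertex cover number (\Cref{cor:awesome-examples}), the implication $(1)\Rightarrow(3)$ via the observation that $nK_2$ and $K_{n,n}$ have clique number $2$ and vertex cover number $n$, and $(3)\Rightarrow(1)$ via \Cref{thm:matchings} applied with $(p,q,r)=(\omega(G)+1,n,n)$ together with the fact that the endpoints of a maximum matching form a vertex cover. No gaps.
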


\begin{proof}
\Cref{cor:awesome-examples} implies the equivalence of the first two conditions.

Assume now that the first condition holds, that is, $\mathcal{G}$ is $(\vc,\omega)$-bounded, and let $f:\mathbb{N}\to \mathbb{N}$ be a nondecreasing function such that for every graph $G$ in the class $\mathcal{G}$, every induced subgraph $G'$ of $G$ satisfies $\vc(G')\le f(\omega(G'))$.
For every positive integer $n$, the graphs $nK_2$ and $K_{n,n}$ have clique number~$2$, and vertex cover number $n$.
Therefore, $\mathcal{G}$ excludes the graphs $nK_2$ and $K_{n,n}$ for all $n\ge f(2)+1$.
Hence, the third condition holds.

Finally, assume that the third condition holds, that is, that there exists a positive integer $n$ such that $\mathcal{G}$ excludes the graphs $nK_2$ and $K_{n,n}$.
By \cref{thm:matchings}, for any three integers $p$, $q$, and $r$, there exists a smallest integer $s(p,q,r)$ such that every graph containing a matching of size $s(p,q,r)$ contains either $K_p$ or $K_{q,q}$ or $rK_2$ as an induced subgraph.
We show that $\mathcal{G}$ is $(\vc,\omega)$-bounded, more precisely, that every graph $G\in \mathcal{G}$ has a vertex cover with cardinality at most $f(\omega(G))$ where $f:\mathbb{N}\to \mathbb{N}$ is the function defined by $f(k) = 2(s(k+1,n,n)-1)$, for all $k\in \mathbb{N}$.
Let $G$ be a graph in $\mathcal{G}$ and let $k$ be the clique number of $G$.
Since $G$ has no clique of size $k+1$ nor an induced subgraph isomorphic to $K_{n,n}$ or $nK_2$, \cref{thm:matchings} implies that $G$ does not contain a matching of size $s(k+1,n,n)$.
Let $M$ be a maximum matching in $G$ and let $S$ be the set of vertices of $G$ saturated by $M$.
Then $S$ is a vertex cover in $G$ with cardinality at most $2(s(k+1,n,n)-1)$.
Therefore, $\vc(G)\le f(k)$, as claimed.
\end{proof}

Since every bounded $\alpha$-vertex cover number implies bounded $\alpha$-treewidth (see \Cref{cor:chain-tw-vc}), \Cref{nK2-Knn} provides a strengthening of the fact that, for any positive integer $n$, the class of $\{nK_2,K_{n,n}\}$-free graphs has bounded $\alpha$-treewidth.
The latter is a consequence of a more general result due to Abrishami et al.~\cite[Theorem 1.1]{abrishami2024excludingcliquebicliquegraphs}.

\subsubsection{Odd cycle transversal number}

We next explain an algorithmic implication of \Cref{cor:awesome-examples} (and its proof) in relation with \textsc{Minimum Odd Cycle Transversal}, the problem of computing a minimum odd cycle transversal in a given graph $G$.
In \cref{thm:MWIS-oct}, we generalize the fact that \textsc{Maximum Weight Independent Set} is solvable in polynomial time in any graph class with bounded odd cycle transversal number (as can be easily seen; see, e.g., the concluding remarks of~\cite{MR3197780}) or, more generally, in any class of graphs whose vertex set admits a partition into a bounded number of cliques and two independent sets, as shown by Alekseev and Lozin (see~\cite{ALEKSEEV2003351}).

\begin{lemma}\label{lem:MWIS-finding-oct}
For every positive integer $k$, there exists an algorithm that, given a graph $G$ that admits an odd cycle transversal $S$ with $\alpha(G[S])\le k$, computes such a set $S$ in polynomial time.
\end{lemma}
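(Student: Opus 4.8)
The plan is to design, for each fixed $k$, a polynomial-time algorithm, and the natural framework is iterative compression. First observe that the property ``$G$ admits an odd cycle transversal $S$ with $\alpha(G[S])\le k$'' is closed under taking induced subgraphs: if $S$ works for $G$ and $H$ is an induced subgraph of $G$, then $S':=S\cap V(H)$ satisfies $\alpha(H[S'])\le\alpha(G[S])\le k$, and $H-S'$ is an induced subgraph of the bipartite graph $G-S$, hence bipartite. So I would order the vertices of $G$ as $v_1,\dots,v_n$ and maintain, for each prefix $G_i:=G[\{v_1,\dots,v_i\}]$, an odd cycle transversal $S_i$ with $\alpha(G_i[S_i])\le k$, starting from $S_1=\emptyset$. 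When $v_{i+1}$ is added, the set $W:=S_i\cup\{v_{i+1}\}$ is an odd cycle transversal of $G_{i+1}$ with $\alpha(G_{i+1}[W])\le k+1$, so everything reduces to a \emph{compression step}: given a graph $H$, an odd cycle transversal $W$ of $H$ with $\alpha(H[W])\le k+1$, and the promise that $H$ admits an odd cycle transversal of independence number at most $k$, produce one. At the start of the compression step I would also test whether $\alpha(H)\le k$ (doable in time $n^{O(k)}$) and, if so, return $S=V(H)$.

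For the compression step I would follow the classical correspondence between odd cycle transversals and vertex separators: a set disjoint from $W$ whose removal makes $H$ bipartite, subject to a fixed bipartition of $W$ into its two colour classes, is exactly a separator between those two classes in a suitable auxiliary graph obtained by identifying each class, and such a separator can be computed by max-flow; transversals intersecting $W$ are reached by first guessing $W\cap S$. If one could afford to enumerate all bipartitions and ``kept'' subsets of $W$, one flow computation per guess would finish.

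The main obstacle is that $|W|$, and even the target $S$ itself, can be linear in $n$: for instance if $H$ is the disjoint union of $K_n$ and $k$ pairwise nonadjacent vertices, then $\alpha(H)=k+1$ so the test above does not apply, yet every odd cycle transversal $S$ with $\alpha(H[S])\le k$ must contain all but at most two vertices of the $K_n$. Hence the $2^{|W|}$ enumeration is not affordable, and, worse, what we need is a separator of bounded \emph{independence number} rather than bounded size, which is not directly a flow problem. I expect the technical heart of the proof to be showing that only a bounded ``interface'' of $W$ is relevant: because $\alpha(H[W])\le k+1$, Ramsey's theorem severely restricts how $W$ can meet the $(k+1)$-element independent sets of $H$, and one should be able to enumerate the polynomially many independent sets of size at most $k+1$ contained in $W$ (or in $H$) to pin down the independent set that must be destroyed, collapsing the remaining freedom to a bipartite structure handled by matching and flow. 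An essentially equivalent localisation is to enumerate the $O(n^k)$ candidates $I_0$ for a maximum independent set of $G[S]$; this forces $S\subseteq N_G[I_0]$ and forces $G-N_G[I_0]$ to be bipartite, after which one must solve a residual assignment problem on $N_G(I_0)$ whose only global constraint, $\alpha\big(G[\,I_0\cup(S\cap N_G(I_0))\,]\big)\le|I_0|$, is again controlled via Ramsey's theorem and the bound $k$. Either way, the crux is to avoid a branching blow-up proportional to $|S|$ by extracting from the bounded independence number only a bounded amount of genuinely nondeterministic information.
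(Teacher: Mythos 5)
Your proposal does not reach a complete proof. The paper's own argument is a one-line reduction: a graph admitting an odd cycle transversal $S$ with $\alpha(G[S])\le k$ is precisely a graph whose vertex set partitions into a part of independence number at most $k$ and a part inducing a bipartite graph, and computing such a partition in time $\mathcal{O}(|V(G)|^{6k+2})$ is exactly \cite[Lemma~8]{ALEKSEEV2003351}. The concrete observations you do make are correct: the property is hereditary; guessing a maximum independent set $I_0$ of $G[S]$ (of size at most $k$, hence $n^{O(k)}$ candidates) forces $S\subseteq N_G[I_0]$; and $G-N_G[I_0]$ must then be bipartite. But your argument stops exactly where the work begins. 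After fixing $I_0$ you must still decide, for each vertex of $N_G(I_0)$, whether to place it into $S$, subject to two coupled constraints pulling in opposite directions: $G-S$ must be bipartite (which pushes vertices into $S$), while $\alpha(G[S])\le k$ must hold (which keeps vertices out, since $N_G[I_0]$ itself may contain arbitrarily large independent sets --- take $I_0$ a single vertex of large degree). You assert that this residual problem ``is again controlled via Ramsey's theorem and the bound $k$'' and that the crux is ``extracting a bounded amount of genuinely nondeterministic information,'' but you give no algorithm and no proof for this step; it is precisely the content of the cited lemma, not a routine afterthought.

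Two further remarks. First, the iterative-compression wrapper buys you nothing: the compression step you reduce to (shrink an odd cycle transversal of independence number $k+1$ to one of independence number $k$) is not easier than the original problem, because $W$ has bounded independence number rather than bounded size, so the usual $2^{|W|}$ enumeration that makes compression useful is unavailable --- as you yourself note. Your ``essentially equivalent localisation'' via $I_0$ applies directly to $G$ and makes the whole compression scaffolding redundant. Second, if you want a self-contained proof rather than a citation, the missing ingredient is an argument (in the spirit of Alekseev and Lozin) that after guessing a bounded number of independent sets --- roughly, one of size at most $k$ inside $S$ and bounded witnesses for the two colour classes of $G-S$ --- the remaining membership decisions decouple into a polynomially solvable problem; merely invoking Ramsey's theorem does not supply this.
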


\begin{proof}
Let $S$ be an odd cycle transversal in $G$ with independence number at most $k$. Then $G$ admits a partition of its vertex set into two parts $S$ and $V(G) \setminus S$ such that $G[S]$ has independence number at most $k$ and $G-S$ is bipartite. 
It follows from~\cite[Lemma~8]{ALEKSEEV2003351} and its proof that, given a graph $G$ satisfying this property, a witnessing vertex partition  can be computed in $\mathcal{O}(|V(G)|^{6k+2})$.
\end{proof}

\begin{theorem}\label{thm:MWIS-oct}
For every $(\oct,\omega)$-bounded graph class $\mathcal{G}$, there exists a polynomial-time algorithm for the \textsc{Maximum Weight Independent Set} problem when restricted to graphs in $\mathcal{G}$.
\end{theorem}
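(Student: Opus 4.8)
The plan is to follow the same strategy that proves the vertex-cover case, but using the odd cycle transversal number in place of the vertex cover number. The key structural ingredient is the awesomeness of the odd cycle transversal number: by \Cref{cor:awesome-examples}, $\oct$ is an awesome graph parameter, so the hypothesis that $\mathcal{G}$ is $(\oct,\omega)$-bounded is equivalent to the statement that $\mathcal{G}$ has bounded $\alpha$-$\oct$. Concretely, there exists an integer $k$ such that every graph $G\in\mathcal{G}$ admits an odd cycle transversal $S$ with $\alpha(G[S])\le k$; here it is important that $\oct$, being $(\chi,2)$-modulator number, is realized as a basic hyperparameter via \Cref{hyperpar-optimisation}, so its $\alpha$-variant $\alpha\text{-}\oct(G)$ is exactly the minimum of $\alpha(G[S])$ over all odd cycle transversals $S$ of $G$.

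The algorithm then proceeds in two phases. First I would invoke \Cref{lem:MWIS-finding-oct}: since $\mathcal{G}$ has $\alpha$-$\oct$ bounded by the fixed constant $k$, every input graph $G\in\mathcal{G}$ admits an odd cycle transversal $S$ with $\alpha(G[S])\le k$, and the lemma yields a polynomial-time algorithm (running in time $\mathcal{O}(|V(G)|^{6k+2})$) that computes such a set $S$, together with a partition of $V(G)$ into $S$ and the bipartite remainder $G-S$. Second, I would solve \textsc{Maximum Weight Independent Set} on $G$ given this partition. This reduces to the classical fact of Alekseev and Lozin~\cite{ALEKSEEV2003351}: once the vertex set is split into a part inducing a subgraph of bounded independence number and a part inducing a bipartite graph, a maximum weight independent set can be found in polynomial time. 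The rough idea is to enumerate, over all of the $\mathcal{O}(|V(G)|^{k})$ possible independent subsets $I\subseteq S$ (which is polynomial since $\alpha(G[S])\le k$), and for each such $I$, delete $I$ and its neighbourhood, leaving a bipartite graph on which \textsc{Maximum Weight Independent Set} is solvable in polynomial time via a minimum vertex cover / maximum matching computation (König's theorem with weights, i.e., a min-cut argument); taking the best solution over all choices of $I$ gives the optimum.

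I expect the main (and only genuine) obstacle to be ensuring the constant $k$ is uniform over the whole class: this is precisely where awesomeness of $\oct$ is used, turning the qualitative hypothesis ``$(\oct,\omega)$-bounded'' into the usable quantitative statement ``$\alpha\text{-}\oct$ is bounded by a constant on $\mathcal{G}$''. Given that, both phases are standard, and the exponents in the running time depend only on $k$, hence only on the class $\mathcal{G}$, so the overall algorithm is polynomial. I would present the proof as: (i) apply \Cref{cor:awesome-examples} to get a constant $k$ with $\alpha\text{-}\oct(G)\le k$ for all $G\in\mathcal{G}$; (ii) apply \Cref{lem:MWIS-finding-oct} to compute a witnessing set $S$; (iii) invoke the Alekseev--Lozin partition algorithm to solve \textsc{Maximum Weight Independent Set}.
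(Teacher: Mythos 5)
Your proposal is correct and follows essentially the same route as the paper's proof: apply \Cref{cor:awesome-examples} to obtain a uniform constant $k$ with $\alpha\text{-}\oct(G)\le k$, compute a witnessing odd cycle transversal $S$ via \Cref{lem:MWIS-finding-oct}, then enumerate the $\mathcal{O}(|V(G)|^k)$ independent sets $I\subseteq S$ and solve \textsc{Maximum Weight Independent Set} on the bipartite graph $G[V\setminus S]-N(I)$ by a flow computation. The only cosmetic difference is that you phrase the final step as invoking the Alekseev--Lozin partition machinery, whereas the paper carries out the enumeration directly; the content is the same.
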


\begin{proof}
Fix $\mathcal{G}$ as in the theorem and let $G = (V,E)$ be a graph in $\mathcal{G}$ given together with some vertex weight function.
We describe a polynomial-time algorithm for computing a maximum weight independent set in~$G$.
By \cref{cor:awesome-examples}, there exists a constant $k = k_\mathcal{G}$ independent of $G$ such that $G$ admits an odd cycle transversal $S$ with $\alpha(G[S])\le k$.
Furthermore, by \cref{lem:MWIS-finding-oct}, such an odd cycle transversal $S$ can be computed in polynomial time.
For each independent set $I\subseteq S$, we compute a maximum weight independent set $I'$ in the bipartite graph $G[V\setminus S]-N(I)$.
Finally, among all such sets $I$, we return the set $I\cup I'$ with maximum weight.

For each of the $\mathcal{O}(|V(G)|^k)$ choices of $I$, the algorithm solves an instance of the \textsc{Maximum Weight Independent Set} problem in a bipartite graph, which can be reduced in polynomial time to solving an instance of the maximum flow problem (see~\cite{MR712925}).
Hence, the algorithm runs in polynomial time.

To see that the set returned by the algorithm is a maximum weight independent set in $G$, note that for any maximum weight independent set $J$ in $G$, writing $J = I \cup I'$ where $I = J\cap S$ and $I' = J\setminus S$, the set $I'$ is a maximum weight independent set in the bipartite graph $G[V\setminus S]-N(I)$.
Hence, a set of the same weight as $J$ will be detected by the algorithm in the iteration considering the independent set $I = J\cap S$.
\end{proof}

Since the feedback vertex set number of a graph is an upper bound on its odd cycle transversal number, \Cref{thm:MWIS-oct} generalizes \Cref{cor:fvs}.
Furthermore, \Cref{thm:MWIS-oct} applies in a context beyond \Cref{question:alpha-easy}, since it provides a graph parameter $\mu$ that is incomparable with treewidth and such that clique-bounded $\mu$ implies polynomial-time solvability of the \textsc{Maximum Weight Independent Set} problem.

\subsubsection{Tree-independence number}

We conclude this section by applying \cref{lem:tau-k-omega-alpha-general} to establish logarithmic tree-independence number in certain classes of   graphs.
Two vertex-disjoint cycles $C$ and $C'$ in a graph $G$ are said to be \emph{independent} if there is no edge between $C$ and $C'$. 
A graph is said to be \emph{$\mathcal{O}_k$-free} if it does not contain $k$ pairwise independent induced cycles, or equivalently, if it does not contain $k$ pairwise independent cycles.
Bonamy et al.~\cite{MR4723425} showed that for fixed $k$ and $\ell$, every $\mathcal{O}_k$-free graph not containing $K_{\ell,\ell}$ as a subgraph has logarithmic treewidth.
The following proposition provides a dense analogue of this result and provides an affirmative answer to \cite[Question 7.3]{HMV25}.

\begin{proposition}\label{prop:OK-free}
For every two positive integers $k$ and $t$ there exists a positive integer $c_{k,t}$ such that if $G$ is a $K_{t,t}$-free $\mathcal{O}_k$-free graph with at least two vertices, then $\tin(G)\le c_{k,t}\cdot \log |V(G)|+1$.
\end{proposition}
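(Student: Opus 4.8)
The plan is to combine the treewidth bound of Bonamy et al., a Ramsey-type passage from excluded \emph{induced} bicliques to excluded \emph{subgraph} bicliques, the modulator lemma \cref{lem:tau-k-omega-alpha-general}, and the strong inheritability of treewidth. Let $\mathcal{G}$ be the hereditary class of $K_{t,t}$-free $\mathcal{O}_k$-free graphs. First I would observe that a bound on the clique number upgrades the excluded induced biclique to an excluded subgraph biclique: if $G\in\mathcal{G}$ has $\omega(G)\le\omega$ and contained $K_{\ell,\ell}$ as a subgraph with sides $A$ and $B$, where $\ell:=R(\omega+1,t)$, then, since $\omega(G[A])\le\omega$ and $|A|=\ell$, Ramsey's theorem would give an independent set $A'\subseteq A$ with $|A'|=t$, and likewise an independent set $B'\subseteq B$ with $|B'|=t$; as every vertex of $A$ is adjacent to every vertex of $B$, the graph $G[A'\cup B']$ would be an induced copy of $K_{t,t}$, a contradiction. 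Hence every $G\in\mathcal{G}$ with $\omega(G)\le\omega$ is $\mathcal{O}_k$-free and has no $K_{\ell,\ell}$ subgraph, and so by the result of Bonamy et al.\ there is a nondecreasing function $\Phi$, depending only on $k$ and $t$, with $\tw(G)\le\Phi(\omega(G))\cdot\log|V(G)|$ for every $G\in\mathcal{G}$ having at least two vertices.

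The treewidth bound above carries an unwanted dependence on $\omega(G)$, and the second step removes it via \cref{lem:tau-k-omega-alpha-general}. I would apply that lemma with $\rho=\tw$, the nondecreasing function $h=\mathrm{id}$ (legitimate since $\omega(G)\le\tw(G)$ for every graph, a largest clique lying inside a single bag), $g(x)=\lceil\log x\rceil+1$, the constant $c=2$ — for which $\mu_{\tw,2}$ is the feedback vertex number $\fvs$ — and the hereditary class $\mathcal{G}$. The hypothesis of the lemma then amounts to: there is a nondecreasing $f$ with $\fvs(G)\le f(\omega(G))\cdot g(|V(G)|)$ for all $G\in\mathcal{G}$. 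Granting it, the lemma gives that every minimum feedback vertex set $S$ of any $G\in\mathcal{G}$ satisfies $\alpha(G[S])\le f(h(2)+1)\cdot g(|V(G)|)=f(3)\cdot g(|V(G)|)$. Since $\tw$ is strongly inheritable by \cref{obs:inheritable}, \cref{inheritable-lambda-rho at most} applied with $\lambda=\alpha$ and this set $S$ then yields
\[
\tin(G)\le\tw(G-S)+\alpha(G[S])\le 2+f(3)\cdot g(|V(G)|),
\]
where we used that $G-S$ is a forest, so $\tw(G-S)\le 2$; absorbing the additive constants into the logarithmic term using $|V(G)|\ge 2$, this is at most $c_{k,t}\cdot\log|V(G)|+1$ for a suitable $c_{k,t}$.

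I expect the hypothesis check of the second step to be the main obstacle. One must convert the Bonamy et al.\ treewidth bound (for bounded-clique members of $\mathcal{G}$) into a bound of the same order on a modulator to \emph{bounded} treewidth — concretely, on $\fvs(G)$ — and this genuinely uses $\mathcal{O}_k$-freeness, since the feedback vertex number is not controlled by treewidth in general. The intended route is roughly: a family of $\nu$ pairwise vertex-disjoint cycles in $G$ induces, by $\mathcal{O}_k$-freeness, an adjacency graph on the cycles whose independence number is less than $k$ and which therefore has $\Omega(\nu^2/k)$ edges; the K\H{o}v\'ari--S\'os--Tur\'an bound, available from $K_{\ell,\ell}$-subgraph-freeness, caps $\nu$, and an Erd\H{o}s--P\'osa-type estimate then bounds $\fvs(G)$ in terms of $\tw(G)$. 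Making this chain precise — in particular handling long cycles and keeping the resulting bound genuinely logarithmic in $|V(G)|$ — is the delicate point.
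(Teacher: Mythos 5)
Your overall architecture matches the paper's: the Ramsey upgrade from induced-$K_{t,t}$-freeness to $K_{\ell,\ell}$-subgraph-freeness with $\ell$ depending on $\omega(G)$, the application of \cref{lem:tau-k-omega-alpha-general} with $\rho=\tw$, $c=2$, $h=\mathrm{id}$, and $g$ logarithmic to control the independence number of a minimum feedback vertex set, and the final lift to $\tin(G)$ (the paper builds the tree decomposition of the forest $G-S$ explicitly and adds $S$ to every bag, getting $\alpha(G[S])+1$ rather than your $\alpha(G[S])+2$ via \cref{inheritable-lambda-rho at most}; both are fine). However, there is a genuine gap exactly where you flag the ``main obstacle'': the hypothesis of \cref{lem:tau-k-omega-alpha-general}, namely $\fvs(G)\le f(\omega(G))\cdot g(|V(G)|)$, is never established. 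The paper closes this by citing~\cite{MR4723425} in its feedback-vertex-set form: Bonamy et al.\ prove that every $\mathcal{O}_k$-free graph on at least two vertices with no $K_{\ell,\ell}$ subgraph satisfies $\fvs(G)\le b_k(\ell)\cdot\log|V(G)|$, and this logarithmic \emph{FVS} bound (not merely the treewidth bound) is the external input the whole proof rests on. You cite only the treewidth consequence and then attempt to rederive the FVS bound yourself.

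Your sketched rederivation would not work as stated. First, $\mathcal{O}_k$-freeness bounds the number of pairwise \emph{independent} cycles, not pairwise \emph{vertex-disjoint} ones, and the auxiliary cycle-adjacency graph having $\Omega(\nu^2/k)$ edges does not yield a $K_{\ell,\ell}$ subgraph of $G$ without substantial further work (this is essentially the content of the Bonamy et al.\ paper, not a routine K\H{o}v\'ari--S\'os--Tur\'an application). Second, the cycle packing number of these graphs is \emph{not} bounded by a constant: if it were, Erd\H{o}s--P\'osa would give $\fvs(G)=\mathcal{O}(1)$, whereas the $\log|V(G)|$ bound of~\cite{MR4723425} is known to be tight. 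Third, ``an Erd\H{o}s--P\'osa-type estimate bounds $\fvs(G)$ in terms of $\tw(G)$'' is false even with the structural assumptions in play (you acknowledge this yourself one sentence earlier). The fix is simply to invoke the logarithmic feedback vertex set bound of~\cite{MR4723425} directly, after which the rest of your argument goes through.
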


\begin{proof}
Recall that for a graph $G$, a feedback vertex set in $G$ is a set $S\subseteq V(G)$ such that $G-S$ is acyclic, and the feedback vertex set number of $G$, denoted by $\fvs(G)$, is the minimum cardinality of a feedback vertex set in $G$.
As shown by Bonamy et al.~\cite{MR4723425}, for every positive integer $k$ there exists a nondecreasing function $b_k \colon \mathbb{N} \rightarrow \mathbb{N}$ such that for every positive integer $\ell$, every $\mathcal{O}_k$-free graph $G$ with at least two vertices not containing $K_{\ell,\ell}$ as a subgraph satisfies $\fvs(G)\le b_k(\ell)\cdot \log |V(G)|$.
Fix positive integers $k$ and $t$, and let $\mathcal{G}$ be the class of $K_{t,t}$-free $\mathcal{O}_k$-free graphs.
Since every graph in $\mathcal{G}$ is $K_{t,t}$-free, Ramsey's theorem guarantees the existence of a nondecreasing function $g_t$ such that if $G\in \mathcal{G}$, then $G$ does not contain $K_{\ell,\ell}$ as a subgraph where $\ell = g_t(\omega(G))$.
Combining this with the aforementioned result of Bonamy et al., we obtain that if $G\in  \mathcal{G}$ has at least two vertices, then $\fvs(G)\le b_k(g_t(\omega(G)))\cdot \log |V(G)|$.
Hence, taking $g(x)=\log(x)$ for $x \geq 2$ and $g(1) = 1$, $\rho=\tw$, $c = 2$, $h(x) = x$, and letting $f$ be the composition of $b_k$ and $g_t$, we obtain that every graph $G\in \mathcal{G}$ satisfies $\rhocm(G)\le f(\omega(G))\cdot g(|V(G)|)$.
Hence, by \cref{lem:tau-k-omega-alpha-general},  for every graph $G\in \mathcal{G}$, every minimum feedback vertex set $S$ in $G$  satisfies $\alpha(G[S])\le f(3)\cdot g(|V(G)|)$.
The graph $G-S$ is a forest, and hence, admits a tree decomposition $\mathcal T$ in which every bag is a clique; adding $S$ to every bag of $\mathcal T$ implies that $\tin(G) \le f(3)\cdot g(|V(G)|)+1$. 
Recalling the definition of $g$ and since $\alpha$-treewidth of a single-vertex graph is 1, we can conclude that $\tin(G)\le f(3)\cdot \log |V(G)| + 1 = b_k(g_t(3))\cdot \log |V(G)| + 1$. Taking $c_{k,t} = b_k(g_t(3))$ proves the claim.
\end{proof}

\Cref{prop:OK-free} implies that for any two positive integers $k$ and $t$, the \textsc{Maximum Weight Independent Set} problem can be solved in quasipolynomial time when restricted to $K_{t,t}$-free $\mathcal{O}_k$-free graphs.
Moreover, following similar arguments as in~\cite{MR4955546,chudnovsky2025treeindependencenumberivsoda}, the same is true for many other natural algorithmic problems that are \textsf{NP}-hard in general.
 
\section{Discussion and open problems}\label{sec:conclusion}

Recall that Chudnovsky and Trotignon (see~\cite{CT24}) constructed graph classes with clique-bounded treewidth and unbounded $\alpha$-treewidth, thus disproving a conjecture of Dallard, Milani\v{c}, and \v{S}torgel~\cite{dallard2022secondpaper} that treewidth is awesome.
However, their construction says nothing about the following two weaker statements.

\begin{conjecture}\label{conjecture-pw}
Every $(\pw,\omega)$-bounded graph class has bounded $\alpha$-treewidth.
\end{conjecture}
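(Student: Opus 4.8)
The plan is to route through a balanced-separator description of $\alpha$-treewidth. Recall from the work of Dallard, Milani\v{c}, and \v{S}torgel on tree-independence number that there is a function $\beta\colon\mathbb N\to\mathbb N$ such that a graph $G$ has $\alpha$-treewidth at most $\beta(k)$ whenever every induced subgraph $H$ of $G$ and every weight function $w\colon V(H)\to\mathbb R_{\ge 0}$ admit a \emph{balanced separator of independence number at most $k$}: a set $S\subseteq V(H)$ with $\alpha(H[S])\le k$ such that each component of $H-S$ carries at most half of $w(V(H))$. Since $\mathcal G$ is hereditary, it thus suffices to find a constant $k=k(\mathcal G)$ such that every $G\in\mathcal G$, with every weight function on $V(G)$, admits such a separator. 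Two observations frame the task. First, $\mathcal G$ is $K_{n,n}$-free for some $n=n(\mathcal G)$: the pathwidth of $K_{m,m}$ tends to infinity while $\omega(K_{m,m})=2$, so a binding function $f$ for $\pw$ in $\mathcal G$ forces $m\le f(2)$. Second, every $G\in\mathcal G$ with $\omega(G)\le r$ satisfies $\pw(G)\le f(r)$, hence has bounded treewidth and bounded $\alpha$-treewidth; so the whole difficulty is in the regime of large $\omega(G)$. Given $G\in\mathcal G$ with $t:=\omega(G)$ and a path decomposition $(P,\beta)$ of width at most $f(t)$, the usual middle-bag argument yields a bag $B=\beta(x^\ast)$ that is a balanced separator with $|B|\le f(t)$; the obstruction is that $\alpha(G[B])$ may be as large as $\sim f(t)$, which is not a constant.

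The heart of the proof would be to show that a balanced-separator bag containing a large independent set cannot occur in a bounded-width path decomposition of a graph from $\mathcal G$. Writing $L,R$ for the two sides of $G-B$ and $I\subseteq B$ for a large independent set, each $v\in I$ occupies an interval of bags of $P$ straddling $x^\ast$, and all neighbours of $v$ lie in the union of the bags over that interval; the aim is to use the interaction of $I$ with $L$ and with $R$, under the bounded width, to extract either a large induced $K_{n,n}$ (contradicting the first observation) or a triangle-free induced subgraph of pathwidth larger than $f(2)$ (contradicting the second observation, since such a subgraph lies in $\mathcal G$ and has clique number $2$). It may help to reduce first to graphs with no clique cutset, using that $\alpha$-treewidth is controlled by the atoms of the clique-cutset decomposition. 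Making this extraction rigorous, i.e.\ understanding precisely how an independent set spread across a bounded-width path decomposition is constrained by the two sides of a separator it belongs to, is the step I expect to be the main obstacle; it may well require a genuine structure theorem for $(\pw,\omega)$-bounded classes, in the spirit of the results of Hermelin, Mestre, and Rawitz and of Hajebi.

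Two tempting shortcuts should be ruled out in advance, since neither reaches a \emph{constant} bound. The modulator machinery of \Cref{sec:awesome} does not apply: a $(\pw,\omega)$-bounded class need not be $(\twcm,\omega)$-bounded for any fixed $c$ --- for instance, for each $c$, the class of all graphs obtained from a path by blowing up every vertex into a clique $K_{c+1}$ and making the blobs of consecutive vertices complete to one another has bounded pathwidth but unbounded $(\tw,c)$-modulator number --- so \Cref{lem:tau-k-omega-alpha-general} can at best yield a non-constant (polynomial, or logarithmic in the sparse case) bound on $\alpha$-treewidth. And one cannot simply take an interval supergraph $H\supseteq G$ with $\omega(H)\le f(\omega(G))$ and reuse a clique tree of $H$ as a tree decomposition of $G$: the cliques of $H$ need not have bounded independence number in $G$, and no choice of $H$ helps, because a \emph{path} decomposition of $G$ with bounded-independence bags cannot exist in general --- pathwidth is awful (\Cref{tw-pw-awful}). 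Any successful argument must therefore genuinely exploit the branching afforded by tree decompositions.
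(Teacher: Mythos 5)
You are attempting to prove \Cref{conjecture-pw}, which the paper explicitly leaves open: it offers no proof, only partial evidence (the $(\vc,\omega)$-bounded case via \Cref{nK2-Knn} and \Cref{cor:chain-tw-vc}, and the $\{P_n,K_{1,n}\}$-free case). So there is no proof in the paper to compare yours against, and your text is not a proof either --- it is a proof \emph{plan} whose decisive step is missing, as you yourself say. Concretely: after reducing to the existence of balanced separators of bounded independence number (a legitimate reduction, given the known separator characterisations of tree-independence number) and locating a balanced-separator bag $B$ of size at most $f(\omega(G))$ in a minimum-width path decomposition, everything hinges on showing that $\alpha(G[B])$ large forces either an induced $K_{n,n}$ or a triangle-free induced subgraph of pathwidth exceeding $f(2)$. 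No argument is given for this implication, and it is exactly where the difficulty of the conjecture lives; note in particular that $B$ is a separator of the \emph{whole} graph $G$, whose clique number may be huge, so the second horn of your dichotomy requires extracting a sparse substructure from a dense ambient graph, which is precisely the kind of statement that the Chudnovsky--Trotignon counterexample for treewidth shows cannot be taken for granted.

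That said, the framing observations you make are correct and worth recording: $(\pw,\omega)$-bounded classes are $K_{n,n}$-free for some $n$; the modulator machinery of \Cref{sec:awesome} cannot give a constant bound here, since a class of bounded pathwidth (your clique-blown-up paths) can have unbounded $(\tw,c)$-modulator number for every fixed $c$; and any successful argument must output genuine tree decompositions rather than path decompositions, because the class $\Gamma$ of \Cref{sec:bddtd-not-alpha-bdd-pw} is $(\pw,\omega)$-bounded with unbounded $\alpha$-pathwidth (\Cref{tw-pw-awful}). These correctly delimit the space of possible proofs, but they do not close the gap: as written, the proposal establishes nothing beyond what the paper already states as motivation for posing \Cref{conjecture-pw} as an open problem.
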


\begin{conjecture}\label{conjecture-td}
Every $(\td,\omega)$-bounded graph class has bounded $\alpha$-treewidth.
\end{conjecture}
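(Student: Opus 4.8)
The plan is to first replace the hypothesis by an equivalent but more workable one. Using depth-first search one sees that a connected graph whose longest path has $\ell$ vertices has treedepth at most $\ell$, while conversely $\td(G)\ge \log_2(\ell+1)$ whenever $P_\ell$ is a subgraph of $G$ (treedepth is subgraph-monotone); hence a hereditary class $\mathcal{G}$ is $(\td,\omega)$-bounded if and only if the number of vertices on a longest path of a graph $G\in\mathcal{G}$ is bounded by a function of $\omega(G)$. Observe that for every $G\in\mathcal{G}$ one then has $\tin(G)\le \tw(G)\le \td(G)\le f(\omega(G))$ (by \Cref{prop:eta} and the trivial bound $\alpha(G[X])\le|X|$), so such a class automatically has \emph{clique-bounded} $\alpha$-treewidth; the whole content of \Cref{conjecture-td} is to upgrade this clique-bounded estimate to a bound by an absolute constant.

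It is worth recording that the modulator toolkit of \Cref{sec:applications} does not suffice by itself. For a fixed $c$, let $H$ be a $(c{+}1)\times(c{+}1)$ grid and let $\mathcal{G}$ be the hereditary closure of $\{nH : n\in\mathbb{N}\}$; then $\mathcal{G}$ has clique number $2$ and bounded treedepth, so it is $(\td,\omega)$-bounded, yet $\mu_{\tw,c}(nH)=n\cdot\mu_{\tw,c}(H)$ is unbounded, so $\mathcal{G}$ is not $(\twcm,\omega)$-bounded and \Cref{thm:tw-modulator-omega-bdd} does not apply -- even though $\mathcal{G}$ does have bounded $\alpha$-treewidth, since the $\alpha$-treewidth of a disjoint union equals the maximum of the $\alpha$-treewidths of its components. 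Consequently the sought tree decompositions cannot be obtained by deleting a bounded-independence modulator and decomposing the remainder: the bounded-independence property has to be distributed throughout the decomposition rather than confined to one part.

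The route I would pursue is induction on the clique number, combined with a min-max duality for tree-independence number in the spirit of the bramble--treewidth duality: large $\tin(G)$ should be certified by a suitably ``highly linked'' independence obstruction carried by $G$. The base case $\omega\le 2$ is exactly the assertion that a $(\td,\omega)$-bounded class of triangle-free graphs has bounded treedepth, hence bounded treewidth, hence bounded $\alpha$-treewidth. For the inductive step, assume $G\in\mathcal{G}$ with $\omega(G)=p\ge 3$ carries such an obstruction of large order, and let $K$ be a maximum clique of $G$. Because $G$ contains no long path, one would argue by a Ramsey-type argument that $K$ cannot be ``threaded through'' the obstruction: either the obstruction already lives inside an induced subgraph of $G$ disjoint from $K$, or $K$ can be detached from the rest of the obstruction by a small separator; in either case one extracts an induced subgraph $G'\in\mathcal{G}$ with $\omega(G')<p$ that still carries a large-order obstruction, hence has large $\tin$, contradicting the inductive hypothesis applied within the hereditary class $\mathcal{G}$.

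The main obstacle is precisely this detachment step: understanding how a large clique can sit inside an obstruction to bounded tree-independence number in a graph with no long path, and ruling out configurations in which every maximum clique genuinely participates in the obstruction. This is where the difficulty concentrates, and is presumably why the statement is only conjectured; a clean resolution may ultimately require a ``grid theorem'' for tree-independence number -- an explicit description of the obstructions to bounded $\tin$, which would let one check directly that each of them forces either a long path or an unbounded clique number -- or else a route that avoids induction on $\omega$ entirely, for instance through the induced-matching-treewidth or mim-width viewpoint.
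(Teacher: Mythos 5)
You should be aware that \Cref{conjecture-td} is stated in the paper as an open conjecture: the paper contains no proof of it, only partial support (the $(\vc,\omega)$-bounded case via \Cref{nK2-Knn} and \Cref{cor:chain-tw-vc}, the $\{P_n,K_{1,n}\}$-free case, and the equivalence with \Cref{conjecture-Pn_Knn} established in \Cref{prop:conj-equivalence}). Your proposal, by your own admission, is likewise not a proof but a plan, so there is nothing to certify here. That said, your preliminary observations are sound and align with what the paper actually does: the reformulation of $(\td,\omega)$-boundedness as ``longest path bounded by a function of $\omega$'' is correct (via DFS trees in one direction and $\td(P_\ell)=\lceil\log_2(\ell+1)\rceil$ in the other) and is essentially the content of \Cref{prop:conj-equivalence}, which recasts the hypothesis as excluding some $P_n$ and some $K_{n,n}$; the observation that such classes automatically have clique-bounded $\alpha$-treewidth is correct but, as you note, far from the conjectured constant bound; and your grid example correctly shows that, for each fixed $c$, $(\td,\omega)$-boundedness does not imply $(\twcm,\omega)$-boundedness, so \Cref{thm:tw-modulator-omega-bdd} cannot close the gap on its own.

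The genuine gap is the entire inductive step, and it is worth naming why it is not merely a technicality. First, the ``min-max duality for tree-independence number'' on which your induction rests does not exist in any form usable here: there is no known analogue of the bramble--treewidth duality, let alone a grid-theorem-style description of the obstructions to bounded $\tin$, and obtaining one is itself a major open problem. Second, even granting such a duality as a black box, the ``detachment step'' is not formulated precisely enough to be checked: you would need to show that in a graph with no long induced path, a maximum clique $K$ either misses a large sub-obstruction entirely or can be separated from it, and then that the surviving sub-obstruction lives in an induced subgraph of strictly smaller clique number \emph{and still certifies large} $\tin$ \emph{of that subgraph} (obstructions need not restrict well to induced subgraphs). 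Nothing in the proposal rules out the configuration you yourself flag, where every maximum clique is interleaved with the obstruction. Since the conclusion you are after is exactly the open conjecture, and the one concrete special case your base case handles ($\omega\le 2$, i.e.\ bounded treedepth) is already covered by \Cref{cor:chain-tw-vc}, the proposal should be read as a (reasonable) research programme rather than a proof attempt with a fixable flaw.
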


As discussed in~\cite{DKKMMSW2024}, it is an open question whether every hereditary graph class defined by \textsl{finitely} many forbidden induced subgraphs that is $(\tw,\omega)$-bounded has bounded $\alpha$-treewidth.
A particular open case is the following conjecture by Dallard et al.~\cite{DKKMMSW2024}.

\begin{conjecture}\label{conjecture-Pn_Knn}
For every positive integer $n$, the class of $\{P_n,K_{n,n}\}$-free graphs has bounded $\alpha$-treewidth.
\end{conjecture}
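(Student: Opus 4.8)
The plan is to bound the tree-independence number directly, via balanced separators of bounded independence number. It suffices to produce an integer $k = k(n)$ such that every $\{P_n, K_{n,n}\}$-free graph $G$ and every vertex subset $W \subseteq V(G)$ admit a set $S \subseteq V(G)$ with $\alpha(G[S]) \le k$ such that every connected component of $G - S$ contains at most $|W|/2$ vertices of $W$; the standard recursive construction of a tree decomposition out of such separators (see~\cite{dallard2022firstpaper,dallard2022secondpaper}) then bounds $\tin$ on the class. I emphasise that the cardinality analogue — that the class has $\omega$-bounded $(\tw, c)$-modulator number for some constant $c$, which by \Cref{thm:tw-modulator-omega-bdd} would already yield bounded $\alpha$-treewidth — fails already for $n = 4$: disjoint unions of arbitrarily large complete graphs are $\{P_4, K_{4,4}\}$-free, yet their $(\tw, c)$-modulator number grows with the number of components while the clique number stays fixed. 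So it is genuinely the \emph{independence number} of the separator that must be controlled.

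First I would reduce to the case that $G$ is connected: separators can be assembled component by component, keeping only the component that contains more than half of $W$ and discarding the rest. A connected $P_n$-free graph has diameter at most $n - 2$, since a shortest path between a farthest pair of vertices is induced and hence has fewer than $n$ vertices. So a breadth-first search from any fixed root partitions $V(G)$ into layers $L_0, L_1, \dots, L_d$ with $d \le n - 2$. Taking $i^\ast$ to be the least index with $|W \cap (L_0 \cup \dots \cup L_{i^\ast})| \ge |W|/2$, the single layer $L_{i^\ast}$ is already a balanced separator of $G$ with respect to $W$: each component of $G - L_{i^\ast}$ lies entirely in $L_0 \cup \dots \cup L_{i^\ast - 1}$ or entirely in $L_{i^\ast + 1} \cup \dots \cup L_d$, and each of these two sets meets $W$ in at most $|W|/2$ vertices.

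The obstacle — and the heart of the problem — is that a breadth-first-search layer can have arbitrarily large independence number even within the class: a vertex dominating a large independent set already yields $K_{1,m}$, which is $\{P_4, K_{4,4}\}$-free. So $L_{i^\ast}$ must be replaced by a subset $S'$ with $\alpha(G[S']) \le k$ that still cuts off the $W$-heavy pieces. I expect this to require an induction on $n$ together with substantial structural analysis: after peeling off the first and last few layers around $L_{i^\ast}$, the remaining portions of $G$, examined relative to the shortest paths that realise the layering, should carry $\{P_{n'}, K_{n,n}\}$-free structure for some $n' < n$, with $K_{n,n}$-freeness bounding how many induced substructures can cross the balancing layer, so that smaller separators become available from the inductive hypothesis. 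An alternative would be to import the extended-strip-decomposition and three-in-a-tree machinery that Chudnovsky and coauthors have developed to bound tree-independence number in related classes, and combine it with the biclique-free hypothesis. Turning either route into a proof is precisely the missing ingredient, which is why the statement remains a conjecture.

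Two observations support the plausibility of the approach. First, the class is $(\tw, \omega)$-bounded: a $\{P_n, K_{n,n}\}$-free graph of bounded clique number has bounded treewidth, since by a Ramsey-type theorem of Atminas, Lozin and Razgon a graph containing a sufficiently long path contains a large clique, a large complete bipartite subgraph, or a long induced path, so such a graph has bounded-length paths and hence bounded treedepth; consequently the separation between $(\tw, \omega)$-boundedness and bounded $\alpha$-treewidth exhibited by Chudnovsky and Trotignon~\cite{CT24} does not rule the conjecture out. Second, the statement is already known for $\{nK_2, K_{n,n}\}$-free graphs (\Cref{nK2-Knn} together with the fact that bounded $\alpha$-vertex cover number implies bounded $\alpha$-treewidth) and for small $n$ — for instance $n \le 3$, where the class consists of disjoint unions of cliques and has tree-independence number $1$ — which serve both as sanity checks and as base cases for the induction.
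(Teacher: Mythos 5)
The statement you were given is \Cref{conjecture-Pn_Knn}, which the paper states as an open conjecture (attributed to Dallard et al.~\cite{DKKMMSW2024}) and does not prove; so there is no proof in the paper to compare against, and you were right not to manufacture one. Your proposal is an honest plan with an explicitly acknowledged missing step, and the parts you do assert check out: the reduction to balanced separators of bounded independence number is the standard route to bounding tree-independence number; a connected $P_n$-free graph does have diameter at most $n-2$, so some BFS layer is a balanced separator; disjoint unions of large cliques (for a fixed clique size exceeding $c$) are indeed $\{P_4,K_{4,4}\}$-free with bounded clique number but unbounded $(\tw,c)$-modulator number, so \Cref{thm:tw-modulator-omega-bdd} cannot be invoked directly; the class is $(\td,\omega)$-bounded and hence $(\tw,\omega)$-bounded by \Cref{thm:paths-2} (this is exactly the content of \Cref{prop:conj-equivalence}); and the known special cases you cite ($\{nK_2,K_{n,n}\}$-free via \Cref{nK2-Knn} and \Cref{cor:chain-tw-vc}, and $n\le 3$) are correct.

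The genuine gap is the one you name yourself: replacing the BFS layer $L_{i^*}$ by a subset of bounded independence number that still separates the $W$-heavy pieces. Nothing in your sketch makes progress on this, and the paper's own evidence suggests the difficulty is real rather than technical: the class $\Gamma$ of \Cref{sec:bddtd-not-alpha-bdd-pw} consists of $\{P_6,K_{2,2}\}$-free graphs with unbounded $\alpha$-pathwidth, so any argument must exploit the flexibility of tree (as opposed to path) decompositions in an essential way --- in particular, a purely layer-by-layer linear refinement of the BFS ordering is doomed, since it would tend to produce path decompositions. One further partial result in the paper that your induction could use as an additional base case or tool is that the $\{P_n,K_{1,n}\}$-free subcase is known with the stronger conclusion of bounded $\alpha$-treedepth, and that excluding all long induced cycles (rather than a long induced path) already suffices by Seymour's result quoted after \Cref{conjecture-Pn_Knn}'s discussion. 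As it stands, your text is a reasonable research programme, not a proof, and correctly labels itself as such.
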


Let us remark that the analogous questions for hereditary graph classes defined by \textsl{finitely many} forbidden induced subgraphs obtained by replacing treewidth with pathwidth or treedepth are not open.
Indeed, our construction of $(\td,\omega)$-bounded graph classes with unbounded $\alpha$-pathwidth given in \Cref{sec:bddtd-not-alpha-bdd-pw} results in $\{P_6,K_{2,2}\}$-free graphs.
This also shows that the conclusion of \Cref{conjecture-Pn_Knn} cannot be strengthened to bounded $\alpha$-pathwidth.

We now show that \Cref{conjecture-td,conjecture-Pn_Knn} are in fact equivalent.
The proof is based on the following result due to Galvin, Rival, and Sands~\cite{MR0678167} (independently rediscovered by Atminas, Lozin, and Razgon in~\cite{MR2990848}).

\begin{theorem}\label{thm:paths-1}
For any two positive integers $q$ and $r$ there exists an integer $s$ such that any graph containing a path of order $s$ contains either a path $P_q$ as an induced subgraph or a complete bipartite graph $K_{r,r}$ as a subgraph.
\end{theorem}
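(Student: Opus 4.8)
The plan is to prove \cref{thm:paths-1} by induction on $q$, the target length of an induced path; the base cases $q\le 2$ are trivial, since a path on two vertices is already an induced $P_2$. Throughout I would argue as follows: if the host graph $G$ contains $K_{r,r}$ as a subgraph we are immediately done, so we may assume it does not; and since the conclusion only concerns an induced subgraph, we may replace $G$ by the subgraph induced on the vertex set of the given path $P=v_1v_2\cdots v_N$, so that from now on every edge we see is an edge of $G$.

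First I would run Ramsey's theorem on the $2$-colouring of the pairs of $V(P)$ according to adjacency in $G$: once $N\ge R(2r,m)$, either $G$ contains a clique on $2r$ vertices, hence a copy of $K_{r,r}$ (contradiction), or there is an independent set $W=\{v_{i_1},\dots,v_{i_m}\}$ with $i_1<\dots<i_m$, whose members are automatically \emph{spread out} along $P$ in the sense that $i_{a+1}\ge i_a+2$ for every $a$, so that between consecutive members of $W$ the path $P$ has at least one internal vertex. For each $a$, a shortest $v_{i_a}$--$v_{i_{a+1}}$ path inside $G[\,\{v_{i_a}\}\cup\{v_{i_a+1},\dots,v_{i_{a+1}-1}\}\cup\{v_{i_{a+1}}\}\,]$ is chordless, hence is an induced path $\pi_a$ of $G$; if some $\pi_a$ has at least $q$ vertices we are done, so we may assume every \emph{connector} $\pi_a$ has at most $q-1$ vertices, equivalently at most $q-3$ internal vertices (it has at least three vertices, as $v_{i_a}$ and $v_{i_{a+1}}$ are nonadjacent).

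The heart of the argument is then to splice some of these short connectors into one long induced path, or else to extract $K_{r,r}$. For this I would apply a second round of Ramsey, with many colours, to the connectors themselves: colour each pair of (disjoint) connectors $\pi_a,\pi_b$ by whether there is an edge of $G$ between $V(\pi_a)$ and $V(\pi_b)$ and, if so, by the pair of positions — within $\pi_a$ and within $\pi_b$ — of a canonically chosen such edge. A large monochromatic ``an edge is always present'' family, because each connector has bounded size, yields a copy of $K_{r,r}$: if the two recorded positions coincide, the corresponding vertices of the chosen connectors form a large clique; if they differ, those vertices form a large half-graph, and a large complete bipartite graph is obtained by pairing the first half of the index set against the second half. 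A large monochromatic ``no edge is ever present'' family gives a collection of pairwise non-interfering connectors, and stringing a bounded selection of them together along $P$ should produce the desired long induced path, which contains $P_q$.

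The step I expect to be the main obstacle is precisely this last stringing-together: on passing to a sub-family of connectors, the stretches of $P$ lying between two \emph{chosen} connectors — which contain the discarded members of $W$ together with their own connectors — are not controlled, and a single stray edge from such a stretch destroys inducedness. Overcoming this seems to require iterating the whole extraction: each time one fails to conclude, the chord structure in a neighbourhood of the surviving ``milestone'' vertices strictly simplifies (for instance the largest number of connectors that a vertex can interfere with, or the number of attachment patterns, drops), so after a number of rounds bounded in terms of $q$ and $r$ alone, either $K_{r,r}$ appears or the connectors become tame enough that the naive concatenation is genuinely induced — at which point the induction hypothesis (applied to a shorter target length, if needed) closes the argument. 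Alternatively, one may of course simply invoke the proofs of Galvin, Rival, and Sands~\cite{MR0678167} or Atminas, Lozin, and Razgon~\cite{MR2990848}.
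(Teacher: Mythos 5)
The paper does not actually prove \cref{thm:paths-1}; it imports it from Galvin, Rival, and Sands~\cite{MR0678167} and Atminas, Lozin, and Razgon~\cite{MR2990848}, so your closing sentence (``simply invoke the proofs of\dots'') describes exactly what the paper does. Judged as a self-contained argument, however, your proposal has a genuine gap, and it is precisely the one you flag yourself. The preliminary steps are sound: restricting to $G[V(P)]$, the first Ramsey application (a $2r$-clique gives $K_{r,r}$, otherwise a large independent set $W$ of pairwise non-consecutive path vertices), the chordless connectors of order between $3$ and $q-1$, and the extraction of $K_{r,r}$ from a monochromatic ``edge present'' family of pairwise disjoint connectors (a clique when the two recorded positions coincide, a half-graph split into a first and a second half otherwise) are all correct.

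The problem is that the remaining outcome --- a large family of pairwise \emph{non-adjacent} connectors --- is a dead end as stated: it is an induced linear forest all of whose components have fewer than $q$ vertices, so it contains no induced $P_q$, and it cannot be concatenated into a single induced path except by routing through the discarded stretches of $P$ between consecutive chosen connectors, whose adjacencies to the chosen pieces and to each other you have not controlled. Your proposed repair --- iterate the extraction and argue that some complexity measure (``the largest number of connectors a vertex can interfere with'', ``the number of attachment patterns'') strictly decreases --- is a hope, not a proof: the measure is not defined, its decrease is not established, the number of rounds is not bounded, and it is not shown that the terminal configuration yields an induced $P_q$. Since everything up to this point is routine Ramsey bookkeeping, this splicing step is the entire content of the theorem, and the argument is therefore incomplete. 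The known proofs are organised differently precisely to avoid having to rejoin independent pieces --- for instance, by inducting on $q$ and analysing where along the path the neighbours of a suitably chosen vertex can lie --- rather than by a global double application of Ramsey to a family of connectors.
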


Combined with Ramsey's theorem, \cref{thm:paths-1} implies the following (see, e.g.,~\cite{MR2990848}).

\begin{theorem}\label{thm:paths-2}
For any three positive integers $p$, $q$, and $r$ there exists an integer $s$ such that any graph containing a path of order $s$ contains either $K_p$ or $K_{q,q}$ or $P_r$ as an induced subgraph.
\end{theorem}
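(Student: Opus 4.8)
The plan is to obtain Theorem~\ref{thm:paths-2} from Theorem~\ref{thm:paths-1} by applying Ramsey's theorem to ``clean up'' the (not necessarily induced) complete bipartite subgraph that Theorem~\ref{thm:paths-1} produces. First I would fix the target parameters $p$, $q$, $r$ and set $m := R(p,q)$, so that every graph on at least $m$ vertices contains a clique of size $p$ or an independent set of size $q$. I would then invoke Theorem~\ref{thm:paths-1} with its own parameters instantiated as ``$q$''~$:=r$ and ``$r$''~$:=m$, obtaining an integer $s$ such that every graph containing a path of order $s$ contains either an induced $P_r$ or a subgraph isomorphic to $K_{m,m}$. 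In the first case we are done, so only the second case needs further work.

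The key step is to show that a $K_{m,m}$ subgraph of a graph $G$ forces an induced $K_p$ or an induced $K_{q,q}$ in $G$. Let $A$ and $B$ be the two sides of such a subgraph, so $|A|=|B|=m$ and every vertex of $A$ is adjacent to every vertex of $B$. Since $|A|=m=R(p,q)$, the induced subgraph $G[A]$ contains either a clique of size $p$---yielding an induced $K_p$ in $G$, and we are done---or an independent set $A'\subseteq A$ with $|A'|=q$. Arguing in the same way inside $B$, we either again find an induced $K_p$ or obtain an independent set $B'\subseteq B$ with $|B'|=q$. In the remaining case, both $A'$ and $B'$ are independent in $G$ while all $q^2$ edges between $A'$ and $B'$ are present, so $G[A'\cup B']$ is isomorphic to $K_{q,q}$ and is an induced subgraph. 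Combining this implication with the dichotomy from Theorem~\ref{thm:paths-1} proves the statement, with the final $s$ being exactly the value returned by Theorem~\ref{thm:paths-1} for parameters $r$ and $R(p,q)$.

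There is essentially no genuine obstacle here; the only points demanding care are purely bookkeeping. One must track which of the three symbols $p$, $q$, $r$ plays which role when feeding parameters into Theorem~\ref{thm:paths-1}, whose statement uses ``$q$'' for the induced path and ``$r$'' for the (non-induced) bipartite subgraph, and one must ensure that the two applications of Ramsey's theorem---one on each side of the complete bipartite subgraph---are combined so that $A'$ and $B'$ together with the cross edges induce \emph{exactly} $K_{q,q}$ and not a denser graph, which is immediate since $A'$ and $B'$ lie on opposite sides of the bipartition and hence span no edges other than the complete bipartite ones.
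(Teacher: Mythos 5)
Your proof is correct and follows exactly the route the paper intends: the paper gives no explicit proof, stating only that \cref{thm:paths-2} follows from \cref{thm:paths-1} combined with Ramsey's theorem (citing~\cite{MR2990848}), and your argument---applying \cref{thm:paths-1} with the bipartite parameter set to $R(p,q)$ and then using Ramsey's theorem on each side of the resulting $K_{m,m}$ to extract an induced $K_p$ or an induced $K_{q,q}$---is precisely that standard derivation, with the bookkeeping handled correctly.
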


\begin{proposition}\label{prop:conj-equivalence}
\Cref{conjecture-td,conjecture-Pn_Knn} are equivalent.   
\end{proposition}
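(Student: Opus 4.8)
The plan is to establish the two implications separately, both resting on a single elementary fact relating treedepth to the length of a longest path. On one side, a depth-first-search forest of $G$ certifies that $\td(G)$ is at most the number of vertices on a longest path of $G$: every non-tree edge joins an ancestor to a descendant, so $G$ is contained in the transitive closure of the DFS forest, and the depth of that forest is at most the number of vertices on a longest path of $G$ (a root-to-leaf branch is itself a path of $G$). On the other side, $\td(P_m)\ge \log_2(m+1)$, and since treedepth is monotone under subgraphs, also $\td(K_{m,m})\ge \td(P_{2m})\ge \log_2(2m+1)$. These two bounds say precisely that ``$(\td,\omega)$-bounded'' and ``$\{P_n,K_{n,n}\}$-free for some fixed $n$'' are, up to the value of the bound, the same condition, and the equivalence of \Cref{conjecture-td,conjecture-Pn_Knn} will follow.

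First I would show that \Cref{conjecture-td} implies \Cref{conjecture-Pn_Knn}. Fix a positive integer $n$ and let $\mathcal{G}$ be the (hereditary) class of $\{P_n,K_{n,n}\}$-free graphs; it suffices to check that $\mathcal{G}$ is $(\td,\omega)$-bounded, since \Cref{conjecture-td} then yields that $\mathcal{G}$ has bounded $\alpha$-treewidth. So let $G\in \mathcal{G}$ with $\omega(G)=k$, and apply \Cref{thm:paths-2} with $p=k+1$ and $q=r=n$: there is an integer $s=s(k+1,n,n)$ such that any graph containing a path of order $s$ contains $K_{k+1}$, $K_{n,n}$, or $P_n$ as an induced subgraph. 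Since $G$ has clique number $k$ and is $\{P_n,K_{n,n}\}$-free, $G$ contains no path of order $s$, whence $\td(G)\le s-1$ by the DFS bound above. As $s(k+1,n,n)$ can be taken nondecreasing in $k$ (immediate from minimality of $s$), the map $k\mapsto s(k+1,n,n)-1$ is a nondecreasing binding function, so $\mathcal{G}$ is $(\td,\omega)$-bounded.

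For the converse, assume every class of $\{P_n,K_{n,n}\}$-free graphs (with $n$ fixed) has bounded $\alpha$-treewidth, and let $\mathcal{G}$ be an arbitrary $(\td,\omega)$-bounded class with nondecreasing binding function $f$. If $\mathcal{G}$ is edgeless it has bounded $\alpha$-treewidth trivially, so assume it contains an edge; then $f(2)\ge \td(K_2)=2$. Put $n=2^{f(2)}$. We have $\omega(P_n)=\omega(K_{n,n})=2$, while $\td(P_n)\ge \log_2(n+1)>f(2)$ and $\td(K_{n,n})\ge \log_2(2n+1)>f(2)$ by the lower bounds recalled above. Hence neither $P_n$ nor $K_{n,n}$ can occur as an induced subgraph of any graph in $\mathcal{G}$, since such an occurrence would force (by $(\td,\omega)$-boundedness, a condition on all induced subgraphs) a treedepth of at most $f(2)$. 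Therefore $\mathcal{G}$ is a subclass of the class of $\{P_n,K_{n,n}\}$-free graphs, which by assumption has bounded $\alpha$-treewidth; since boundedness of $\alpha$-treewidth passes to subclasses, $\mathcal{G}$ has bounded $\alpha$-treewidth as well.

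I do not anticipate a genuine obstacle: the only nontrivial inputs are the classical two-way relationship between treedepth and longest paths (the DFS argument in one direction and the logarithmic lower bound for paths and bicliques in the other) together with \Cref{thm:paths-2}, which is already available. The one point requiring a little care is to keep in mind that $(\td,\omega)$-boundedness is imposed on \emph{all induced subgraphs}, which is exactly what allows the conclusion in the second implication that $P_n$ and $K_{n,n}$ are themselves excluded from $\mathcal{G}$.
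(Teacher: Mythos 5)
Your proof is correct and takes essentially the same route as the paper's: both directions rest on the two-way relationship between treedepth and longest paths (the upper bound via the order of a longest path, the logarithmic lower bound for $P_n$ and $K_{n,n}$) together with \Cref{thm:paths-2}, which is exactly how the paper proceeds when it reduces the claim to showing that a hereditary class is $(\td,\omega)$-bounded if and only if it excludes some path and some complete bipartite graph. The only cosmetic difference is that you spell out the DFS argument for $\td(G)\le{}$(order of a longest path) where the paper cites a reference.
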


\begin{proof}
It suffices to show that a hereditary graph class is $(\td,\omega)$-bounded if and only if it excludes some path and some complete bipartite graph.

Assume first that $\mathcal{G}$ is a hereditary $(\td,\omega)$-bounded graph class.
Let $f:\mathbb{N}\to \mathbb{N}$ be a nondecreasing function such that $\td(G)\le f(\omega(G))$ for all $G\in \mathcal{G}$.
Let $P_n$ be an $n$-vertex path that belongs to $\mathcal{G}$.
Then $\td(P_n)\le f(2)$ since $\omega(P_n)\le 2$ and $f$ is nondecreasing.
However, $\td(P_n) = \lceil\log_2(n+1)\rceil$ (see, e.g.,~\cite{MR2920058}) and therefore $n\le 2^{f(2)}-1$.
Hence $\mathcal{G}$ excludes the path of order $2^{f(2)}$.
The argument for complete bipartite graphs $K_{n,n}$ is similar, using the fact that they have bounded clique number but unbounded treedepth (note that $\td(K_{n,n})\ge \td(P_{2n})$, since $P_{2n}$ is a subgraph of $K_{n,n}$).

Assume now that $\mathcal{G}$ is a hereditary graph class that excludes some path and some complete bipartite graph. 
Let $a$, $b$, $c$ be nonnegative integers such that $P_a$ and $K_{b,c}$ do not belong to $\mathcal{G}$, and let $n = \max\{a,b,c\}$.
Since $\mathcal{G}$ is hereditary, the path $P_n$ and the complete bipartite graph $K_{n,n}$ do not belong to $\mathcal{G}$.
By \cref{thm:paths-2}, for any three integers $p$, $q$, and $r$, there exists a smallest integer $s(p,q,r)$ such that every graph containing a path of order $s(p,q,r)$ contains either $K_p$ or $K_{q,q}$ or $P_r$ as an induced subgraph.
To argue that $\mathcal{G}$ is $(\td,\omega)$-bounded, we show that $\td(G)\le s(\omega(G)+1,n,n)$ for all $G\in \mathcal{G}$.
Indeed, since $G$ contains neither $P_n$, nor $K_{n,n}$, nor the complete graph on $\omega(G)+1$ vertices as an induced subgraph, it does not contain a path of order $s(p+1,n,n)$.
But then, the treedepth of $G$ is at most $s(\omega(G)+1,n,n)$ (see, e.g.,~\cite{MR2920058}).
\end{proof}

Next, we prove several results providing partial support for \cref{conjecture-pw,,conjecture-td,conjecture-Pn_Knn}.
First, we generalize the well-known fact that every graph $G$ satisfies $\td(G) \leq \vc(G)+1$, by proving a similar relation between their $\lambda$-variants, for any weakly submodular and tame annotated graph parameter $\lambda$ (see p.~\pageref{def:monotone-wekly-submod-tame} for the definitions).

\begin{proposition}\label{prop:lambda-td-vc}
    For every graph $G$ and every weakly submodular and tame annotated graph parameter $\lambda$, it holds
    \[
    \lambda\text{-}\td(G) \leq  \lambda\text{-}\vc(G)+1.
    \]
\end{proposition}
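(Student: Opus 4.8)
The plan is to adapt the classical argument showing $\td(G)\le\vc(G)+1$ and to check that it survives the passage to $\lambda$-variants using only weak submodularity and tameness of $\lambda$---in particular, without monotonicity, which is exactly the hypothesis that \cref{inheritable-lambda-rho at most} needed and that we now want to avoid.

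First I would fix a vertex cover $S$ of $G$ that minimises $\lambda(G,\cdot)$ over all vertex covers of $G$; by the canonical hyperparameterisation of the vertex cover number (see \Cref{sec:hyperparameterisations}), such a set exists and satisfies $\lambda(G,S)=\lambda\text{-}\vc(G)$. Since $S$ is a vertex cover, $G-S$ is edgeless. I would then construct one specific treedepth decomposition $F$ of $G$: listing the elements of $S$ in an arbitrary order $s_1,s_2,\dots,s_k$ with $k=|S|$, if $k\ge 1$ form the rooted path $s_1 s_2\cdots s_k$ rooted at $s_1$ and attach every vertex of $V(G)\setminus S$ as a child of $s_k$, while if $k=0$ (so that $S=\emptyset$ is a vertex cover and hence $G$ has no edges) let $F$ be the edgeless rooted forest on $V(G)$. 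This $F$ is a treedepth decomposition of $G$: every edge of $G$ has both its endpoints in $S$, hence both of them lie on the path $s_1\cdots s_k$ and are comparable in the ancestor order of $F$, so the edge lies in the transitive closure of $F$.

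The crucial observation concerns the shape of the root-to-leaf paths of $F$: each of them has vertex set either $S$ itself (the path $s_1\cdots s_k$, which occurs only when $V(G)\setminus S=\emptyset$) or $S\cup\{v\}$ for some $v\in V(G)\setminus S$ (and, when $k=0$, simply a singleton $\{v\}=S\cup\{v\}$). Thus every root-to-leaf path $P$ has $V(P)=S\cup Z$ for some $Z$ with $|Z|\le 1$ and, importantly, $S\subseteq V(P)$; it is precisely because all of $S$ sits on every root-to-leaf path that monotonicity of $\lambda$ is not needed. For such a path $P$: if $Z=\emptyset$ then $\lambda(G,V(P))=\lambda(G,S)=\lambda\text{-}\vc(G)$, and if $Z$ is a singleton then weak submodularity followed by tameness gives
\[
\lambda(G,V(P))=\lambda(G,S\cup Z)\le\lambda(G,S)+\lambda(G,Z)\le\lambda(G,S)+1=\lambda\text{-}\vc(G)+1.
\]
Since the treedepth hyperparameter is of $\minmax$ type and $F$ is just one treedepth decomposition of $G$, taking the maximum over its root-to-leaf paths gives $\lambda\text{-}\td(G)\le\lambda\text{-}\vc(G)+1$.

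I do not foresee a genuine obstacle here. The points that need a little care are: choosing $S$ to be $\lambda$-minimal rather than of minimum cardinality; arranging $F$ so that every root-to-leaf path contains all of $S$, which is what lets us dispense with monotonicity of $\lambda$ (and in this particular case strengthens \cref{inheritable-lambda-rho at most}); and the degenerate cases $S=\emptyset$ and $V(G)=\emptyset$, which are immediate (for the null graph the inequality holds trivially).
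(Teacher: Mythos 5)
Your construction and argument coincide with the paper's own proof: both fix a vertex cover $S$ with $\lambda(G,S)=\lambda\text{-}\vc(G)$, build the treedepth decomposition consisting of a rooted path on $S$ with the remaining vertices attached below its last vertex, and bound each root-to-leaf set $S\cup\{v\}$ by $\lambda(G,S)+\lambda(G,\{v\})\le\lambda\text{-}\vc(G)+1$ using weak submodularity and tameness. One sentence of your justification is incorrect, however: a vertex cover only guarantees that every edge of $G$ has \emph{at least one} endpoint in $S$, not both. The decomposition is still valid, for the reason the paper gives: $V(G)\setminus S$ is an independent set, so an edge with exactly one endpoint in $S$ joins some $s_i$ to a child of $s_k$, and these are again comparable in $F$; you should replace your claim with this argument.
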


\begin{proof}
Let $G$ be a graph.
By the definition of $\lambda\text{-}\vc$, we have \[\lambda\text{-}\vc(G) = \min_{X\in H_{\vc}(G)}\lambda(G,X)\,,\] where $H_{\vc}(G)$ is the hypergraph over $G$ whose hyperedges are vertex covers of $G$.
Similarly, 
\[\lambda\text{-}\td(G) = \lambda\text{-}\minmax(G,\mathcal{F}_G^{\td})= \min_{H\in \mathcal{F}_G^{\td}}\max_{X\in H}\lambda(G,X)\,,\] where $\mathcal{F}_G^{\td}$ is the family of all hypergraphs over $G$ whose hyperedges are vertex sets of root-to-leaf paths of some treedepth decomposition of $G$.
Let $C = \{v_1,\ldots, v_k\}$ be a vertex cover of $G$ such that $\lambda(G,C) = \lambda\text{-}\vc(G)$.
Let $F$ be the rooted forest with vertex set $V(G)$ with directed edges $\{(v_i,v_{i+1})\colon 1\le i\le k-1\}\cup \{(v_k,v)\colon v\in V(G)\setminus C\}$. 
Since $V(G)\setminus C$ is an independent set in $G$, the graph $G$ is a subgraph of the transitive closure of $F$.
Therefore, the hypergraph $H$ over $G$ whose hyperedges are precisely the sets $C\cup \{v\}$ for all $v\in V(G)\setminus C$, belongs to $\mathcal{F}_G^{\td}$.
Furthermore, 
\[\max_{X\in H}\lambda(G,X) = \max_{v\in V(G)\setminus C} \lambda(G,C\cup \{v\}) 
\le 
\max_{v\in V(G)\setminus C} \left(\lambda(G,C)+\lambda(G,\{v\})\right) \le \lambda(G,C)+1 = \lambda\text{-}\vc(G)+1\,,\]
where the two inequalities follow from the facts that $\lambda$ is weakly submodular and tame, respectively.
Consequently, $\lambda\text{-}\td(G)\le \max_{X\in H}\lambda(G,X)\le \lambda\text{-}\vc(G)+1$, as claimed.
\end{proof}

For each $\lambda\in \{\texttt{card},\alpha\}$, since $\lambda$ is weakly submodular and tame, \Cref{prop:eta,prop:lambda-td-vc} imply the following.

\begin{corollary}\label{cor:chain-tw-vc}
Every graph $G$ satisfies 
\[\lambda\text{-}\tw(G)\le \lambda\text{-}\pw(G)\le \lambda\text{-}\td(G) \leq \lambda\text{-}\vc(G)+1\,.\]
\end{corollary}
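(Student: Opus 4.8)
The plan is to assemble the chain directly from the two previously established inequalities, \Cref{prop:eta} and \Cref{prop:lambda-td-vc}, after checking that the two relevant annotated graph parameters satisfy the hypotheses needed.

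First I would invoke \Cref{prop:eta}, which holds verbatim for an \emph{arbitrary} annotated graph parameter $\lambda$, to obtain $\lambda\text{-}\tw(G)\le \lambda\text{-}\pw(G)\le \lambda\text{-}\td(G)$; this already covers both $\lambda=\mathtt{card}$ and $\lambda=\alpha$ with no further work. For the last inequality, $\lambda\text{-}\td(G)\le \lambda\text{-}\vc(G)+1$, I would appeal to \Cref{prop:lambda-td-vc}, whose hypothesis is that $\lambda$ be weakly submodular and tame. Thus the only remaining task is to verify that $\mathtt{card}$ and $\alpha$ enjoy these two properties.

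Tameness is immediate in both cases, since $|\{v\}| = 1$ and $\alpha(G[\{v\}]) = 1$. Weak submodularity of $\mathtt{card}$ is just the union bound $|\bigcup_{i\in I}X_i| \le \sum_{i\in I}|X_i|$. For $\alpha$, given a finite family $(X_i)_{i\in I}$ of subsets of $V(G)$, any independent set $Y$ of $G\bigl[\bigcup_{i\in I}X_i\bigr]$ can be written as $\bigcup_{i\in I}(Y\cap X_i)$, and each $Y\cap X_i$ is an independent set of $G[X_i]$, so $|Y| \le \sum_{i\in I}\alpha(G[X_i]) = \sum_{i\in I}\alpha(G,X_i)$; taking the maximum over all such $Y$ gives $\alpha\bigl(G,\bigcup_{i\in I}X_i\bigr) \le \sum_{i\in I}\alpha(G,X_i)$, which is weak submodularity. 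Combining these verifications with \Cref{prop:lambda-td-vc} yields $\lambda\text{-}\td(G)\le \lambda\text{-}\vc(G)+1$ for $\lambda\in\{\mathtt{card},\alpha\}$, completing the chain.

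There is essentially no obstacle: the corollary is a formal consequence of two already-proved propositions, and the intervening checks are routine, which is exactly why the surrounding text flags them with the phrase ``since $\lambda$ is weakly submodular and tame''. The only mild subtlety is that in the weak-submodularity bound for $\alpha$ the sets $Y\cap X_i$ need not be pairwise disjoint, but since we only require an upper bound on $|Y|$ this causes no trouble.
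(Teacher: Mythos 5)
Your proposal is correct and matches the paper's argument exactly: the paper also obtains the first two inequalities from \Cref{prop:eta} (valid for arbitrary $\lambda$) and the last from \Cref{prop:lambda-td-vc}, justified by the one-line observation that both $\mathtt{card}$ and $\alpha$ are weakly submodular and tame. Your explicit verification of weak submodularity of $\alpha$ (including the remark that the sets $Y\cap X_i$ need not be disjoint) is a routine check the paper leaves implicit.
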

Consequently, \cref{nK2-Knn} implies that every $(\vc,\omega)$-bounded graph class satisfies \cref{conjecture-pw,conjecture-td}.
Indeed, such classes are $(\pw,\omega)$-bounded, $(\td,\omega)$-bounded, and have bounded $\alpha$-treewidth.
Moreover, \cref{nK2-Knn} implies that for each positive integer $n$, the class of $\{nK_2,K_{n,n}\}$-free graphs has bounded $\alpha$-treewidth, in line with \Cref{conjecture-Pn_Knn}.

\medskip

Another special case of \cref{conjecture-Pn_Knn} is the case when, instead of excluding a complete bipartite graph, we exclude a star $K_{1,n}$.
As a matter of fact, \cite[Theorem 1.5]{DKKMMSW2024} states that the class of $\{P_n, K_{1,n}\}$-free graphs has bounded $\alpha$-treewidth, and its proof can be easily adapted to show that the class has bounded $\alpha$-treedepth.
Thus, we get the following stronger result.

\begin{theorem}
For every positive integer $n$, the class of $\{P_n, K_{1,n}\}$-free graphs has bounded $\alpha$-treedepth.
\end{theorem}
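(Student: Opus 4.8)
The plan is to prove the bound by exhibiting, for each graph $G$ in the class of $\{P_n,K_{1,n}\}$-free graphs, a treedepth decomposition whose bags (root-to-leaf paths) all have bounded independence number. The starting point is \Cref{thm:paths-2}: since $G$ is $P_n$-free and (being $K_{1,n}$-free) has no induced $K_{1,n}$, hence certainly no $K_{n,n}$ subgraph in any sufficiently "spread out" part, $G$ contains no path (as a subgraph, not just induced) of order $s = s(\omega(G)+1, n, n)$ unless $G$ has a large clique. Thus every connected induced subgraph of $G$ has a spanning structure of bounded "path length" — more precisely, a DFS tree of $G$ has depth bounded by $s(\omega(G)+1,n,n)$, since a DFS tree of depth $d$ gives a path of order $d$ in the graph. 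This already bounds $\td(G)$ in terms of $\omega(G)$; the issue is that we need a decomposition controlling $\alpha$ of the bags, not their cardinality.

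First I would recall the proof strategy behind \cite[Theorem 1.5]{DKKMMSW2024} for $\{P_n,K_{1,n}\}$-free graphs having bounded $\alpha$-treewidth, and check that it actually produces a treedepth decomposition rather than merely a tree decomposition. The key point in the $K_{1,n}$-free setting is that neighbourhoods have bounded independence number: in a $K_{1,n}$-free graph, for every vertex $v$ we have $\alpha(G[N(v)]) \le n-1$, i.e.\ $\alpha$-$\Delta(G) \le n-1$. So when we build a decomposition greedily along a DFS tree, each "path bag" consists of a root-to-leaf path $v_1, v_2, \ldots, v_k$ where consecutive vertices may be non-adjacent, but — crucially — in a DFS tree each $v_i$ (for $i \geq 2$) is adjacent to $v_{i-1}$, and more strongly, the ancestors of any vertex on the path form a set in which $v_i$ sees $v_{i-1}$; one wants to argue the whole root-to-leaf path has small independence number. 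This is where $P_n$-freeness enters: an induced path inside a root-to-leaf path of a DFS tree would be an induced $P_m$ for large $m$. So I would argue that an independent set inside a root-to-leaf path, together with the tree-path edges connecting them, yields either a long induced path or forces a large clique somewhere, contradicting $P_n$-freeness or bounding things by $\omega$.

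Concretely, the main technical step is: \textbf{given a DFS forest $F$ of $G$, every root-to-leaf path $Q$ in $F$ satisfies $\alpha(G[V(Q)]) \le g(n)$ for some function $g$ independent of $\omega(G)$.} To see this, let $I \subseteq V(Q)$ be independent in $G$, ordered as they appear along $Q$, say $u_1, \ldots, u_t$. Between consecutive $u_j$ and $u_{j+1}$ the path $Q$ provides a $u_j$–$u_{j+1}$ path in $G$; contracting/shortcutting and using that $G$ is $P_n$-free, a standard argument (of the Atminas–Lozin–Razgon flavour, but applied to the already-existing path $Q$ rather than to a matching) shows that if $t$ is large then $G[V(Q)]$ contains an induced $P_n$ — unless the path $Q$ is "compressed" by many chords into a structure of bounded clique number, but since $Q$ already lives in a $P_n$-free graph, the induced subgraph on $V(Q)$ is itself $P_n$-free and has independence number bounded purely in terms of $n$ by Ramsey-type reasoning on paths combined with the absence of induced $K_{1,n}$ (which bounds $\alpha$ of neighbourhoods, controlling how much $\alpha$ can "hide" between consecutive path vertices). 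I expect this compression argument — showing that bounded clique number plus the DFS/path structure plus $K_{1,n}$-freeness forces bounded independence number along each root-to-leaf path — to be \textbf{the main obstacle}, since one must be careful that the independent set does not spread out along a long induced path; the $P_n$-free hypothesis is exactly what rules this out.

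Having established the claim, the conclusion is immediate: take $F$ to be a DFS forest of $G$ (rooting each connected component at an arbitrary vertex and running DFS); then $F$ is a treedepth decomposition of $G$, since every edge of $G$ joins an ancestor–descendant pair. Interpreting $F$ as the hypermapping for the canonical hyperparameterisation of treedepth, the bags are exactly the vertex sets of root-to-leaf paths, and by the claim each has independence number at most $g(n)$. Therefore $\alpha\text{-}\td(G) \le g(n)$, which is the desired bound, uniform over the whole class of $\{P_n, K_{1,n}\}$-free graphs. I would close by remarking that this refines \cite[Theorem 1.5]{DKKMMSW2024} via \Cref{prop:eta}, since $\alpha\text{-}\tw(G) \le \alpha\text{-}\pw(G) \le \alpha\text{-}\td(G)$, recovering bounded $\alpha$-treewidth (indeed bounded $\alpha$-pathwidth) for this class as a corollary.
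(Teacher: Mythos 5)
Your reduction to a single claim is clean, but the claim itself---that every root-to-leaf path $Q$ of a DFS forest of a $\{P_n,K_{1,n}\}$-free graph satisfies $\alpha(G[V(Q)])\le g(n)$---is false, and this is not a repairable technicality. Consider the graph $H$ with vertex set $\{a_1,\dots,a_m\}\cup\{b_1,\dots,b_m\}$ in which $\{a_1,\dots,a_m\}$ is a clique, $b_i$ is adjacent exactly to $a_i$ and $a_{i+1}$ for $i<m$, and $b_m$ is adjacent exactly to $a_m$. Any induced path in $H$ contains at most two $a$-vertices (three would form a triangle), these must be consecutive on the path, and the $b$-vertices are pairwise nonadjacent, so the longest induced path in $H$ is a $P_4$; moreover $\alpha(H[N_H(a_i)])\le 3$ for every $i$ and each $b_i$ has degree at most $2$, so $H$ contains no induced $K_{1,4}$. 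Hence $H$ is $\{P_5,K_{1,5}\}$-free. Yet $a_1,b_1,a_2,b_2,\dots,a_{m-1},b_{m-1},a_m,b_m$ is a Hamiltonian path of $H$ which a DFS started at $a_1$ can follow edge by edge, so there is a DFS tree consisting of a single root-to-leaf path whose vertex set is all of $V(H)$, while $\alpha(H)\ge m$ because $\{b_1,\dots,b_m\}$ is independent. So the ``compression'' step you yourself flagged as the main obstacle genuinely fails: a long DFS path in this class can be compressed by chords into a structure of \emph{unbounded} clique number while the independence number along the path stays unbounded and no induced $P_n$ or $K_{1,n}$ ever appears. \Cref{thm:paths-2} only tells you that a long path forces a large clique here, which bounds the treedepth in terms of $\omega$ but says nothing about the independence number of the bags.

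The moral is that the decomposition cannot be read off from an arbitrary DFS order; it must be built around the clique and domination structure of the graph. In the example above the correct treedepth decomposition roots a path through the clique $\{a_1,\dots,a_m\}$ and hangs every $b_i$ below $a_m$, giving root-to-leaf paths of independence number $2$. The paper does not write out a proof either: it states that the argument of \cite[Theorem~1.5]{DKKMMSW2024} for bounded $\alpha$-treewidth of $\{P_n,K_{1,n}\}$-free graphs can be adapted to produce a treedepth decomposition, and that argument works with dominating structures in connected $P_n$-free graphs together with the fact that $K_{1,n}$-freeness bounds the independence number of every neighbourhood, not with DFS. To salvage your write-up you would need to replace the DFS forest by such a construction (for instance, repeatedly peeling off the closed neighbourhood of a suitable connected dominating set of bounded independence number, using $P_n$-freeness to bound the recursion depth) and prove the independence bound for the root-to-leaf paths of the resulting rooted forest.
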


Note that bounded $\alpha$-treewidth can be obtained with a weaker assumption: instead of excluding a path, it suffices to exclude all sufficiently long cycles.
This follows from a result of Seymour~\cite{MR3425243} stating that for every integer $\ell\ge 4$, every graph containing no induced cycles of length at least $\ell$ has a tree decomposition such that the subgraph induced by each bag has a dominating path with at most $\ell-3$ vertices. (For further details, we refer to the paragraph following Theorem 1.5 in \cite{DKKMMSW2024}.)

\medskip
We conclude the paper with several further questions left open by this work.

A natural direction is to develop a better understanding of the landscape of awesome graph (hyper)parameters, and in particular to identify which parameters beyond those captured by \cref{modulator is awesome} are awesome.
For instance, it would be interesting to determine whether degeneracy is awesome or not, with respect to its canonical hyperparameterisation. A similarly defined max-min parameter, which we deem worthy of investigation with respect to awesomeness, is \emph{symmetric difference} of graphs.
Symmetric difference was implicitly introduced in \cite{ACLZ15}, and studied in \cite{AAL21,DLMSZ24,BDSZ24}.
It can be viewed as a dense analogue of degeneracy, and is formally defined for graphs $G$ with at least two vertices as the maximum, over all induced subgraphs $H$ of $G$ with at least two vertices, of the minimum, over all pairs of distinct vertices $x,y$ in $H$, of the number of vertices distinct from $x$ and $y$ that are adjacent to exactly one of $x$ and $y$.

In \Cref{sec:td-pw}, we introduced a graph transformation that increases $\alpha$-pathwidth by exactly one, and the same for $\alpha$-treedepth.
We are not aware of any such transformation for $\alpha$-treewidth.

\begin{question}
Is there an efficiently computable graph transformation that increases $\alpha$-treewidth by a fixed constant?
\end{question}

Recall that by \Cref{prop:main}, any graph class $\mathcal{G}$ that has bounded $\alpha\text{-}\rho$, where $\rho$ is a basic hyperparameter, has clique-bounded $\rho$ with a \textsl{polynomial} binding function.
This motivates the study of the following variant of awesomeness.
Let us say that a basic hyperparameter $\rho$ is \emph{weakly awesome} if polynomial clique-boundedness of $\rho$ is equivalent to boundedness of $\alpha\text{-}\rho$.
Of course, every awesome basic hyperparameter is also weakly awesome.
But there may be more.
 
\begin{question}
Which basic hyperparameters are weakly awesome? 
\end{question}

At the moment, we are in fact not aware of any weakly awesome basic hyperparameter that is not awesome: all the awful basic hyperparameters we are aware of (treewidth, pathwidth, treedepth, and chromatic number) turn out not to be weakly awesome, as we explain next.

In~\cite{CT24}, Chudnovsky and Trotignon constructed graph classes with polynomially clique-bounded treewidth but unbounded $\alpha$-treewidth.
This shows that treewidth is not weakly awesome.
Furthermore, our proof of \Cref{tw-pw-awful}, being based on a graph class in which the treedepth is bounded by a linear function of the clique number, shows that pathwidth and treedepth are not weakly awesome.

The argument based on \Cref{prop:main} and~\cite{MR4707561} (recall the discussion following \Cref{main-question} on p.~\pageref{discussion-awful-chi}) ruling out awesomeness of any hyperparameterisation of the chromatic number does not say anything about weak awesomeness.
Nonetheless, weak awesomeness of the canonical hyperparameterisation of the chromatic number given in \Cref{sec:hyperparameterisations} can be ruled out with the following much simpler argument.
The family of disjoint unions of complete graphs forms a $\chi$-bounded graph class $\mathcal{G}$ that has unbounded $\alpha$-chromatic number, since for every $n\in \mathbb{N}$ and every proper colouring of the graph $nK_n\in \mathcal{G}$, there exists an independent set of $n$ vertices with pairwise different colours.

We leave for future research the study of various other hyperparameterisations of the chromatic number, including the question of whether the parameter admits a weakly awesome hyperparameterisation.

\printbibliography
\end{document}